\theoremstyle{plain}
\newtheorem{prop}{Proposition}[section]
\newtheorem{thm}[prop]{Theorem}
\newtheorem{main}{Theorem}[section]
\newtheorem{lem}[prop]{Lemma}
\newtheorem{lemma}[prop]{Lemma}
\newtheorem{cor}[prop]{Corollary}
\newtheorem{rem}[prop]{Remark}
\theoremstyle{definition}
\newtheorem{dfn}[prop]{Definition}
\newtheorem{example}[prop]{Example}
\numberwithin{equation}{section}
\numberwithin{figure}{section}
\newcommand{\mycomment}[1]{}
\newcommand{\Gl}{\mathrm{GL}}
\newcommand{\Ad}{\mathrm{Ad}}
\DeclareMathOperator{\Sl}{SL}
\newcommand{\g}{\mathfrak{g}}
\newcommand{\cO}{\mathcal{O}}
\DeclareMathOperator{\Ind}{Ind}
\DeclareMathOperator{\diag}{diag}
\DeclareMathOperator{\cInd}{c-Ind}
\DeclareMathOperator{\Indd}{Ind}
\DeclareMathOperator{\Res}{Res}
\DeclareMathOperator{\Hom}{Hom}
\DeclareMathOperator{\Tr}{Tr}
\DeclareMathOperator{\val}{val}
\DeclareMathOperator{\K}{\mathcal{K}}
\DeclareMathOperator{\End}{End}
\DeclareMathOperator{\RR}{\mathcal{R}}
\DeclareMathOperator{\PP}{\mathcal{P}}
\DeclareMathOperator{\vol}{vol}
\newcommand{\cS}{\mathcal{S}}
\newcommand{\ep}{\varepsilon}
\newcommand{\triv}{\mathbf{1}}
\newcommand{\WF}{\mathrm{WF}}
\newcommand{\red}{\mathrm{red}}
\newcommand{\Z}{\mathbb{Z}}
\newcommand{\buil}{\mathscr{B}}
\newcommand{\Fqt}{\mathbb{F}_q(\!(t)\!)}
\newcommand{\Fpt}{\mathbb{F}_2(\!(t)\!)}
\newcommand{\Fqtint}{\mathbb{F}_q[\![t]\!]}
\newcommand{\maxKSL}{{\mathcal{K}'}}
\newcommand{\maxotherKSL}{{\prescript{g_1}{}{\mathcal{K}'}}}
\newcommand{\gone}{g_1}  
\newcommand{\gell}{g_\ell}
\newcommand{\sigmaell}{\prescript{g_\ell}{}{\sigma}}
\newcommand{\Jtype}{\K_{m,m'}}
\DeclareMathOperator{\intgroup}{\Gamma(\ell)}
\newcommand{\dcst}{\mathscr{S}}
\DeclareMathOperator{\car}{char}
\newcommand{\intersec}{\prescript{\gamma}{}{B'_\ell}\cap B'_\ell}  
\DeclareSymbolFont{bbold}{U}{bbold}{m}{n}
\DeclareSymbolFontAlphabet{\mathbbold}{bbold}
\begin{document}

\title[Branching Rules in residual characteristic $2$]{Branching rules for irreducible depth-zero supercuspidal representations of $\Sl(2,F),$ when $F$ has residual characteristic $2.$}
\author{Zander Karaganis}
\address{Department of Mathematics and Statistics, University of Ottawa, Ottawa, Canada K1N 6N5}
\email{zander.karaganis@mail.utoronto.ca}

\date{\today}
\author{Monica Nevins}
\address{Department of Mathematics and Statistics, University of Ottawa, Ottawa, Canada K1N 6N5}
\email{mnevins@uottawa.ca}
\thanks{The second author's research is supported NSERC Discovery Grant RGPIN-2025-05630.}

\keywords{representation theory of $p$-adic groups, dyadic case, maximal compact subgroup, local character expansion}
\subjclass[2020]{Primary: 22E50}

\begin{abstract}
    We give the decomposition into irreducible representations of the restriction to a maximal compact subgroup of any irreducible depth-zero supercuspidal representation of $\Sl(2,F)$ when $F$ is a local nonarchimedean field of residual characteristic two.  We furthermore provide explicit constructions of these irreducible components in terms of nilpotent orbits, proving a representation-theoretic analogue of the local character expansion that holds even in the wild case of characteristic two.
\end{abstract}

\maketitle

\section{Introduction}

A $p$-adic group $G$ is the group of $F$-points of a reductive algebraic group defined over a local nonarchimedean field $F$ of residual characteristic $p$.  The restriction of a smooth irreducible complex representation $\pi$ of a $p$-adic group $G$ to a maximal compact open subgroup $K$ provides a rich array of data about $\pi$ --- from its Bushnell--Kutzko types \cite{Latham2017,LathamNevins2021}, to its Gelfand--Kirillov dimension \cite{BarbaschMoy1997}, or its asymptotic behaviour near the identity \cite{Nevins2024,HenniartVigneras2024}.  The representation theory of $K$ remains an open problem and consequently, the complete set of these branching rules has only been obtained in a handful of cases of rank one groups including particularly: $\Gl(2,F)$ \cite{Cassel,Han}; $\Sl(2,F)$ \cite{Nev} and its $n$-fold covering groups \cite{Karimianpour2018} assuming $p\neq 2$; and unramified $U(1,1)$ \cite{Tiwari2025}, again with $p\neq 2$.

The case $p=2$ has often been unattainable due to the arithmetic complexity arising from the wildness of quadratic extensions of $F$.  In this paper, we 
determine the complete branching rules of all depth-zero supercuspidal representations of $G'=\Sl(2,F)$ over a local nonarchimedean field $F$ of residual characteristic $p=2$.  Unlike the case when $p$ is odd, the number of irreducible components of every depth is not constant, and in fact it grows without bound when $\car(F)=2$.  Nevertheless, we prove that these components admit an elegant description in terms of the geometry of the nilpotent elements of the Lie algebra $\mathfrak{g}'$.  Using these, we derive two kinds of representation-theoretic versions of the local character expansion, that is, simple expressions of the restriction of $\pi$ to a sufficiently small neighbourhood of the identity as a linear combination of representations arising from nilpotent orbits.

Along the way, we carefully develop a number of tools and techniques that generalize far beyond the current setting, and we expect that, as in the case of $p$ odd, the representations constructed here will exhaust the branching rules of a general irreducible representation of $G'$, up to a finite-dimensional piece.

Our results fit into the theory of the local nature of representations presented by Henniart and Vign\'eras in \cite{HenniartVigneras2024,HenniartVigneras2025} and provide an explicit sharp bound on the neighbourhood on which their local expansion holds (for  representations over $\mathbb{C}$).   Our theorems, stated for all primes $p$, specialize to the  main results of \cite{Nevins2024} when $p\neq 2$ for depth-zero supercuspidal representations, fully incorporating the arithmetic surprises that have thus far kept $p=2$ from full exploration.

We state our  main theorems as follows.  Let $G'=\Sl(2,F)$ and $\K'=\Sl(2,\RR)$ be a maximal compact open subgroup.  Since   the second conjugacy class of maximal compact open subgroups of $G'$ is represented by $\prescript{g_1}{}{\K'}$, where $g_1 = \diag(\varpi,1) \in G=\Gl(2,F)$, it suffices to establish the branching rules for restriction to $\K'$.

\begin{main}[Theorem~\ref{T:brulesSL2}]\label{T:1}
    Suppose $\pi$ is a depth-zero supercuspidal representation of $G'$. If it is of the form $\pi_0(\sigma)=\cInd_{\K'}^{G'}\sigma$ for some  cuspidal representation $\sigma$ of $\K'/\K'_+$, then 
    $$
    \Res_{\K'}\pi_0(\sigma) = \sigma \oplus \bigoplus_{\ell \in 2\mathbb{Z}_{>0}} \bigoplus_{u\in \RR^\times/(\RR^\times)^2 (1+\PP^{\ell/2})} I(\triv, u,\ell)
    $$
    where $I(\triv, u,\ell)$ is an irreducible representation of depth $\ell$, defined in \eqref{E:defIzetauell}.  Otherwise,  $\pi \cong \pi_1(\sigma') =\cInd_{\prescript{g_1}{}{\K'}}^{G'}\sigma'$ for some cuspidal representation $\sigma'$ of $\prescript{g_1}{}{\K'}/\prescript{g_1}{}{\K'_+}$ and
    $$
    \Res_{\K'}\pi_1(\sigma') = \bigoplus_{\ell \in 1+2\mathbb{Z}_{\geq 0}} \bigoplus_{u\in \RR^\times/(\RR^\times)^2 (1+\PP^{(\ell+1)/2})} I(\triv, u,\ell).
    $$
\end{main}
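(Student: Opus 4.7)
The plan is to apply Mackey's intertwining formula to each of $\Res_{\K'}\pi_0(\sigma)$ and $\Res_{\K'}\pi_1(\sigma')$, identify the resulting terms using the structure of the Bruhat--Tits tree of $\Sl(2,F)$, and match them against the representations $I(\triv,u,\ell)$ from \eqref{E:defIzetauell}. For $\pi_0(\sigma)=\cInd_{\K'}^{G'}\sigma$, Mackey gives
$$
\Res_{\K'}\pi_0(\sigma) \;=\; \bigoplus_{g\in \K'\backslash G'/\K'} \Ind_{\K'\cap \prescript{g}{}{\K'}}^{\K'}\bigl(\prescript{g}{}{\sigma}\big|_{\K'\cap \prescript{g}{}{\K'}}\bigr),
$$
with the analogous formula for $\pi_1(\sigma')$ involving double cosets $\K'\backslash G'/\prescript{g_1}{}{\K'}$. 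The Cartan decomposition, or equivalently the orbit structure of $\K'$ on vertices of each type in the tree, supplies representatives $g_\ell$ parametrized by $\ell\in 2\mathbb{Z}_{\geq 0}$ in the first case and by $\ell\in 1+2\mathbb{Z}_{\geq 0}$ in the second, explaining the outer indexing range of the theorem; the double coset $\ell=0$ yields the outlying summand $\sigma$.

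Next, I would use the fact that the intersection $\K'\cap \prescript{g_\ell}{}{\K'}$ is the stabilizer in $\K'$ of the geodesic from $v_0$ to $g_\ell\cdot v_0$ in the tree: it is a subgroup $B'_\ell$ of Iwahori type, equipped with a natural filtration by congruence subgroups. Since $\sigma$ is inflated from a cuspidal representation of the finite reductive quotient $\K'/\K'_+$, its $g_\ell$-conjugate, restricted to $B'_\ell$, factors through an appropriate abelian quotient (being trivial on a deep congruence piece); applying Clifford theory and Frobenius reciprocity, I would decompose $\Ind_{B'_\ell}^{\K'}(\prescript{g_\ell}{}{\sigma}|_{B'_\ell})$ into irreducibles parametrized by the $\K'$-orbits on the relevant pencil of characters.

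Third, I would identify each irreducible constituent with one of the $I(\triv,u,\ell)$: the orbit of depth-$\ell$ characters of $B'_\ell$ under the $\K'$-action should be indexed by square classes $u \in \RR^\times/(\RR^\times)^2(1+\PP^{\lceil\ell/2\rceil})$. This parametrization arises by dualizing characters to Lie algebra elements and invoking the geometry of the nilpotent cone of $\mathfrak{g}'$ developed in the earlier sections; the quotient by $(\RR^\times)^2$ reflects the action of the diagonal torus within $\K'$, which acts on eigenvalue-type invariants by squaring.

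The principal obstacle will be this final step. In residual characteristic two, the square-class group $\RR^\times/(\RR^\times)^2$ is infinite (uncountable when $\car(F)=2$), so the fact that only an explicit, discrete set of constituents appears at each depth relies crucially on the truncation $1+\PP^{\lceil\ell/2\rceil}$. Establishing this as the sharp truncation---that characters with differing $u$-classes give inequivalent $I(\triv,u,\ell)$, and that no further coincidences occur---requires the wild quadratic arithmetic and the fine nilpotent-orbit analysis of the earlier sections, which together replace the tools available in \cite{Nev} when $p$ is odd.
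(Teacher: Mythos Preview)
Your first two steps match the paper exactly: the Mackey decomposition with respect to the Cartan double cosets yields the components $\sigma(\ell)=\Ind_{B'_\ell}^{\K'}\prescript{g_\ell}{}{\sigma}$ indexed by $\ell$ of the correct parity (Corollary~\ref{Cor:MackeycomponentsdepthzeroSL2}), with the $\ell=0$ term giving $\sigma$.

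The gap is in your third step. The restriction $\prescript{g_\ell}{}{\sigma}|_{B'_\ell}$ does \emph{not} factor through an abelian quotient: by Lemma~\ref{L:charactersigmaell} it is an irreducible $(q-1)$-dimensional representation of $B'_\ell$, so for $q>2$ there is no pencil of characters of $B'_\ell$ to take $\K'$-orbits on. Moreover $B'_\ell$ is not normal in $\K'$, so Clifford theory does not apply in the form you invoke. One could try to repair this by restricting further to the normal subgroup $\K'_{m'}$ (with $m'=\lfloor \ell/2\rfloor+1$), where $\prescript{g_\ell}{}{\sigma}$ does split into characters, and then running Clifford theory from there; but carrying this out directly is delicate because the normalizer of such a character in $\K'$ is much larger than in the odd-$p$ case (Lemma~\ref{normalizerCharEta}), and the character does not extend to it.

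The paper circumvents this by passing through $\Gl(2)$. It first constructs $J(\omega,\ell)=\Ind_{\Gamma(\ell)}^{\K}\hat\eta_{\omega,\ell}$ from the nilpotent data and proves it is irreducible (Theorem~\ref{T:JzetaellIzetauell}); then it shows $J(\omega,\ell)\cong \Ind_{B_\ell}^{\K}\prescript{g_\ell}{}{\sigma}$ by a single Frobenius-reciprocity intertwining computation (Proposition~\ref{P:Jzetaellismackey}). Since $\sigma(\ell)=\Res_{\K'}J(\omega,\ell)$, a second application of Mackey theory, now for $\K'\leq \K$, decomposes $\sigma(\ell)$: the double cosets $\K'\backslash \K/\Gamma(\ell)$ are indexed via the determinant by $\RR^\times/(\RR^\times)^2(1+\PP^m)$, and the resulting summands are exactly the $I(\triv,u,\ell)$ (Corollary~\ref{C:Izetauell}). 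This detour is what produces the square-class indexing cleanly and explains the truncation by $1+\PP^{\lceil \ell/2\rceil}$ without needing to analyze $\K'$-orbits on characters directly.
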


We prove this by first applying Mackey theory to write $\Res_{\K'}\pi$ as a direct sum of  (reducible) \emph{Mackey components} $\sigma(\ell)$ in Section~\ref{S:4}, and we determine their intertwining in Section~\ref{Sec: Branching Rules}. After a brief interlude in Section~\ref{S:6} to derive some consequences when $q=2$, we construct in Section~\ref{S:7}, for each $\ell>0$,  representations $I(\triv, u,\ell)$ of $\K'$ of depth $\ell$, arising from (the reduction mod $\g'_{x,-\ell/2}$ of) a nilpotent $\K'$-orbit $\cO_u$ of depth $-\ell$ in the Lie algebra of $G'$.  We then prove in Theorem~\ref{T:JzetaellIzetauell} that these representations are irreducible and find their intertwining with $\sigma(\ell)$, yielding the decomposition of $\sigma(\ell)$ into irreducible $\K'$-representations.

Our next goal is to prove  that for depth-zero supercuspidal representations of $\Sl(2,F)$, the analytic character expansion (which exists when $\car(F)\neq 2$) can be expressed as a statement in the Grothendieck group of representations in an explicitly-determined neighbourhood of the identity. We propose two variants of the theorem; taken together with \cite[Theorem 1.1]{Nevins2024}, the first of these gives the following.
\begin{main}[Theorem~\ref{T:LCE}]\label{T:2}
    Let $F$ be a $p$-adic field with $\car(F) = 0$. Then to each nilpotent $\Sl(2,F)$-orbit $\cO$ in $\mathfrak{sl}(2,F)$ we may associate a representation $\tau(\cO)$ of $\K'=\Sl(2,\RR)$, and to each irreducible depth-zero  supercuspidal representation $\pi$ of $\Sl(2,F)$ we may associate a set of nilpotent orbits $\WF(\pi)$, such that
    \begin{equation*}
        \pi|_{\K_{r+}'} = n\cdot \triv + \sum_{\cO \in \WF(\pi)} \tau(\cO)|_{\K'_{r+}}
    \end{equation*}
    where $r=4\val(2)$, that is, $r=0$ when $p$ is odd.
\end{main}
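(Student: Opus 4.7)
The proof handles the case $p$ odd immediately from Theorem~1.1 of \cite{Nevins2024}, since then $r=0$ and the statement is exactly what is proved there. The remaining content is the case $p=2$, which is the focus of the argument below. The strategy is to restrict the complete decomposition of $\pi|_{\K'}$ provided by Theorem~\ref{T:brulesSL2} to the subgroup $\K'_{r+}$ and to reorganize the resulting (virtual) representation according to nilpotent orbits in $\g' = \sll(2,F)$.

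First, parametrize the nilpotent $\Sl(2,F)$-orbits in $\sll(2,F)$ as $\{0\} \cup \{\cO_u : u \in F^\times/(F^\times)^2\}$, and record the depth of each orbit with respect to the Moy--Prasad filtration at the vertex $x$ stabilized by $\K'$. The representations $I(\triv, u, \ell)$ constructed in Section~\ref{S:7} are designed so that each pair $(u, \ell)$ occurring in Theorem~\ref{T:brulesSL2} arises as a lift of a nilpotent element in $\g'_{x,-\ell}$, giving a canonical map from such pairs to nilpotent orbits $\cO \subseteq \g'$. For each nonzero nilpotent orbit $\cO$, define $\tau(\cO)$ as the sum of those irreducible $I(\triv, u, \ell)$ whose parameter maps to $\cO$, viewed as a representation of $\K'$; this is a finite sum at any fixed depth, and the goal is to show that the aggregate behaves coherently upon restriction to $\K'_{r+}$.

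The second and technical step is to analyze $I(\triv, u, \ell)|_{\K'_{r+}}$ explicitly. The key arithmetic input is that the quotient $(\RR^\times)^2(1+\PP^{\ell/2})/(\RR^\times)^2$ stabilizes for $\ell > r = 4\val(2)$ to precisely the subgroup needed to detect the coarse nilpotent orbit parameter in $F^\times/(F^\times)^2$; below this threshold, the finer quotients capture information invisible on the nilpotent orbit itself. Consequently, for depths $\ell > r$, the irreducible summands of $\pi|_{\K'}$ can be grouped unambiguously according to the nilpotent orbit to which their parameter lifts, and the assembled $\tau(\cO)|_{\K'_{r+}}$ is a well-defined representation of $\K'_{r+}$. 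Meanwhile, all contributions of depth $\ell \leq r$, together with the summand $\sigma$ in the $\pi_0(\sigma)$ case, restrict trivially to $\K'_{r+}$ and account for the $n \cdot \triv$ term, with $n$ equal to the total dimension of those low-depth pieces.

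Finally, identify $\WF(\pi)$ as the set of nilpotent orbits $\cO$ for which the corresponding bundle of $(u, \ell)$-indices appears in Theorem~\ref{T:brulesSL2}, equivalently those $\cO$ whose associated $\tau(\cO)|_{\K'_{r+}}$ occurs with positive multiplicity in $\pi|_{\K'_{r+}}$. The main obstacle is the arithmetic bookkeeping between the depth-level quotient $\RR^\times/(\RR^\times)^2(1+\PP^{\ell/2})$ and the nilpotent orbit parameter in $F^\times/(F^\times)^2$: one must verify that $r = 4\val(2)$ is both sufficient and sharp for the stabilization above, and that $\tau(\cO)|_{\K'_{r+}}$ agrees across the different possible depth representatives of $\cO$. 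Both rely essentially on the analysis of squares in dyadic local fields carried out in earlier sections and on the explicit construction of $I(\triv, u, \ell)$ from nilpotent orbit data.
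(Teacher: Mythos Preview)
Your overall strategy matches the paper's: start from the branching rules of Theorem~\ref{T:brulesSL2}, use that $\cS_{\lceil \ell/2\rceil} = \cS$ precisely when $\ell \geq 4e+1$, and absorb the low-depth discrepancy into the multiple of $\triv$. However, your handling of $\tau(\cO)$ and $n$ contains a genuine gap.

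You posit a ``canonical map'' from pairs $(u,\ell)$ occurring in Theorem~\ref{T:brulesSL2} (where $u \in \cS_{\lceil \ell/2\rceil}$) to nilpotent orbits, but no such map exists when $\ell \leq 4e$: by Theorem~\ref{T:JzetaellIzetauell} the representation $I(\triv, u, \ell)$ depends only on the class of $u$ in $\cS_{\lceil \ell/2\rceil}$, whereas by Lemma~\ref{countNilpOrbits} the degenerate $(-\ell,-\ell/2)$ coset of $X_{u\varpi^{-\ell}}$ meets \emph{several} nilpotent $G'$-orbits. So a single low-depth $I(\triv,u,\ell)$ corresponds to many orbits, not one, and your proposed definition of $\tau(\cO)$ as a representation of $\K'$ is not well-defined.

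The paper instead defines $\tau(\cO_u) = \bigoplus_{\ell} I(\triv, u, \ell)$ for a \emph{fixed} representative $u \in \cS$, summed over all $\ell$ of the correct parity (Definition~\ref{D:tauozeta}). This makes each $\tau(\cO)$ unambiguous, but the price is that for $\ell \leq 4e$ the same irreducible $I(\triv,u,\ell)$ occurs in several of the $\tau(\cO_{u'})$, so $\sum_{\cO \in \WF(\pi)} \tau(\cO)$ \emph{overcounts} relative to $\pi$ at low depths. Consequently $n$ is not the dimension of the low-depth part of $\pi$ as you assert; it is the difference in \eqref{E:formulanpi}, namely $\dim(\pi^{\K'_{4e+1}}) - \sum_{u\in\cS}\sum_{\ell\leq 4e} \dim I(\triv,u,\ell)$, which is negative. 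The identity is only asserted in the Grothendieck group, and this sign is the point.
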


This result also suggests a bound for the domain of validity for the identity \cite[Corollary 6.14]{HenniartVigneras2025} for all $F$, which expresses $\pi$ instead as an integral linear combination of the representations in an $L$-packet of $\Sl(2,F)$ of size four.

Our second locality result is valid also for fields of characteristic two, and generalizes \cite[Theorem 7.4]{Nevins2024} to this setting. A   $(-\ell, -\ell/2)$ degenerate coset is a coset $X+\g'_{x,-\ell/2}$, with $X$ of depth $-\ell$ at $x$,  meeting one or more nilpotent $G'$-orbits; here we suppose $x \in \buil(G)$ is fixed by $\K'$.  With Definition~\ref{D:tauzetauell} we attach to each such coset an infinite-dimensional representation $\tau_{\triv,u,\ell}$.

\begin{main}[Theorem~\ref{T:LCE2}]\label{T:3}
Let $\pi = \pi_i(\sigma)$ be a depth-zero supercuspidal representation of $\Sl(2,F)$ where $\car(F)\in \{0,2\}$, $p=2$ and $i\in \{0,1\}$.  Then for any $\ell>0$ such that $\ell \in i+2\mathbb{Z}$ we have 
$$
\Res_{\K'}\pi \cong \pi^{\K'_{\ell}} \oplus \bigoplus_{u\in \cS_{\lceil \ell/2 \rceil}} \tau_{\triv,u,\ell}.
$$
The number of terms $\tau_{\triv, u, \ell}$ is finite and they index the distinct $(-\ell, -\ell/2)$ degenerate cosets at $x$.  There are $2q^e+1$ summands if $\ell\geq 4e+1$ but this number grows to infinity with $\ell$ when $\car(F)=2$.
\end{main}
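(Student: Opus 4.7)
The plan is to repackage the branching rules established in Theorem~\ref{T:brulesSL2}.  That theorem already expresses $\Res_{\K'}\pi$ as an explicit countable direct sum of irreducibles---the constituents $I(\triv,u',\ell')$ of depth $\ell'\equiv i\pmod 2$ with $\ell'>0$, together with the depth-zero piece $\sigma$ when $i=0$.  It remains to recognize that truncating this sum at depth $\ell$ and regrouping the tail produces the stated finite decomposition.

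First I would isolate the part of depth strictly less than $\ell$.  Since each $I(\triv,u',\ell')$ is irreducible of depth exactly $\ell'$ and $\pi$ has depth zero, the standard correspondence between depth and filtration invariants gives
\[
\pi^{\K'_{\ell}} \;=\; \bigoplus_{\substack{0<\ell'<\ell \\ \ell'\equiv i \pmod 2}} \;\bigoplus_{u' \in \RR^\times/(\RR^\times)^2(1+\PP^{\lceil\ell'/2\rceil})} I(\triv,u',\ell'),
\]
with the additional summand $\sigma$ when $i=0$.  Next I would organize the tail---the sum over $\ell'\geq\ell$---along the natural surjections
\[
\RR^\times/(\RR^\times)^2(1+\PP^{\lceil\ell'/2\rceil}) \twoheadrightarrow \cS_{\lceil\ell/2\rceil} = \RR^\times/(\RR^\times)^2(1+\PP^{\lceil\ell/2\rceil}),
\]
which are well-defined because $\lceil\ell'/2\rceil\geq\lceil\ell/2\rceil$.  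By Definition~\ref{D:tauzetauell}, the infinite-dimensional representation $\tau_{\triv,u,\ell}$ attached to the $(-\ell,-\ell/2)$-degenerate coset labelled by $u\in\cS_{\lceil\ell/2\rceil}$ is precisely the direct sum of those $I(\triv,u',\ell')$ with $\ell'\geq \ell$, $\ell'\equiv i\pmod 2$, whose parameter $u'$ projects to $u$.  Comparing the two expressions yields the asserted isomorphism.

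For the counting statement, $|\cS_{\lceil\ell/2\rceil}| = [\RR^\times:(\RR^\times)^2(1+\PP^{\lceil\ell/2\rceil})]$.  When $\car(F)=0$ and $\lceil\ell/2\rceil\geq 2e+1$---equivalently $\ell\geq 4e+1$---Hensel's lemma gives $1+\PP^{2e+1}\subseteq(\RR^\times)^2$, so this index stabilizes at $|\RR^\times/(\RR^\times)^2|=2q^e$; together with the single $\pi^{\K'_\ell}$ summand this produces $2q^e+1$ summands.  When $\car(F)=2$, the identity $(1+x)^2=1+x^2$ forces $(1+\PP^k)^2=1+\PP^{2k}$, so $(\RR^\times)^2(1+\PP^{\lceil\ell/2\rceil})$ has unboundedly growing index in $\RR^\times$ as $\ell\to\infty$.

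The principal technical obstacle is verifying the fit with Definition~\ref{D:tauzetauell}: that the parameters $u'$ labeling the irreducible constituents $I(\triv,u',\ell')$ of depth $\ell'\geq\ell$ are, via the projections above, in natural bijection with the labels of the $(-\ell,-\ell/2)$-degenerate cosets at $x$ that index $\tau_{\triv,u,\ell}$.  This compatibility must be extracted from Section~\ref{S:7}, where both the $I(\triv,u',\ell')$ and the degenerate cosets arise from the nilpotent geometry of $\g'$; carrying out the bookkeeping so that no constituent is missed or double-counted is the main substantive step of the argument.
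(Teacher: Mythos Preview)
Your proposal is correct and follows essentially the same route as the paper's proof: both start from the explicit branching rules (Theorem~\ref{T:brulesSL2} or equivalently Corollary~\ref{C:Izetauell}), split off the depth-$<\ell$ constituents as $\pi^{\K'_\ell}$, and then partition the remaining $I(\triv,u',\ell')$ with $\ell'\geq\ell$ into the $\tau_{\triv,u,\ell}$ via the projection $\cS_{\lceil \ell'/2\rceil}\to\cS_{\lceil\ell/2\rceil}$. The ``technical obstacle'' you flag is handled in the paper by Lemma~\ref{L:Tuell} and the remark following Definition~\ref{D:tauzetauell} that the $\tau_{\triv,u,\ell}$ are pairwise disjoint for distinct $u\in\cS_{\lceil\ell/2\rceil}$, which is exactly the bookkeeping you anticipate.
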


 This theorem expresses that, independent of the characteristic of $F$,  $\Res_{\K'}\pi$ is completely determined by the local geometry of the nilpotent cone in \emph{every} neighbourhood of the identity, up to a finite-dimensional subrepresentation  $\pi^{\K'_\ell}$ whose depth controls the resolution of the decomposition.  The summands once again correspond to elements of $\WF(\pi)$, as defined in Definition~\ref{D:WFpi}.  

Along the way to these results we prove far more towards our goal of developing tools for the branching rules of more general representations in residual characteristic two, as well as insight into the key arithmetic obstructions that have made this case appear intractable until now. For one, we also address the (simpler) case of $\Gl(2,\RR)$, whose branching rules were determined by Hansen in \cite{Han}, providing new insights into her results.  For another, we contrast the methods of this paper to the solved case of $p$ is odd (\cite{Nev,Nevins2024}) throughout; when suitably interpreted, we recover the results for  $p$ odd as a special case.

Several interesting questions remain open.  Having constructed the family of irreducible representations $I(\zeta,u,\ell)$ in Section~\ref{S:7}, we anticipate that these should form the bulk of the branching rules for any irreducible representation of $\Sl(2,F)$, as was the case when $p\neq 2$ \cite[Theorem 4.1]{NevinsPatterns}, and consistent with the expectations from the local character expansion.  It is, however, challenging to explicitly detail the representations of $\Sl(2,F)$ (see \cite{Kut2,KutzkoPantoja1991}), let alone to compute their branching rules.  When $p$ is odd, the representation theory of $\K'$ is known (\cite{Shalika}); our representations $I(\zeta,u,\ell)$ are a novel contribution.

In another direction, recent work by Labesse \cite{Labesse2025} uses the endoscopic expansion of elliptic orbital integrals to produce a well-defined analogue of the germ expansion of a semisimple element when $\car(F)=2$.  This is a very promising development, since the work of Kim--Murnaghan \cite{KimMurnaghan2003} reduces the local character expansion for positive-depth supercuspidal representations to the germ expansion of a semisimple element and this was exploited in \cite{Nevins2024}.

Our paper is organized as follows.  We set out notation in Section~\ref{S:2} and  in Section~\ref{S:3} we take a deep dive into local fields of residual characteristic two, focussing particularly on squaring in $F^\times$ and in $\Sl(2,F)$.  In Section~\ref{S:4} we recall the construction of the depth-zero supercuspidal representations of $\Sl(2,F)$ and do the first step of the decomposition of $\Res_{\K'}\pi$ into representation of depth $\ell$, denoted $\sigma(\ell)$, leveraging results of Hansen \cite{Han} for $G=\Gl(2,F)$ (which were uniform across all $p$).  Our key technical result from Section~\ref{S:3}, Proposition~\ref{L:howcloseadare}, is  applied in Section~\ref{Sec: Branching Rules} to prove that the number of self-intertwining operators $\dim\End_{\K'}(\sigma(\ell))$ grows in bijection with the number of square classes modulo $\PP^{\lceil \ell/2\rceil}$ (Theorem~\ref{dimHomeSigmaEll} and Corollary~\ref{C:Sigmaell}).   In Section~\ref{S:6}, we demonstrate how to use Mackey theory  to explicitly prove that each $\End_{\K'}(\sigma(\ell))$ is abelian; in this part only we assume the residue field of $F$ is $\mathbb{F}_2$, for simplicity, and the results of Section~\ref{S:7} are independent of Section~\ref{S:6}.  

Our in-depth treatment of nilpotent orbits of $\mathfrak{sl}(2,F)$ (of which there are infinitely many, when $\car(F)=2$) in Section~\ref{nilp} sets the stage for our main theorems.  We construct irreducible representations $I(\zeta,u,\ell)$ of $\Sl(2,\RR)$ and $J(\zeta,\ell)$ of $\Gl(2,\RR)$ in Section~\ref{SS:Iuell}, and prove Theorem~\ref{T:1} in Section~\ref{SS:brules}.
In Section~\ref{S:8} we derive several applications of our results.  The first, in Section~\ref{SS:8.1}, inspired by the questions posed in \cite{HenniartVigneras2024}, is about the growth rates of $\pi^{\K'_n}$ and of the maximal irreducible subrepresentation of $\pi^{\K'_n}$, as $n\to \infty$ (Proposition~\ref{P:dn}).   In Section~\ref{SS:LCE1} we define $\WF(\pi)$ and the representations $\tau(\cO)$, and prove Theorem~\ref{T:2}; we prove the analogous result for $G=\Gl(2,F)$ in Section~\ref{SS:GL2LCE}.  We set up and prove Theorem~\ref{T:3} in Section~\ref{SS:LCE2}.
Finally, in Section~\ref{SS:8.4Q2} we detail our results for the special case of $F=\mathbb{Q}_2$ --- some numerology to serve as an enticement to  explore further.

\subsection*{Acknowledgements}  The first author would like to thank the Lisgar Collegiate Institute for providing such a stimulating high school co-op experience.  The second author likewise thanks LCI, as well as  the support of the Institut Henri Poincar\'e (UAR 839 CNRS-Sorbonne Universit\'e) (and the grant number ANR-10-LABX-59-01 in the metadata).  The second author's research is supported NSERC Discovery Grant RGPIN-2025-05630.

\section{Notation and background}\label{S:2}

\subsection{The field}
Let $F$ be a local nonarchimedean field of residual characteristic equal to two.

Suppose first that $\car(F)=0$.  Then $F$ is a $2$-adic field, that is, a finite algebraic extension of $F_0=\mathbb{Q}_2$, the field of $2$-adic numbers.  Write $\mathcal{R}$ for the ring of integers of $F$ with maximal ideal $\mathcal{P}$. Denote the residue field of $F$ by $\mathfrak{f}= \mathcal{R}/\mathcal{P}$; it is isomorphic to $\mathbb{F}_q$ where $q=2^f$ for some $f\in \mathbb{N}$. We fix a uniformizer $\varpi$ of $F$ and normalize the valuation so that $\val(\varpi)=1;$ thus $2=\iota \varpi^e$ for some unit $\iota\in\RR^\times.$
Then $e$ coincides with the ramification index of $F$ over $\mathbb{Q}_2$ and $ef=[F:F_0]$. 

If instead $\car(F)=2$, then 
$F=\Fqt$ where $q=2^f$ for some $f\in \mathbb{N}$ and $t$ is an indeterminate.  
The ring of integers is $\mathcal{R}=\Fqtint$ and the maximal ideal is $\PP=t\Fqtint$.  The residue field is $\mathfrak{f}=\mathbb{F}_q$ and we set $\varpi=t$, normalizing the valuation by $\val(t)=1$. 
As $2=0$, we set $\iota=0$ and $e=\infty$, the latter to be understood as the statement ``$m<e$" is true and ``$m\geq e$" is false.  This is distinct from the ramification degree of $F$ over any subfield.

The results in this paper hold for all such $F$.  Our primary technical focus is on the more challenging case of a $2$-adic field,  and we often provide examples in the context of $F_0=\mathbb{Q}_2$.  

\subsection{Groups and representations} If $\mathbb{G}$ is a connected reductive algebraic group defined over $F$, write $G=\mathbb{G}(F)$ to denote the group of $F$-rational points of $\mathbb{G}$. Where this can cause no confusion, we may simply say that, for example, $B$ is a Borel subgroup of $G$, to mean that $B=\mathbb{B}(F)$ where $\mathbb{B}$ is a Borel subgroup of $\mathbb{G}$ defined over $F$. 

Given a group $G$, a subgroup $H$, and an element $g\in G$, we write $\prescript{g}{}{H} = gHg^{-1};$ likewise, if $\rho$ is a representation of $H$, we write $\prescript{g}{}{\rho}$ for the representation of $\prescript{g}{}{H}$ defined by $\prescript{g}{}{\rho}(h) = \rho(g^{-1}hg).$   
 All representations $(\pi,V)$ of $G$ are assumed to be smooth and complex, that is, $V$ is a complex vector space and for all $v\in V$ there exists a compact open subgroup $K\subset G$ fixing $v$.
 
For any closed subgroup $H$ of $G$ and representation $(\sigma, W)$ of $H$, we define the compact induction $\cInd_H^G \sigma$ of $\sigma$ from $H$ to $G$ by right translation of $G$ on the space 
$$
\cInd_H^G W := \left\{f:G\to W\middle\vert \begin{array}{c}
    \forall h\in H, g\in G, f(hg)=\sigma(h)f(g), \text{$f$ is smooth}\\ \text{and compactly supported modulo $H$} 
\end{array}\right\}
$$ 
of locally constant functions with compact support in the quotient $H\backslash G.$
The restriction of such a representation to a compact open subgroup can be described using Mackey theory.  The following statement is from \cite{Kutzko1977}.

\begin{prop}\label{P:Mackey}
    Let $G$ be the $F$-points of a connected reductive algebraic group. Suppose that $H$ and $L$ are subgroups of $G$ such that $H$ is   compact-mod-centre and $L$ is either closed, or compact open. If $\varrho$ is a representation of $H$ such that $\cInd_{H}^G\varrho$ is admissible, then $$\Res_L \cInd_{H}^G \varrho \cong \bigoplus_{g\in L\backslash G/H} \Indd_{{}^g H \cap L}^L {}^g\varrho.$$ 
\end{prop}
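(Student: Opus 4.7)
The plan is the standard double-coset Mackey argument, realized explicitly on the function-space model of $\cInd_H^G\varrho$ given in the setup.  After the reindexing $g\mapsto g^{-1}$, the sum over $L\backslash G/H$ corresponds to the partition $G=\bigsqcup_{g} Hg^{-1}L$ into left $H$-invariant, right $L$-invariant double cosets.  For each representative $g$, I would define $V_g\subseteq \cInd_H^G\varrho$ to be the subspace of functions supported on $Hg^{-1}L$.  Since $\mathrm{supp}(\ell\cdot f)=\mathrm{supp}(f)\cdot\ell^{-1}$ and $Hg^{-1}L\cdot \ell^{-1}=Hg^{-1}L$ for all $\ell\in L$, each $V_g$ is an $L$-subrepresentation of $\Res_L\cInd_H^G\varrho$.

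The first main step is to establish $\Res_L\cInd_H^G\varrho=\bigoplus_g V_g$.  The natural candidate is the decomposition $f=\sum_g \chi_{Hg^{-1}L}f$.  When $L$ is compact open, the double cosets $Hg^{-1}L$ are themselves open in $G$, each characteristic function $\chi_{Hg^{-1}L}$ is locally constant, and the image of $\mathrm{supp}(f)$ in $H\backslash G$ is compact and so meets only finitely many of these disjoint open sets; the sum is then a finite sum of smooth functions.  This step is the main technical obstacle when $L$ is only closed, not open: the characteristic functions need no longer be smooth, and one leans on the admissibility hypothesis on $\cInd_H^G\varrho$ (which restricts how $f$ can be supported and how $L$ sees these supports) to guarantee that the decomposition converges and is locally finite as required.

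The second main step is, for each $g$, to construct an $L$-equivariant isomorphism $V_g\xrightarrow{\sim}\Indd_{{}^gH\cap L}^L{}^g\varrho$.  I would send $f\in V_g$ to $\phi_f\colon L\to W$ defined by $\phi_f(\ell)=f(g^{-1}\ell)$.  To check the transformation law, take $k\in {}^gH\cap L$ and write $k=ghg^{-1}$ for $h\in H$; then $g^{-1}k\ell=hg^{-1}\ell$, so the left $H$-equivariance of $f$ yields
$$\phi_f(k\ell)=f(hg^{-1}\ell)=\varrho(h)f(g^{-1}\ell)=\varrho(g^{-1}kg)\phi_f(\ell)={}^g\varrho(k)\phi_f(\ell),$$
exactly the $\Indd_{{}^gH\cap L}^L{}^g\varrho$ transformation law.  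Right $L$-equivariance $\phi_{\ell_0\cdot f}=\ell_0\cdot\phi_f$ is immediate from the definition, and the inverse map sends $\phi$ to the function $hg^{-1}\ell\mapsto \varrho(h)\phi(\ell)$, extended by zero outside $Hg^{-1}L$; its well-definedness is forced by the $({}^gH\cap L)$-equivariance of $\phi$.  Assembling the isomorphism $\bigoplus_g V_g\cong\bigoplus_g \Indd_{{}^gH\cap L}^L{}^g\varrho$ completes the argument.
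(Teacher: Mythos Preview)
The paper does not prove this proposition; it simply attributes the statement to Kutzko \cite{Kutzko1977} and moves on. So there is no proof in the paper to compare your argument against.

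Your outline is the standard one and is essentially complete in the case where $L$ is compact open: the double cosets $Hg^{-1}L$ are then open in $G$, their images in $H\backslash G$ form an open partition, and compactness of $\overline{\mathrm{supp}(f)}$ in $H\backslash G$ gives the finite decomposition $f=\sum_g \chi_{Hg^{-1}L}f$ with each piece smooth. The isomorphism $V_g\cong \Indd_{{}^gH\cap L}^L{}^g\varrho$ is written out correctly.

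The genuine gap is exactly where you flag it: when $L$ is merely closed, you do not actually carry out the argument, only gesture at admissibility. The issue is real---$\chi_{Hg^{-1}L}f$ need not be smooth, and the sum need not be locally finite---and ``leaning on admissibility'' is not a proof. In Kutzko's treatment the admissibility of $\cInd_H^G\varrho$ is used to show that each $f$ is fixed by some compact open $K$, whence $\mathrm{supp}(f)$ is a finite union of sets $HgK$; one then argues via the finitely many $(H,K)$-double cosets rather than the possibly badly-behaved $(H,L)$-double cosets. If you want a self-contained proof in the closed-$L$ case, you should either reproduce that argument or restrict your claim to the compact open case (which is the only case the paper actually uses).
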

We call the summands --- which are not necessarily irreducible --- the \emph{Mackey components} of the restriction.

We also use Clifford theory (for finite groups, since every smooth irreducible representation of a compact open subgroup factors through a finite quotient).        Let $K$ be a compact open subgroup of $G$ and $N$ a normal subgroup of $K$ of finite index.  For any irreducible representation $\lambda$ of $N$ let $N_K(\lambda)=\{k\in K: \prescript{k}{}{\lambda}\cong \lambda\}$. 

\begin{thm}\label{T:Clifford}
    In the setting above, if  $\pi$ is an irreducible smooth representation of $K$ and  $\Hom_N(\lambda,\Res_N\pi)\neq 0$, then there exists an irreducible representation $\sigma$ of $N_K(\lambda)$ such that $\pi \cong \Ind_{N_K(\lambda)}^K\sigma$,  and
the restriction of $\pi$ to $N$ is a direct sum (possibly with multiplicity) of $K$-conjugates of $\lambda$.
    In particular, all irreducible representations occurring in $\Res_N \pi$ are of equal degree.
\end{thm}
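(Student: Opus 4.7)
My plan is the standard Clifford-theoretic argument, preceded by a routine reduction to the finite-group case.

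First, I would reduce to finite groups. Since $\pi$ and $\lambda$ are smooth irreducible representations of compact groups, they are finite-dimensional with open kernels of finite index; choosing a sufficiently small finite-index open normal subgroup $M\triangleleft K$ contained in $N$, in $\ker\pi$, and in every $K$-conjugate of $\ker\lambda$, the entire statement descends to the finite quotient $(K/M, N/M)$. This is the step requiring the only real care, since one must ensure that $N_K(\lambda)$ and the isotypic decomposition below are computed with respect to the same finite quotient; everything thereafter is formal.

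Next, I would use normality of $N$ in $K$ to produce a $K$-action on the set $\widehat{N}$ of equivalence classes of irreducible $N$-representations by $k\cdot \mu = {}^k\mu$; the stabilizer of $\lambda$ is by definition $N_K(\lambda)$. Decomposing $\Res_N \pi$ into its $N$-isotypic components $V=\bigoplus_{\mu} V_\mu$, a direct verification gives $\pi(k)V_\mu = V_{{}^k\mu}$, so $K$ permutes the nonzero components. By irreducibility of $\pi$ this permutation action on nonzero components is transitive, and this already yields the final claim of the theorem: every irreducible $N$-constituent of $\Res_N\pi$ is a $K$-conjugate of $\lambda$ and hence, by conjugation-invariance of degree, has the same dimension.

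Finally, setting $W=V_\lambda$, the subspace $W$ is $N_K(\lambda)$-stable and so carries a representation $\sigma$ of $N_K(\lambda)$. Transitivity of the $K$-action on nonzero isotypic components yields $V=\bigoplus_{k \in K/N_K(\lambda)}\pi(k)W$, which is precisely the condition for $V\cong \Ind_{N_K(\lambda)}^K \sigma$ via Frobenius reciprocity. Irreducibility of $\sigma$ follows formally: any proper nonzero $N_K(\lambda)$-subrepresentation of $W$ would induce to a proper nonzero $K$-subrepresentation of $\pi$, contradicting the irreducibility of $\pi$.
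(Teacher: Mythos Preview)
The paper does not supply a proof of this theorem: it is stated as background (classical Clifford theory), with the preceding sentence noting that the smooth case reduces to finite groups. Your argument is the standard proof and is correct; the only quibble is that the identification $V\cong \Ind_{N_K(\lambda)}^K\sigma$ comes directly from the decomposition $V=\bigoplus_{k\in K/N_K(\lambda)}\pi(k)W$ (the characterization of an induced module), not really from Frobenius reciprocity.
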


\subsection{Specific notation}
From now on, we set $G=\Gl(2,F)$ and  $\K=\Gl(2,\mathcal{R})$. 
Write $Z=Z(G)$ to denote the center of $G$ and $B$ for the \emph{lower} triangular subgroup.  Write $\K_\ell$ for the $\ell$th congruence subgroup of $\K$, for any $\ell> 0$; then $\K_+=\K_1$.  Write $B_\ell := (B\cap \K)\K_\ell$ for the matrices of $\K$ that are lower triangular modulo $\PP^\ell$.  

Our main focus is the subgroup  $G' = \Sl(2,F)$. In general, if $H$ is a subgroup of $G$, we will use $H'$ to denote $H\cap G'.$  Thus $Z'=\{\pm I\}$, $\K'=\Sl(2,\mathcal{R})$,   and $B'_\ell = (B\cap \K')\K'_\ell$.   We write $\mathfrak{g}$ for the Lie algebra of $G$ and $\mathfrak{g}'$ for that of $G'.$ 

Write $\lceil r \rceil = \min\{n\in \mathbb{Z}\mid n\geq r\}$ and $\lfloor r \rfloor = \max\{n\in \mathbb{Z}\mid n\leq r\}$.  Then $\lceil r +\rceil := \min\{n\in \mathbb{Z}\mid n> r\} = \lfloor r\rfloor+1$.  For a real number $r$, define $\mathcal{P}^r := \{ x\in F\mid \val(x) \geq r\}=\PP^{\lceil r \rceil}$. This gives a filtration of $F$ of the form $\dots \supset \mathcal{P}^{-2} \supset \mathcal{P}^{-1} \supset \mathcal{R} \supset \mathcal{P} \supset \mathcal{P}^2 \supset \cdots .$ The group of units $\mathcal{R}^\times$ of $\mathcal{R}$ similarly admits a filtration by subgroups $1+\PP^m$ for $m\in \mathbb{Z}_{>0}$. 
 
Given sets $S_i,$ we may simply write 
$$
\begin{bmatrix}
    S_1 & S_2\\ S_3 & S_4
\end{bmatrix}:= \left\{\begin{bmatrix}
    s_1 & s_2\\ s_3 & s_4
\end{bmatrix} \middle\vert s_i \in S_i \right\}
$$
to represent the corresponding subgroups of $G$ or $G'$ given by intersection. When the $S_i$ are $\RR$-modules this notation can represent the $\RR$-points of a group scheme. 

Let $\diag(a,b)$ denote the diagonal matrix with entries $a,b$ from $F$.  Some other recurring matrix forms are 
$$
w=\begin{bmatrix}
    0& 1\\ -1&0
\end{bmatrix}, \quad 
g_\ell = \begin{bmatrix} \varpi^\ell &0\\ 0&1\end{bmatrix}, \quad
g(k,\alpha)=
\begin{bmatrix}1&\alpha \varpi^k \\ 0&1\end{bmatrix}, \quad \text{and} \quad X_u=\begin{bmatrix}0& 0\\ u&0\end{bmatrix}
$$ 
with $\alpha,u\in F$ and $k,\ell\in \mathbb{Z}$, representing, respectively:  a Weyl element of $G'$; a double coset of $\mathcal{K}\backslash G/Z\mathcal{K}$; a certain coset of $G'/B'$; and a nilpotent element of the Lie algebra.  We fix throughout an additive character $\psi$ of $F$ with conductor $\mathcal{P}$.

 If $\buil(G)$ denotes the Bruhat--Tits building of $G$, then we have $\buil(G')=\buil^{\red}(G)$, its reduced building.   
 Write $G_x$ for the stabilizer in $G$ of $x\in \buil(G)$ and  $G_{x,r}$, for $r\geq 0$, for the Moy--Prasad filtration subgroups of $G$ at $x$, with the convention that $G_{x,r+}:=\bigcup_{s>r}G_{x,s}$.  We define  $\RR$-subalgebra filtrations $\mathfrak{g}_{x,r}$ of $\g$ similarly, indexed this time by $r\in \mathbb{R}$.  For any $x\in \buil(G)$ and any $0<r/2\leq s<r$, the Moy--Prasad isomorphism
\begin{equation}\label{E:MP}
G_{x,s}/G_{x,r} \to \mathfrak{g}_{x,s}/\mathfrak{g}_{x,r}
\end{equation}
is given by the map $k\mapsto k-I$, independently of $x$ (or of  $p$).   This map factors through to the isomorphism $G'_{x,s}/G'_{x,r}\to \mathfrak{g}'_{x,s}/\mathfrak{g}'_{x,r}$, sending $k$ to the unique coset meeting $k-I + \mathfrak{g}_{x,r}$.

 We designate $x_0\in \buil(G')$ to be the vertex for which $G'_{x_0}=G'_{x_0,0}=\K'$ and denote also by $x_0$ any preimage in $\buil(G)$, so that $G_{x_0}=G_{x_0,0}=\K$.  There are two $G'$-conjugacy classes of vertices in $\buil(G')$ but they are $G$-conjugate: setting $g_1=\diag(\varpi,1)\in G$ as above and $x_1=g_1\cdot x_0\in \buil(G')$, we have $G'_{x_1} = \prescript{g_1}{}{\K'} = \left[\begin{smallmatrix} \RR&\PP\\ \PP^{-1}&\RR\end{smallmatrix}\right]$.  This conjugation preserves the level of filtrations.  Note that we simply have $G_{x_0,r}=\K_{\lceil r \rceil}$, the $\lceil r\rceil$th congruence subgroup, for each $r > 0$.

Given a representation $(\pi,V)$ of $G,$ we define $V^{G_{x,d+}} = \{v\in V\mid \pi(k)v=v, \,\, \forall k\in {G}_{x,d+}\}$. Then in \cite{MoyPrasad1996} 
Moy and Prasad defined the depth of $\pi$ as $$d=d(\pi):= \min\{d\in\mathbb{R}_{\geq 0}\mid \text{ there exists } x\in\mathscr{B}(G) \text{ such that } V^{G_{x,d+}} \neq \{0\}\}.$$
Similarly, the depth of a representation $(\sigma,V)$ of $\K$ is the least integer $d\geq 0$ such that $V^{\K_{d+}}\neq \{0\}.$

\section{On squaring in local fields of residual characteristic two}\label{S:3}

\subsection{Square classes}

The group of square classes of $F$ is given by $F^\times/F^{\times 2} \cong \mathcal{R}^\times/(\mathcal{R}^\times)^2\times \Z/2\Z$, where the second factor is the parity of the valuation.  

\begin{lem}\label{Lemma:Cassel}
    Let $\mathscr{F}$ be a set of representatives for $\mathfrak{f}$ in $\mathcal{R}$.  Choose $\alpha \in \mathfrak{f}^\times$ that is not in the image of the map $x\mapsto x^2+x$ and let $\aleph\in \mathscr{F}$ be a lift of $\alpha$.   If $\car(F)=0$ then the order of $\mathcal{R}^\times/(\mathcal{R}^\times)^2$ is $2q^e$ and a set of representatives is  
    $$
    \cS=\{1 +a_1\varpi + a_2\varpi^{3} + \cdots + a_e\varpi^{2e-1} + 4\gamma \mid \gamma \in \{0,\aleph\}\; \text{and}\; \forall i, a_i \in \mathscr{F} \},
    $$
    whereas if $\car(F)=2$ then $\mathcal{R}^\times/(\mathcal{R}^\times)^2$ is infinite and a set of representatives is
    $$
    \cS = \{1 + \sum_{i\in \mathbb{Z}_{\geq 0}}a_i\varpi^{2i+1}  \mid \forall i, a_i \in \mathscr{F}\}.
    $$     
\end{lem}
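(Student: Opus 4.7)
The plan is to treat the two cases separately; the characteristic zero case will be the technically involved one. In both settings I would first reduce from $\RR^\times$ to $1 + \PP$: since Frobenius $x \mapsto x^2$ is a bijection on $\mathfrak{f}^\times$, any $u \in \RR^\times$ differs by the square of a Teichm\"uller lift from an element of $1 + \PP$, and since $\mathscr{F}^\times$ consists of squares it suffices to describe $1 + \PP$ modulo $(\RR^\times)^2$.

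For $\car(F) = 2$ the calculation is short and direct. Here $2 = 0$, so Frobenius gives $(1 + x)^2 = 1 + x^2$, and expanding $x = \sum_{i \geq 1} c_i \varpi^i \in \PP$ yields $x^2 = \sum_{i \geq 1} c_i^2 \varpi^{2i}$, supported only on even positive powers of $\varpi$. Since Frobenius is also surjective on $\mathfrak{f}$, every such even-supported series arises, so $(1 + \PP)^2$ consists precisely of elements $1 + y$ with $y$ supported on even positive powers. The claimed $\cS$ is therefore a canonical (and manifestly infinite) set of square-class representatives.

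For $\car(F) = 0$ the key identity is
$$
(1 + b\varpi^m)^2 = 1 + b^2 \varpi^{2m} + \iota b \varpi^{m+e},
$$
using $2 = \iota \varpi^e$. I would run an induction on $j = 1, 2, \ldots, e$ with the hypothesis that every $u \in 1 + \PP$ lies, modulo $(\RR^\times)^2$, in $v_j \cdot (1 + \PP^{2j})$, where $v_j = 1 + a_1 \varpi + a_2 \varpi^3 + \cdots + a_j \varpi^{2j-1}$ with $a_i \in \mathscr{F}$. At each inductive step I would first kill the offending $\varpi^{2j}$ coefficient $c \in \RR$: choose $b \in \mathscr{F}$ with $\bar b^2 \equiv \bar c \pmod{\PP}$ (by surjectivity of Frobenius on $\mathfrak{f}$) and multiply by $(1 + b\varpi^j)^{-2}$; the crucial point is that for $j < e$ the cross term $\iota b \varpi^{j+e}$ lies in $\PP^{2j+1}$, so this substitution changes only the targeted coefficient modulo $\PP^{2j+1}$. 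The next odd-index coefficient then gets absorbed into $a_{j+1} \in \mathscr{F}$.

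The main obstacle will be the final stage, at depth $\varpi^{2e}$, where the cross term $\iota b \varpi^{e+e}$ has the same valuation as $b^2 \varpi^{2e}$. The relevant residue map becomes $\bar b \mapsto \bar b^2 + \bar\iota\, \bar b$; substituting $\bar b = \bar\iota \bar c$ reveals its image to be $\bar\iota^2 \wp(\mathfrak{f})$ for $\wp(c) = c^2 + c$, and by the hypothesis on $\alpha$, $\bar\iota^2 \bar\alpha$ represents the unique nontrivial coset in $\mathfrak{f}/\bar\iota^2 \wp(\mathfrak{f})$. The $\varpi^{2e}$ coefficient therefore reduces modulo squares and $\PP^{2e+1}$ either to $0$ or to $\iota^2 \aleph \varpi^{2e} = 4\aleph$, producing the $4\gamma$ term. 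Finally, I would apply Hensel's lemma to $f(X) = X^2 - u$ for $u \in 1 + \PP^{2e+1}$: since $|f(1)| \leq q^{-(2e+1)} < q^{-2e} = |f'(1)|^2$, Newton iteration converges and $1 + \PP^{2e+1} \subset (\RR^\times)^2$, closing the induction. Counting $|\cS| = q^e \cdot 2 = 2q^e$ and comparing to the classical formula $|\RR^\times/(\RR^\times)^2| = 2q^e$ for $2$-adic fields then upgrades the established surjection $\cS \to \RR^\times/(\RR^\times)^2$ to a bijection.
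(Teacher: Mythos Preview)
Your argument is correct. In fact, the paper does not really prove the characteristic-zero case at all: it simply cites Casselman's notes \cite{Cas2023} for that part, and for $\car(F)=2$ records only the one-line observation that $\RR^{\times 2}$ consists exactly of the power series supported on even powers of $\varpi$. Your proposal therefore supplies a self-contained argument where the paper defers to an external reference; the inductive scheme you describe (kill the $\varpi^{2j}$ coefficient with $(1+b\varpi^j)^{-2}$ for $j<e$, handle the Artin--Schreier obstruction at $j=e$, then invoke Hensel on $1+\PP^{2e+1}$ and count) is the standard route and is presumably what lies behind the cited reference. One small point worth tightening: in the $\car(F)=2$ paragraph, the step from ``$(1+\PP)^2$ consists of even-supported series'' to ``$\cS$ is a transversal'' is not automatic, since square classes are multiplicative rather than additive cosets; you need a brief recursion (the even-index coefficients of $u$ determine the square factor, after which the odd-index coefficients determine the element of $\cS$) to close the gap.
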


If $\car(F)=0$, then every element of $1+\PP^{2e+1}=1+4\PP$ is a square and the above lemma is proven in \cite{Cas2023}. If $\car(F)=2$, then we have directly that
 $\RR^{\times 2} = \{\sum_{j\geq 0} a_j\varpi^{2j}\mid a_j\in \mathscr{F}, a_0\neq 0\}$, whence the result.  
 
\begin{example}
    Suppose $F=F_0$.  If we choose $\aleph=1$, $\varpi=2$, $\epsilon=1+\varpi^2=5$ and $\eta=-1\in 1+\varpi+\varpi^2+\mathcal{P}^3$, then the $4q^e=8$ square classes of $F_0^\times$ are
\begin{equation}\label{squareclassesQ2}
F_0^\times/(F_0^\times)^2 = \{1, \epsilon, \eta,  \epsilon\eta, \varpi, \epsilon\varpi, \eta\varpi\}=\{\pm 1, \pm 2, \pm 5, \pm 10\}.
\end{equation}
As $\car(F_0)=0$, the nontrivial classes parametrize the  seven distinct quadratic extensions of $F_0$, with $E=F_0[\alpha]$ with $\alpha \in (\cS\smallsetminus \{1\}) \cup \varpi\cS$.  Note that $\alpha=\epsilon=5\equiv -3 \mod 8$ generates the unramified extension (which is characterized as containing a primitive cube root of unity) but the remaining extensions are (wildly) ramified.
\end{example}

In contrast, when $p$ is odd, $|\RR^\times/(\RR^\times)^2|=2$ and its representatives are distinct mod $\PP$.

\subsection{On products and squaring}
We will require certain matrix calculations over $2$-adic fields to prove the main theorem of Section~\ref{Sec: Branching Rules}.
We begin with a simple result.

\begin{lemma}\label{L:squaring}
    Let $\delta\in \mathbb{Z}_{>0}$ and suppose $a\in \RR^\times$ satisfies $a^2\in 1+\PP^\delta$.  Then $a\in \pm 1 + \PP^{\max\{\delta-e,\lceil \delta/2\rceil\}}$.
\end{lemma}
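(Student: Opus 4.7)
The plan is to analyse the factorisation $a^2-1 = (a-1)(a+1)$ and track the valuations $v := \val(a-1)$ and $w := \val(a+1)$. The hypothesis $a^2 \in 1+\PP^\delta$ says precisely that $v+w \geq \delta$. The goal is to produce $M := \max\{\delta-e, \lceil \delta/2\rceil\}$ as a lower bound for $\max\{v,w\}$: if $v \geq M$ we get $a \in 1+\PP^M$, and if $w \geq M$ we get $a \in -1+\PP^M$, so in either case $a \in \pm 1 + \PP^M$ as required.

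The central identity is $(a+1)-(a-1) = 2$, which has valuation exactly $e$ (in $\car(F)=0$; in $\car(F)=2$ we follow the paper's convention $e=\infty$). First I would use this to prove $\min\{v,w\} \leq e$: otherwise both $a-1$ and $a+1$ lie in $\PP^{e+1}$, forcing their difference $2$ into $\PP^{e+1}$, contradicting $\val(2)=e$. Combined with $v+w\geq \delta$ this immediately yields $\max\{v,w\}\geq \delta-e$. The second half of the lower bound, $\max\{v,w\}\geq \lceil\delta/2\rceil$, is automatic since $\max\{v,w\} \geq (v+w)/2 \geq \delta/2$ and $v,w$ are integers.

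Putting the two bounds together gives $\max\{v,w\}\geq M$, and the trichotomy above finishes the proof. The characteristic two case falls out as a degenerate special case: one has $-1 = 1$, $a+1 = a-1$, so $v=w$, the $\delta-e$ term drops out (treated as $-\infty$), and $2v\geq \delta$ gives $v\geq \lceil \delta/2\rceil = M$ directly.

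I do not anticipate a real obstacle here; the argument is elementary valuation bookkeeping. The only point requiring a bit of care is the uniform treatment of $e=\infty$ in the wild equicharacteristic case, which the paper's conventions around ``$m<e$'' and ``$m\geq e$'' already handle cleanly, so the same statement and proof cover both characteristics.
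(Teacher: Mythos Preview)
Your proof is correct and takes a genuinely different route from the paper's. The paper writes $a = 1 + z\varpi^k$ and expands $a^2 = 1 + \iota z\varpi^{k+e} + z^2\varpi^{2k}$, then cases on whether $k \neq e$ (where the two correction terms have distinct valuations, giving $k+e \geq \delta$ and $2k \geq \delta$ directly) or $k = e$ (where the terms can cancel, forcing $z \equiv \iota \bmod \PP$, hence $a \equiv -1 \bmod \PP^{e+1}$, after which one reapplies the argument to $-a$). Your factorisation $a^2-1 = (a-1)(a+1)$ together with the identity $(a+1)-(a-1)=2$ handles both signs symmetrically and avoids both the case split on $k=e$ and the recursive reduction to $-a$; it is cleaner as a standalone argument. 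The paper's expansion approach, on the other hand, directly prefigures the coefficient-tracking computations in the immediately following Proposition~\ref{L:howcloseadare}, so it fits more organically with the rest of the section.
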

\begin{proof}
Suppose $a\neq 1$.  Since $a^2\in 1+\PP$ we may write $a=1+z\varpi^k$ for some $k\geq 1$
 and $z\in \RR^\times$, so that  
\begin{equation}\label{E:squaringwithk}
a^2= 1+2z\varpi^k+z^2\varpi^{2k}=1+\iota z\varpi^{k+e} + z^2\varpi^{2k}.
\end{equation}
If $\car(F)=2$ then $\iota=0$ so this lies in $1+\PP^\delta$ if and only if $k\geq \delta/2$; since $e=\infty$ and $1=-1$, this yields the statement for this case.  Suppose now $\car(F)=0$.  If $k\neq e$ then from \eqref{E:squaringwithk} we conclude that $k+e\geq \delta$ and $2k\geq \delta$, as required.  If $k=e$, then \eqref{E:squaringwithk} simplifies to
$$
a^2= 1+ (z\iota + z^2)\varpi^{2e}.
$$
If $2e\geq \delta$ there is nothing to show.  If $2e<\delta$ then we must have $z\equiv \iota\mod \PP$, whence $a\equiv -1 \mod \PP^{e+1}$, or $-a\in 1+\PP^{e+1}$. Since its square lies in $1+\PP^\delta$, we infer from the preceding that $-a\in 1+\PP^{\max\{\delta-e,\delta/2\}}$.
\end{proof}

What we require in further calculations is more subtle.  Let us present the easier case of $\car(F)=2$ first.

\begin{lem}\label{L:howcloseadareCHAR2}
Suppose $F$ is of characteristic $2$. Let $\delta \in \mathbb{Z}_{>0}$.  
Suppose $a,d \in 1+\PP$ satisfy $a\equiv d \mod \PP^\delta$ and $ad \equiv 1 \mod \PP^{\delta+1}$.  Then we may write
\begin{equation}\label{E:char2adexpression}
a = 1 + \sum_{i\geq\delta/2}a_i\varpi^i, \quad \text{and}\quad
d = a + \sum_{i\geq \delta} (d_i-a_i) \varpi^i
\end{equation}
for some $a_i, d_i\in \RR$ and moreover
\begin{itemize}
    \item if $\delta$ is even, then  $d_{\delta} - a_\delta\in a_{\delta/2}^2 + \PP$;
    \item if $\delta$ is odd, then
    $d_\delta - a_\delta \in \PP$.
    \end{itemize}
\end{lem}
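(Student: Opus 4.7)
The plan is to translate both hypotheses into a single congruence expressing $d - a$ in terms of $a - 1$, and then read off the claimed statements one power of $\varpi$ at a time. The essential simplification here is the Frobenius identity $(u+v)^2 = u^2 + v^2$ together with $2 = 0$, both of which eliminate the cross-term cancellations that make the analogous mixed-characteristic calculation more delicate.

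First I would write $a = 1 + x$ and $d = 1 + y$ with $x, y \in \PP$. Since $\car(F) = 2$, one has $a - d = x - y = x + y$, so the hypothesis $a \equiv d \pmod{\PP^\delta}$ says that $z := y + x$ lies in $\PP^\delta$. Expanding $ad - 1 = x + y + xy$ and substituting $y = x + z$ yields
$$
ad - 1 \;=\; z + x^2 + xz.
$$
As $x \in \PP$ and $z \in \PP^\delta$, the cross term $xz$ lies in $\PP^{\delta+1}$, so the second hypothesis $ad \equiv 1 \pmod{\PP^{\delta+1}}$ collapses to the key congruence
$$
d - a \;=\; z \;\equiv\; (a-1)^2 \pmod{\PP^{\delta+1}}.
$$

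Next I would use this congruence to fix the depth of $x = a - 1$. Since $z \in \PP^\delta$, we obtain $x^2 \in \PP^\delta$, and because $\val(x^2) = 2\val(x)$ in characteristic two this forces $\val(x) \geq \lceil \delta/2 \rceil$. Thus $a$ admits an expansion $a = 1 + \sum_{i \geq \delta/2} a_i \varpi^i$ with integer indices (interpreting $i \geq \delta/2$ as $i \geq \lceil \delta/2 \rceil$), giving the first assertion. Using that in characteristic two $x^2 = \sum a_i^2 \varpi^{2i}$, the two remaining cases follow by matching the coefficient of $\varpi^\delta$ in the key congruence. If $\delta$ is even, the only term of $x^2$ of valuation exactly $\delta$ is $a_{\delta/2}^2 \varpi^\delta$ and all others lie in $\PP^{\delta+2}$, so $d_\delta - a_\delta \equiv a_{\delta/2}^2 \pmod{\PP}$. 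If $\delta$ is odd, every term of $x^2$ has valuation at least $\delta + 1$, so $z \in \PP^{\delta+1}$, forcing $d_\delta - a_\delta \in \PP$.

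I do not expect a genuine obstacle: the characteristic-two simplifications keep the algebra short. The only point requiring care is that the coefficients $a_i, d_i$ are not uniquely determined as elements of $\RR$ but only modulo $\PP$, which is consistent with how the conclusion is phrased and requires no further adjustment.
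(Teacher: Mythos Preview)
Your proof is correct and follows essentially the same approach as the paper: both reduce to computing the coefficient of $\varpi^\delta$ in $ad-1$ and use that in characteristic two the square of $a-1$ contributes only even-valuation terms. The paper first invokes Lemma~\ref{L:squaring} (from $a^2\equiv 1\bmod \PP^\delta$) to place $a\in 1+\PP^{\lceil\delta/2\rceil}$ and then expands the product $ad$ directly, whereas you derive the clean identity $d-a\equiv (a-1)^2\pmod{\PP^{\delta+1}}$ first and extract the valuation bound from it; this is a slightly more streamlined organization of the same computation.
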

\begin{proof}
 The hypothesis implies $a^2 \in 1+\PP^{\delta}$, and we deduce from  Lemma~\ref{L:squaring} that  $a,d\in 1+\PP^{\lceil \delta/2\rceil}$.  Therefore we may write  $a$ and $d$ as in \eqref{E:char2adexpression}. The product of $a$ and $d$ thus has only three pairs of terms that could contribute to the coefficient of $\varpi^\delta$ and we infer
$$
ad \equiv 1+ a^2_{\lceil\delta/2\rceil}\varpi^{2\lceil\delta/2\rceil}+
(a_{\delta} + d_{\delta})\varpi^\delta\mod \PP^{\delta + 1}.
$$
When $\delta$ is even, this implies that $d_\delta + a_\delta \in a_{\delta/2}^2 + \PP$. When $\delta$ is odd, $2\lceil \delta/2\rceil>\delta$ so that instead $a_\delta + d_\delta \in \PP$.
\end{proof}

We now derive the analogous result for  $2$-adic fields. Note that our statement specializes to Lemma~\ref{L:howcloseadareCHAR2} when we set $e=\infty$, so it is valid for all $F$.

\begin{prop}\label{L:howcloseadare}
Let $\delta \in \mathbb{Z}_{>0}$.   Suppose $a,d \in 1+\PP$ satisfy $a\equiv d \mod \PP^\delta$ and $ad \equiv 1 \mod \PP^{\delta+1}$.  Set $s=\max\{\delta-e,\lceil \delta/2\rceil\}$.  Then, replacing the pair $(a,d)$ by $(-a,-d)$ as necessary, we may write
\begin{equation}\label{E:char0adexpression}
a = 1 + \sum_{i\geq s}a_i\varpi^i, \quad \text{and}\quad
d = a + \sum_{i\geq \delta} (d_i-a_i) \varpi^i
\end{equation}
for some $a_i, d_i\in \RR$.  Moreover,
\begin{itemize}
    \item if $\delta\geq 2e+1$, then $a \in \pm 1 \mod \PP^{\delta-e}$ and $d_\delta - a_\delta \in \iota a_{\delta-e} +\PP$;
    
    \item if $\delta = 2e$, then $a\in 1+\PP^e$ and  $d_{2e}- a_{2e} \in \iota a_{e} + a_{e}^2 +\PP$;
    \item if $\delta < 2e$ is even, then $a\in 1+\PP^{\delta/2}$ and  $d_{\delta} - a_\delta\in a_{\delta/2}^2 + \PP$;
    \item if $\delta < 2e$ is odd, then $a\in 1+\PP^{\lceil \delta/2\rceil}$ and 
    $d_\delta - a_\delta \in \PP$.
    \end{itemize}
    In particular, if $q=2$ then in the case $\delta=2e$ we have simply $d_\delta - a_\delta \in  \PP$.
\end{prop}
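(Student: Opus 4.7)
The plan is to bootstrap off Lemma~\ref{L:squaring}, reduce to $a \in 1+\PP^s$ by a global sign choice, and then extract the coefficient of $\varpi^\delta$ from an expansion of $ad - 1$. Multiplying the two hypotheses gives $a^2 \equiv a\cdot d \equiv 1 \pmod{\PP^\delta}$, so Lemma~\ref{L:squaring} applies and yields $a \in \pm 1 + \PP^s$ with $s = \max\{\delta - e, \lceil \delta/2\rceil\}$ as defined. Since $(-a,-d)$ satisfies both hypotheses whenever $(a,d)$ does, I absorb the sign and assume $a \in 1+\PP^s$. Then $d = a + (d-a) \in 1 + \PP^s$ because $\delta \geq s$, which supplies the shape of the expansions \eqref{E:char0adexpression}.

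Next, using $2 = \iota\varpi^e$, I would rewrite
$$ad - 1 = (a-1) + (d-1) + (a-1)(d-1) = \iota\varpi^e(a-1) + (d - a) + (a-1)(d-1),$$
and observe that $ad \equiv 1 \pmod{\PP^{\delta+1}}$ forces this sum to vanish mod $\PP^{\delta+1}$. Since $a_i = d_i$ for $s \leq i < \delta$, the lower-depth contributions from $(d-a)$ are identically zero and those from $\iota\varpi^e(a-1)$ and $(a-1)(d-1)$ can be read off in closed form, so the constraint amounts to requiring the $\varpi^\delta$-coefficient of the sum to lie in $\PP$. Because $-1 \equiv 1 \pmod{\PP}$ in residue characteristic two, this transcribes directly into the claimed formulas for $d_\delta - a_\delta$, once I determine which of the three summands actually reaches depth $\delta$.

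The remaining case analysis is bookkeeping of the depths of $\iota\varpi^e(a-1)$, $(d-a)$, and $(a-1)(d-1)$ against $\delta$ and $\delta+1$. For $\delta \geq 2e+1$, the cross term lies in $\PP^{2s} = \PP^{2\delta - 2e} \subseteq \PP^{\delta+1}$ and drops out, leaving $\iota a_{\delta-e} + (d_\delta - a_\delta) \equiv 0$. For $\delta = 2e$, all three summands contribute at depth $\delta$ and the cross term reduces to its diagonal $a_e d_e = a_e^2$ (since $a_e = d_e$), giving $\iota a_e + a_e^2 + (d_{2e} - a_{2e}) \equiv 0$. For $\delta < 2e$, the elementary inequality $\lfloor \delta/2\rfloor \leq e-1$ yields $e + s \geq \delta+1$, so $\iota\varpi^e(a-1)$ drops out; if $\delta$ is odd then $2s = \delta+1$ and the cross term drops out as well, while if $\delta$ is even the cross term contributes only the diagonal $a_{\delta/2}^2$. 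The $q=2$ postscript is immediate: $\iota \equiv 1\pmod{\PP}$ and $a_e^2 = a_e$ in $\mathbb{F}_2$ cancel the two ``extra'' terms in the $\delta = 2e$ case.

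The main obstacle I foresee is purely bookkeeping --- juggling three summands against two thresholds across four regimes, and correctly identifying $s$ as $\delta - e$ versus $\lceil \delta/2\rceil$ in each. The structural content is entirely in Lemma~\ref{L:squaring}; everything else reduces to the identity $(1+x)(1+y) = 1 + x + y + xy$ together with the facts $2 = \iota\varpi^e$ and $-1 \equiv 1 \pmod{\PP}$.
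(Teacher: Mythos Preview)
Your proposal is correct and follows essentially the same strategy as the paper: both invoke Lemma~\ref{L:squaring} to place $a$ in $\pm 1 + \PP^s$, absorb the sign, and then read off the $\varpi^\delta$-coefficient of $ad-1$ via a case analysis on $\delta$ versus $2e$. The only cosmetic difference is that the paper expands $ad$ directly to obtain $ad \equiv 1 + \iota a_s\varpi^{s+e} + a_s^2\varpi^{2s} + (a_\delta+d_\delta)\varpi^\delta \pmod{\PP^{\delta+1}}$, whereas you reach the same three contributions via the identity $ad-1 = 2(a-1) + (d-a) + (a-1)(d-1)$; the resulting bookkeeping is identical.
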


\begin{proof}
The hypothesis implies that $a^2\equiv 1\mod \PP^\delta$, so by Lemma~\ref{L:squaring} we have $a\in \pm 1 + \PP^s$, where $s=\max\{\delta-e,\lceil \delta/2\rceil\}$.  Note that if the proposition is proven for a pair $(a,d)$, then it follows for the pair  $(-a,-d)$, so we may assume $a\in 1+\PP^s$.  Consequently,  $a,d$ may be written in the form given in \eqref{E:char0adexpression}.  The leading coefficient of $ad-1$ must therefore lie in $\PP^\delta$, and in fact we have
\begin{equation}\label{E:adterm}
ad \equiv 1+2a_s\varpi^s + a_s^2\varpi^{2s} + (a_\delta+d_\delta)\varpi^{\delta}\equiv 1+\iota a_s \varpi^{s+e}+ a_s^2\varpi^{2s}+(a_\delta+d_\delta)\varpi^{\delta}\mod \PP^{\delta+1}.
\end{equation}
By hypothesis,  this expression must be congruent to $1$.

If  $\delta\geq 2e+1$, then $s=\delta-e$ so that $2(\delta-e)\geq\delta+1$. Therefore we must have $\iota a_{\delta-e} + a_\delta + d_\delta \in \PP$, whence $d_\delta \equiv a_\delta +\iota a_{\delta-e} \mod \PP$, as required.

If $\delta=2e$, then $s=e$ and the three terms in \eqref{E:adterm} have valuation $\delta$.  We conclude that 
$\iota a_e+a_e^2 + a_{2e}+d_{2e} \in \PP$,
whence $d_\delta =d_{2e}\in \iota a_e+a_e^2 + a_{2e}+\PP$.

If $\delta<2e$ is even, then $\delta=2s$ but $s+e>\delta$, so there are two terms in \eqref{E:adterm} of minimal valuation and we require $a_{\delta/2}^2+a_\delta+d_\delta\in \PP$, 
yielding $d_\delta\in a_{\delta/2}^2+a_\delta\mod \PP$.

Finally, if $\delta<2e$ is odd, then $2s, s+e>\delta$.  Then the unique term of valuation  $\delta$ in \eqref{E:adterm} has coefficient $a_\delta+d_\delta \mod \PP$, whence $d_\delta \in a_\delta +\PP$.
\end{proof}

It is convenient to summarize the conclusions of Proposition~\ref{L:howcloseadare} as follows.

\begin{cor}\label{C: imageofaminusd}
    Let $\delta, \ell \in \mathbb{Z}_{>0}$ be such that $\delta <\ell$.  Consider the set $V$ of all pairs $(a,d)$ such that  $a,d \in 1+\PP$,  $a\equiv d \mod \PP^\delta$ and $ad \equiv 1 \mod \PP^\ell$.    Let $\rho$ be the map sending $(a,d)\in V$ to $(a - d)+\PP^{\delta+1}$.   
    Then for every $(a,d)\in V$,  $\rho(a,d)$ is independent of the choice of $d$.  Moreover, the image of $\rho$ in $\PP^\delta/\PP^{\delta+1}$ is represented by  
    \begin{itemize}
        \item $\PP^{\delta}$, if $\delta \geq 2e + 1$, or if $\delta < 2e$ is even;
        \item $\PP^{\delta+1}$, if $\delta < 2e$ is odd;
        \item $\mathscr{M}\varpi^\delta + \PP^{\delta+1}$, if $\delta = 2e$, where $\mathscr{M}\subset \mathscr{F}$ is the image of the map $x\mapsto \iota x + x^2$; in particular, $|\mathscr{M}| = q/2.$
    \end{itemize}
\end{cor}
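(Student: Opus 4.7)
The plan is to proceed in two stages: first, establish the well-definedness of $\rho$ on the second coordinate; second, compute its image by translating the case analysis of Proposition~\ref{L:howcloseadare} into data in $\PP^\delta/\PP^{\delta+1}\cong\mathfrak{f}$.

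For well-definedness, if $(a,d),(a,d')\in V$ then $a(d-d')\in\PP^\ell$; since $a\in\RR^\times$ and $\ell\geq\delta+1$, this forces $d-d'\in\PP^\ell\subseteq\PP^{\delta+1}$, so $\rho(a,d)=\rho(a,d')$. For the image, any $(a,d)\in V$ satisfies the weaker congruence $ad\equiv 1\bmod\PP^{\delta+1}$, so Proposition~\ref{L:howcloseadare} applies. After its possibly needed sign replacement, write $a=1+\sum_{i\geq s}a_i\varpi^i$ and $d=a+\sum_{i\geq\delta}(d_i-a_i)\varpi^i$, whence
\begin{equation*}
a-d\equiv -(d_\delta-a_\delta)\varpi^\delta\bmod\PP^{\delta+1}.
\end{equation*}
The replacement $(a,d)\mapsto(-a,-d)$ negates $a-d$, but inside $\PP^\delta/\PP^{\delta+1}\cong\mathfrak{f}$ this is invisible because $\mathfrak{f}$ has characteristic two (equivalently, $2\PP^\delta\subseteq\PP^{\delta+e}\subseteq\PP^{\delta+1}$). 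Hence the image of $\rho$ consists of the classes $(d_\delta-a_\delta)\varpi^\delta+\PP^{\delta+1}$.

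The four cases of Proposition~\ref{L:howcloseadare} confine $d_\delta-a_\delta\bmod\PP$ to $\bar\iota\bar a_{\delta-e}$, to $\bar\iota\bar a_e+\bar a_e^2$, to $\bar a_{\delta/2}^2$, or to $0$, respectively. Since $\bar\iota\in\mathfrak{f}^\times$ and Frobenius squaring is bijective on $\mathfrak{f}$, the first and third cases yield all of $\mathfrak{f}$ (so image $\PP^\delta$), while the fourth gives image $\{0\}=\PP^{\delta+1}$. In the second case, the $\mathbb{F}_2$-linear map $y\mapsto\bar\iota y+y^2$ on $\mathfrak{f}$ has kernel $\{0,\bar\iota\}$, so its image $\mathscr{M}$ has cardinality $q/2$; when $q=2$ this collapses to $\{0\}$.

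To see that every claimed residue is actually attained, I would use the pair $(a,a^{-1})$ with $a=1+\gamma\varpi^s$ for $\gamma\in\RR$. The identity $a^2=1+\iota\gamma\varpi^{s+e}+\gamma^2\varpi^{2s}$, together with the defining inequalities $s+e\geq\delta$ and $2s\geq\delta$ for the $s$ of Proposition~\ref{L:howcloseadare}, shows $a^2\in 1+\PP^\delta$, hence $a^{-1}\equiv a\bmod\PP^\delta$ and $(a,a^{-1})\in V$ for every $\ell>\delta$. A direct computation of $a-a^{-1}=(a^2-1)a^{-1}\bmod\PP^{\delta+1}$ recovers, case by case, the residues $\bar\iota\bar\gamma$, $\bar\iota\bar\gamma+\bar\gamma^2$, and $\bar\gamma^2$, which sweep out the claimed image as $\bar\gamma$ ranges over $\mathfrak{f}$; the odd case is realized trivially by $(1,1)$. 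The main obstacle here is bookkeeping around the sign in Proposition~\ref{L:howcloseadare} — ultimately harmless because $\mathfrak{f}$ has characteristic two — but all substantive arithmetic has already been accomplished in that proposition.
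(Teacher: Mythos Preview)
Your proof is correct and follows essentially the same approach as the paper: both reduce to Proposition~\ref{L:howcloseadare} and read off the residue $d_\delta-a_\delta\bmod\PP$ case by case, using that Frobenius is bijective on $\mathfrak{f}$. Your version is more explicit on two points the paper leaves implicit---the direct well-definedness argument via $a(d-d')\in\PP^\ell\subseteq\PP^{\delta+1}$, and the surjectivity check using the pairs $(1+\gamma\varpi^s,(1+\gamma\varpi^s)^{-1})$---but the underlying strategy is identical.
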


\begin{proof}
    By Lemma~\ref{L:howcloseadareCHAR2} in characteristic two, and Proposition~\ref{L:howcloseadare} in general, the coefficient mod $\PP$ of $\varpi^\delta$ in $a-d$, equivalently, $d_\delta-a_\delta$, is entirely determined by the first nontrivial coefficient of $a$ (or of $-a$, when $a\in -1+\PP^{\delta-e}$ in the case that $\delta \geq 2e+1$).  Recall that the squaring map is an automorphism on $\mathfrak{f}$, and that when $\car(F)=2$ we always have $\delta<2e$.  Thus, gathering these cases of Proposition~\ref{L:howcloseadare} yields the result.
\end{proof}

\section{Depth-zero supercuspidal representations of \texorpdfstring{$G$}{G} and of \texorpdfstring{$G'$}{G'} } \label{S:4}

In this section, we summarize the construction of the depth-zero supercuspidal representations of $G=\mathrm{GL}(2,F)$ and their branching rules upon restriction to a maximal compact subgroup 
as computed in \cite{Han}.

\subsection{Depth-zero supercuspidal representations of \texorpdfstring{$\Gl(2,F)$}{GL(2,F)}}\label{SS:depthzero}

We begin by recapping the representation theory of 
the finite group $\mathsf{G}= \mathrm{GL}(2,\mathfrak{f})$, where $\mathfrak{f}=\mathbb{F}_q$ with $q=2^f$ for some $f\in \mathbb{N}$.  
Our notation follows that of \cite[Ch.15]{DJ}, which applies the theory of Deligne--Lusztig representations.

The group $\mathsf{G}$ has two conjugacy classes of maximal tori over $\mathbb{F}_q$.  Let $\mathsf{S}$ denote the  split diagonal torus, which has $(q-1)^2$ elements, and $\mathsf{T}$ a nonsplit torus, which has $q^2-1$ rational elements. 
All irreducible representations are obtained as the irreducible components of the Deligne--Lusztig induction of characters of these tori.

An element of $\mathsf{T}$ can be realized as the matrix over $\mathbb{F}_q$ representing multiplication in $\mathbb{F}_{q^2}^\times$, whence its set of eigenvalues is given by $\{x,x^q\}$ for some $x\in \mathbb{F}_{q^2}^\times$.  It follows that the conjugacy classes in $\mathsf{G}$ may be indexed as in the first row of Table~\ref{Table:Deligne}, which is the character table for $\mathsf{G}$ as reproduced from \cite[Ch.15, Table 1]{DJ}.  The second and third rows count the number of classes and their cardinalities.  Rows four through six correspond to representations obtained via Deligne--Lusztig theory (in this case, parabolic induction) from $\mathsf{S}$.  The final row corresponds to those irreducible representations obtained from $\mathsf{T}$ and these are the cuspidal representations.  There are $\frac12q(q-1)$ distinct cuspidal representations, each of degree $q-1$.  Moreover, as shown in \cite[Ch 15]{DJ}, when $q$ is \emph{even}, each of these cuspidal representations restricts irreducibly to  $\mathrm{SL}(2,\mathfrak{f})$, and these give all the cuspidal representations of this group.  (When $p$ is odd, there are two non-Deligne--Lusztig cuspidal representations of $\Sl(2,\mathfrak{f})$ of half the degree.)

\begin{table}[!htb]
    \centering
    \begin{tabular}{|c|c|c|c|c|}
    \hline
         Class &  $\begin{bmatrix}
             a\\ & a
         \end{bmatrix}, a\in\mathbb{F}_q^\times$ & $ \begin{bmatrix}
             a \\  & b
         \end{bmatrix}, \begin{array}{l}
         a,b\in \mathbb{F}_q^\times\\ a\not = b 
         \end{array}$ & $\begin{bmatrix}
             x &  \\  & x^q
         \end{bmatrix}, \begin{array}{l}
         x\in \mathbb{F}_{q^2}^\times\\ x\neq x^q 
         \end{array}$ & $\begin{bmatrix}
             a & 1\\ & a
         \end{bmatrix}, a\in\mathbb{F}_q^\times $\\ \hline \hline
         Number of Classes & $q-1$ & $(q-1)(q-2)/2$ & $q(q-1)/2$ & $q-1$\\
         Cardinality of Class & $1$ & $q^2+q$ & $q(q-1)$ & $q^2-1$\\ \hline \hline
         $R(\{\alpha,\beta\}), \alpha\neq\beta\in \widehat{\mathbb{F}_q^\times}$
         & $(q+1)\alpha(a)\beta(a)$ & $\alpha(a)\beta(b)+\alpha(b)\beta(a)$ &  0 & $\alpha(a)\beta(b)$\\
         $\gamma\circ \det, \gamma\in \widehat{\mathbb{F}_q^\times}$ & $\gamma(a^2)$ & $\gamma(ab)$ & $\gamma(x\cdot x^q)$ & $\gamma(a^2)$ \\ 
         $\gamma\circ\det \otimes \mathsf{St}, \gamma\in \widehat{\mathbb{F}_q^\times}$ & $q\gamma(a^2)$ & $\gamma(ab)$ & $-\gamma(x,x^q)$ & $0$\\ \hline
         $-R_{\mathsf{T}}^{\mathsf{G}}(\omega), \omega \in \widehat{\mathbb{F}_{q^2}^\times}, \omega\neq \omega^q$ & $(q-1)\omega(a)$ & 0 & $-\omega(x)-\omega(x^q)$ & $-\omega(a)$\\ \hline
    \end{tabular}
    \caption{Character table of $\mathsf{G}=\mathrm{GL}(2,\mathbb{F}_q)$, reproduced from \cite[Ch.15, Table 1]{DJ}.  Restricting to conjugacy classes in $\mathsf{G}'=\mathrm{SL}(2,\mathbb{F}_q)$ gives the character table for $\mathsf{G}'$ when $q$ is even.}
    \label{Table:Deligne}
\end{table}

By \cite[Proposition 6.6]{MoyPrasad1996}, which holds without restriction on the residual characteristic, all depth-zero supercuspidal representations of $G$ arise from cuspidal representations of $\mathsf{G}$, as follows.

The reductive quotient $\K/\K_+$ 
is isomorphic as an $\mathfrak{f}$-group to $\mathsf{G}$.  Therefore we may inflate a cuspidal representation of $\mathsf{G}$ to a representation $(\sigma,V_\sigma)$ of $\K$.  The normalizer of $\K$ in $G$ is $Z\K$, where $Z$ denotes the center of $G$ and this coincides with the stabilizer in $G$ of the image of $x_0$ in $\buil^{red}(G)$.  The irreducible extensions of $\sigma$ to $Z\K$ are parametrized by the characters $\chi$ of $Z$ extending the central character $\omega$ of $\sigma$, and the representation
\begin{equation}\label{E:depthzero}
\pi(\chi,\sigma) :=\cInd_{Z\K}^{G}\chi \otimes \sigma
\end{equation}
is an irreducible supercuspidal representation of $G$ of depth zero.  Moreover, all irreducible depth-zero supercuspidal representations  arise in this way, for different choices of $\chi$ and $\sigma$.

The following theorem is due to Hansen \cite{Han}, for any residual characteristic.

\begin{thm}\label{Thm:Hansendepthzero}
Let $\pi=\cInd_{Z\mathcal{K}}^{G} \chi \otimes \sigma$ be an irreducible supercuspidal representation of $G=\Gl(2,F)$ of depth $0$, where $\sigma$ is the inflation of a cuspidal representation to $\mathcal{K}$ and $\chi$ is a character of $Z$ extending the central character of $\sigma$. Then with $\gell := \diag(\varpi^\ell, 1)$ and $B_\ell$ the group of lower triangular matrices mod $\PP^\ell$, we have
$$
\Res_{\mathcal{K}} \pi \cong 
\sigma \oplus \bigoplus_{\ell\geq 1}
\Indd_{B_\ell}^{\mathcal{K}} {}^{\gell}\sigma.
$$ 
Moreover, every summand is irreducible and  independent of $\chi$.  When $\ell\geq 1$ the corresponding summand has degree $q^{\ell-1}(q^2-1)$ and depth $\ell$ as a representation of $\mathcal{K}$.
\end{thm}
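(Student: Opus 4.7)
The plan is to apply Mackey theory (Proposition~\ref{P:Mackey}) to the double coset space $\mathcal{K}\backslash G/Z\mathcal{K}$. Since $Z\mathcal{K}$ is compact-mod-centre and $\pi$ is admissible, Proposition~\ref{P:Mackey} gives
\[
\Res_{\mathcal{K}} \pi \;\cong\; \bigoplus_{g \in \mathcal{K}\backslash G/Z\mathcal{K}} \Indd_{{}^{g}(Z\mathcal{K})\cap \mathcal{K}}^{\mathcal{K}} \,{}^{g}(\chi\otimes \sigma).
\]
The Cartan decomposition of $G$, together with the fact that the scalar $\diag(\varpi^m,\varpi^m)\in Z$ absorbs the second diagonal exponent, produces the representatives $\{g_\ell : \ell\geq 0\}$.

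Next I identify each summand. Since $Z$ is central, ${}^{g_\ell}(Z\mathcal{K}) = Z\cdot {}^{g_\ell}\mathcal{K}$. A direct matrix computation shows ${}^{g_\ell}\mathcal{K}\cap \mathcal{K} = B_\ell$, and because $Z\cap \mathcal{K}\subset B_\ell$, we obtain ${}^{g_\ell}(Z\mathcal{K})\cap \mathcal{K} = B_\ell$. The character $\chi$ restricted to $Z\cap\mathcal{K}$ coincides with the central character $\omega$ of $\sigma$ and is absorbed into $\sigma$, so the $\ell$-th summand is $\Indd_{B_\ell}^{\mathcal{K}}\,{}^{g_\ell}\sigma$, independent of $\chi$; for $\ell=0$ this is simply $\sigma$. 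Identifying $\mathcal{K}/B_\ell$ with $\mathbb{P}^1(\mathcal{R}/\mathcal{P}^\ell)$ gives $[\mathcal{K}:B_\ell]=q^{\ell-1}(q+1)$ and hence degree $q^{\ell-1}(q^2-1)$. For the depth I would track the conjugate ${}^{g_\ell}\mathcal{K}_+$ to verify that ${}^{g_\ell}\sigma\vert_{B_\ell}$ becomes trivial on $B_\ell\cap \mathcal{K}_{\ell+1}$ but not on $B_\ell\cap \mathcal{K}_\ell$, pinning the depth at $\ell$; since distinct $\ell$ produce distinct depths, the summands are mutually non-isomorphic.

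The main obstacle is the irreducibility of each summand for $\ell\geq 1$. I would apply Mackey's irreducibility criterion, reducing the problem to showing
\[
\Hom_{B_\ell \cap {}^{k}B_\ell}\bigl({}^{kg_\ell}\sigma,\;{}^{g_\ell}\sigma\bigr) = 0
\]
for every $k$ representing a nontrivial double coset in $B_\ell\backslash \mathcal{K}/B_\ell$. Using the Bruhat-style parametrization of $\mathcal{K}/B_\ell$ by $\mathbb{P}^1(\mathcal{R}/\mathcal{P}^\ell)$, one can select explicit representatives involving the Weyl element $w$ and a unipotent $u$ of varying level; in each case the intersection $B_\ell\cap {}^kB_\ell$ is large enough, once quotiented by the product of conjugates of $\mathcal{K}_+$ on which both $\sigma$'s are trivial, that the cuspidality of $\sigma$ on $\mathcal{K}/\mathcal{K}_+\cong \mathsf{G}$ forces the intertwining space to vanish. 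The delicate step is organizing the case analysis for the intermediate-level representatives, but in every case the vanishing is reduced to the cuspidality of $\sigma$ on the finite reductive quotient; combined with the depth computation above, this yields the full direct-sum decomposition claimed.
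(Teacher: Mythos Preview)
Your proposal is correct and follows essentially the same route as the paper: Mackey theory over $\mathcal{K}\backslash G/Z\mathcal{K}$ with the Cartan representatives $g_\ell$, identification of the intersection as $B_\ell$, independence of $\chi$, and the depth and degree counts all match. The only difference is that the paper does not prove the irreducibility of the summands directly but simply cites Hansen \cite[Thm~2]{Han}, whereas you sketch the Mackey--irreducibility argument yourself; your sketch (reducing to vanishing intertwining over $B_\ell\backslash \mathcal{K}/B_\ell$ via cuspidality on the finite quotient) is the right idea and is effectively what Hansen carries out.
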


\begin{proof}
That $\{g_\ell\mid \ell \geq 0\}$ is a set of coset representatives for $\mathcal{K}\backslash G/Z\mathcal{K}$ follows from the KAK decomposition.  Thus one has a Mackey decomposition with components of the form
$$
\Ind_{\K\cap \prescript{g_\ell}{}{(Z\K)}}^{\K} 
\prescript{g_\ell}{}{(\chi\otimes\sigma)}
$$
for each $\ell\geq 0$.  When $\ell=0$ the inducing subgroup is $B_0=\K$.  When $\ell>0$, ${}^{\gell}\mathcal{K}\cap \mathcal{K}=B_\ell$.  Since $\Res_{\K}(\chi\otimes \sigma)=\Res_{\K}\sigma$, the restriction is independent of $\chi$.  It is direct to show that $\mathcal{K}/\mathcal{K}_{\ell+1}$ is the smallest such quotient group through which $\Indd_{B_\ell}^{\mathcal{K}} {}^{\gell}\sigma$ factors, so the depth of that component is $\ell$.   The rest now follows as in \cite[Thm 2]{Han}.
\end{proof}

\subsection{Restriction to \texorpdfstring{$G'$}{G'}} The irreducible supercuspidal representations of depth zero of $G'$ are exactly the irreducible components of the restriction to $G'$ of some $\pi=\pi(\chi,\sigma) = \cInd_{Z\mathcal{K}}^{G} \chi\otimes\sigma$ as in Section~\ref{SS:depthzero}. 

\begin{lem}\label{dc4.1} 
A set of representatives for the double coset space $G'\backslash G/Z\K$ is $\{I, \gone=\diag(\varpi,1)\}.$
\end{lem}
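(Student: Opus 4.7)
The plan is to use the determinant map to reduce the double coset problem to a computation in $F^\times$.  First, I would observe that $\det(Z\mathcal{K}) = F^{\times 2}\mathcal{R}^\times$: every $zk$ with $z = aI \in Z$ and $k \in \mathcal{K}$ has $\det(zk) = a^2 \det k \in F^{\times 2}\mathcal{R}^\times$, and conversely any $a^2 u$ with $u \in \mathcal{R}^\times$ arises as $\det(aI \cdot \diag(u,1))$.  Combined with $\det(G') = \{1\}$, this suggests the determinant descends to a map
\begin{equation*}
    \Phi : G'\backslash G/Z\mathcal{K} \longrightarrow F^\times/(F^{\times 2}\mathcal{R}^\times), \qquad G'gZ\mathcal{K} \mapsto \det(g)\cdot F^{\times 2}\mathcal{R}^\times.
\end{equation*}

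Next I would verify that $\Phi$ is well-defined (immediate from the preceding) and bijective.  Surjectivity is clear since $\det \diag(a,1) = a$ for any $a \in F^\times$.  For injectivity, suppose $\det g = \det h \cdot \alpha^2 u$ with $\alpha \in F^\times$ and $u \in \mathcal{R}^\times$.  Then the element $h \cdot \alpha I \cdot \diag(u,1)$ lies in $h \cdot Z\mathcal{K}$ and has determinant $\det g$; replacing $h$ by this representative, we may assume $\det g = \det h$, in which case $gh^{-1} \in G'$ gives $g \in G' h \subset G' h Z\mathcal{K}$.

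Finally I would identify $F^\times/(F^{\times 2}\mathcal{R}^\times) \cong \mathbb{Z}/2\mathbb{Z}$, the isomorphism being the parity of the valuation (since $\varpi^2 \in F^{\times 2}$ and $\mathcal{R}^\times$ accounts for all units).  The two classes are therefore represented by elements of even and odd valuation of the determinant, and indeed $\det(I) = 1$ and $\det(g_1) = \varpi$ realize both parities, so $\{I, g_1\}$ is a complete set of representatives.

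There is no substantive obstacle here: the argument is a direct determinant bookkeeping, and the only mild subtlety --- that the image of $\det$ on $Z\mathcal{K}$ captures exactly the subgroup $F^{\times 2}\mathcal{R}^\times$ --- is handled in the first step.  I note that this lemma makes no use of the residual characteristic being two; it is the subsequent intertwining and Mackey analysis that the characteristic two square-class structure from Section~\ref{S:3} will complicate.
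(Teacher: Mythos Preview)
Your proof is correct and takes essentially the same approach as the paper: both arguments reduce the double coset count to the quotient $F^\times/(F^{\times 2}\mathcal{R}^\times)$ via the determinant, identify this with $\mathbb{Z}/2\mathbb{Z}$ by valuation parity, and exhibit $I$ and $g_1$ as representatives. The paper compresses all of this into two lines by noting $G'Z\mathcal{K}=\det^{-1}(\mathcal{R}^\times(F^\times)^2)$; your version simply makes the well-definedness and bijectivity explicit.
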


\begin{proof}
The subgroup $G'Z\K$ is the inverse image of  $\mathcal{R}^\times(F^\times)^2$ under the determinant map and the quotient $F^\times/\mathcal{R}^\times(F^\times)^2$ is represented by $\{1,\varpi\}$. 
\end{proof}

Thus applying Mackey theory (Proposition~\ref{P:Mackey})  we have
\begin{align*}
    \Res_{G'} \pi(\chi,\sigma) = \Res_{G'} \cInd_{Z\mathcal{K}}^{G} \chi\otimes\sigma
    =  \Indd_{G'\cap Z\mathcal{K}}^{G'} (\chi\otimes\sigma) \oplus \Indd_{G'\cap {}^{g_1} (Z\mathcal{K})}^{G'} {}^{g_1}(\chi\otimes\sigma).
\end{align*}
As $Z\K$ is the stabilizer of $x_0\in \buil^{red}(G)=\buil(G')$, we infer that $G'\cap Z\K=G'_{x_0}=\K'$, the stabilizer of $x_0$ in $G'$.  Recall that $x_1 = g_1\cdot x_0$ is an adjacent but non-$G'$-conjugate vertex whose stabilizer is the subgroup $\maxotherKSL= G'\cap\prescript{g_1}{}{(Z\mathcal{K})}$.

\begin{thm}\label{4.2}
The restriction of $\pi(\chi,\sigma)$ to $G'$ is the sum of two irreducible supercuspidal representations
\begin{equation} \label{E:Grestdepthzero}
\pi_0(\sigma) := \Indd_{\K'}^{G'}\sigma \quad \text{and}\quad
\pi_1(\sigma) := \Indd_{\maxotherKSL}^{G'} {}^{g_1}\sigma,
\end{equation}
one for each conjugacy class of maximal compact subgroup of $G'$ and these are independent of the choice of $\chi$.
Up to isomorphism all irreducible depth-zero supercuspidal representations of $G'$ arise in this way.
\end{thm}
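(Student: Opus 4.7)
My plan is to verify in turn the four assertions of the theorem: the two-summand decomposition, the independence from $\chi$, the irreducibility and supercuspidality of each $\pi_i(\sigma)$, and the exhaustiveness for depth-zero supercuspidal representations of $G'$. The decomposition is immediate from Proposition~\ref{P:Mackey} applied with the coset representatives $\{I,g_1\}$ supplied by Lemma~\ref{dc4.1} --- the preceding display in the excerpt already records it --- provided one identifies $G'\cap Z\K = \K'$ and $G'\cap \prescript{g_1}{}{(Z\K)} = \prescript{g_1}{}{\K'}$. Both identifications reduce to the observation that $zk\in Z\K$ has determinant $1$ only when $z\in \RR^\times$, forcing $zk\in \K$. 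Independence from $\chi$ is then transparent: since $\chi$ extends the central character of $\sigma$, we have $(\chi\otimes\sigma)|_{\K} = \sigma$, so restricting further to $\K'\subset \K$ (or to its $g_1$-conjugate) leaves no room for $\chi$.

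The heart of the proof is the irreducibility of $\pi_0(\sigma) = \Indd_{\K'}^{G'}\sigma$; then $\pi_1(\sigma)$ follows by $g_1$-conjugation, and supercuspidality of each summand is automatic from compact induction from a compact open subgroup. I would apply Mackey's intertwining criterion, reducing to the vanishing of $\Hom_{\K'\cap \prescript{g}{}{\K'}}(\sigma,\prescript{g}{}{\sigma})$ for every $g\in G'\setminus \K'$. By the Cartan decomposition it suffices to take $g=\diag(\varpi^n,\varpi^{-n})$ with $n\geq 1$, and a direct matrix computation then gives $\K'\cap \prescript{g}{}{\K'} = B'_{2n}$. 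Let $U_{2n}\subset B'_{2n}$ denote the preimage of the lower unipotent radical under the Moy--Prasad reduction $B'_{2n}\twoheadrightarrow \bar{B}'\subset \Sl(2,\mathfrak{f})$. Conjugation by $g$ multiplies $(2,1)$-entries by $\varpi^{2n}$, so $\prescript{g}{}{\sigma}|_{U_{2n}}$ acts trivially, while the cuspidality of $\sigma$ as a representation of $\Sl(2,\mathfrak{f})$ forces $\sigma|_{U_{2n}}$ to have no nonzero coinvariants. Any $B'_{2n}$-intertwiner must factor through these vanishing coinvariants, hence must be zero.

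Exhaustiveness is a consequence of Moy--Prasad theory \cite{MoyPrasad1996}: every irreducible depth-zero supercuspidal of $G'$ is compactly induced from a cuspidal representation $\sigma'$ of the reductive quotient $G'_{x,0}/G'_{x,0+}$ at some vertex $x$, with the vertex stabilizer $G'_x$ coinciding with $G'_{x,0}$ since $\Sl(2)$ is simply connected. There are only two conjugacy classes of vertices, represented by $\K'$ and $\prescript{g_1}{}{\K'}$, both with reductive quotient $\Sl(2,\mathfrak{f})$. When $q$ is even, the discussion following Table~\ref{Table:Deligne} records that every cuspidal of $\Sl(2,\mathfrak{f})$ is the restriction of one of $\Gl(2,\mathfrak{f})$, which inflates to a cuspidal $\sigma$ on $\K$; any extension $\chi$ of its central character then produces the required $\pi(\chi,\sigma)$. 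I expect the main obstacle to be the irreducibility step --- translating the cuspidality of the finite-group representation $\sigma$ into vanishing of $B'_{2n}$-intertwiners via a careful bookkeeping of how the Moy--Prasad reduction interacts with conjugation by the Cartan element $g$.
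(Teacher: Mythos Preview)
Your overall strategy is sound and more self-contained than the paper's, which simply notes that $\sigma$ restricts to a cuspidal representation of $\Sl(2,\mathfrak{f})$, observes that $\K'$ and $\prescript{g_1}{}{\K'}$ are self-normalizing in $G'$, and then invokes \cite[Proposition~6.6]{MoyPrasad1996} to obtain irreducibility, supercuspidality, and exhaustiveness all at once. Your direct Mackey-intertwining computation is essentially what that proposition is doing under the hood for this group.

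There is, however, a genuine gap in your irreducibility step. With $g=\diag(\varpi^n,\varpi^{-n})$ and $U_{2n}$ defined as the \emph{full} preimage in $B'_{2n}$ of the lower unipotent radical of $\bar{B}'$, the claim that $\prescript{g}{}{\sigma}|_{U_{2n}}$ is trivial is false. A general $k\in U_{2n}$ may have $(1,2)$-entry $b\in\PP^{2n}\setminus\{0\}$, and then $g^{-1}kg$ has $(1,2)$-entry $b\varpi^{-2n}\in\RR$, which can reduce to a nontrivial upper-unipotent element of $\Sl(2,\mathfrak{f})$; the fact that the $(2,1)$-entry is pushed into $\PP^{2n}$ is not enough. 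The easy fix is to replace $U_{2n}$ by the smaller subgroup $U_0=\bigl\{\left[\begin{smallmatrix}1&0\\ c&1\end{smallmatrix}\right]:c\in\RR\bigr\}\subset B'_{2n}$. Then $g^{-1}U_0g\subset\K'_+$, so $\prescript{g}{}{\sigma}|_{U_0}$ really is trivial, while cuspidality of $\sigma$ still gives $\sigma_{U_0}=0$ (since $U_0$ surjects onto the unipotent radical of $\bar B'$); any $B'_{2n}$-intertwiner must then factor through these vanishing coinvariants and hence be zero. With this correction your argument goes through.
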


\begin{proof}
    As mentioned in Section~\ref{SS:depthzero}, 
     the restrictions to $\mathrm{SL}(2,\mathfrak{f})$ of the cuspidal representations of $\mathrm{GL}(2,\mathfrak{f})$ are all irreducible and cuspidal. It follows that $\Res_{\maxKSL}\sigma$ and $\Res_{\maxotherKSL}({}^{g_1}\sigma)$ are each the inflation of a cuspidal  representation of the corresponding finite group quotient, which is isomorphic to $\mathrm{SL}(2,\mathfrak{f})$. 
     Since these maximal compact subgroups  are self-normalizing in $G'$, \cite[Proposition 6.6]{MoyPrasad1996} directly yields that as $\sigma$ varies over the cuspidal representations of $G$, $\pi_0(\sigma)$ and $\pi_1(\sigma)$ yield all irreducible supercuspidal representations of depth zero of $G'$.
\end{proof}

\subsection{Restriction to \texorpdfstring{$\mathcal{K}'$}{K'}}  \label{5dzero} 
As any two maximal compact subgroups of $G'$ are $G$-conjugate, we may recover the branching to any maximal compact subgroup from the restriction to $\K'$. 
From the preceding, we have two distinct ways to restrict a depth-zero 
supercuspidal representation of $G$ to $\mathcal{K}'$; our first step is to relate them.

Let $\pi=\pi(\sigma,\chi)$ be an irreducible depth-zero supercuspidal representation of $G$. Note that for any $\ell \geq 1$, $\mathcal{K}=\mathcal{K}'B_\ell$:  if $g\in \mathcal{K}$ we may choose any $b\in B_\ell$ for which $\det(g)=\det(b)$ and set $k=gb^{-1}\in \mathcal{K}'$.  Writing $B_\ell':= \mathcal{K}'\cap B_\ell$, it follows then by Mackey theory that 
$$
\Res_{\mathcal{K}'}\Ind_{B_\ell}^{\mathcal{K}}{}^{\gell}\sigma = \Ind_{B_\ell'}^{\mathcal{K}'}{}^{\gell}\sigma.
$$
Applying Theorem~\ref{Thm:Hansendepthzero}, we infer that these 
are the components of $\Res_{\mathcal{K'}}\pi(\sigma,\chi) = \Res_{\mathcal{K}'}(\pi_0(\sigma)\oplus \pi_1(\sigma))$, though now they  will not in general be irreducible.   Write $\sigma$ also for the inflation to $\K'$ of the restriction of $\sigma$ to $\Sl(2,\mathfrak{f})$.

\begin{cor}\label{Cor:MackeycomponentsdepthzeroSL2} 
    Let $\sigma$ be a cuspidal representation of $\mathrm{SL}(2,\mathfrak{f})$. Then
    $$
    \Res_{\mathcal{K}'}\pi_0(\sigma) \cong \sigma \oplus \bigoplus_{\ell \in 2\Z_{\geq 1}}\Ind_{B_\ell'}^{\mathcal{K}'}{}^{\gell}\sigma, 
    \quad \text{and} \quad 
    \Res_{\mathcal{K}'}\pi_1(\sigma) \cong \bigoplus_{\ell\in1+ 2\Z_{\geq 0}} \Ind_{B_\ell'}^{\mathcal{K}'}{}^{\gell}\sigma.    
    $$
    Each representation $\sigma(\ell):=\Ind_{B_\ell'}^{\mathcal{K}'}{}^{\gell}\sigma$ has degree $q^{\ell-1}(q^2-1)$ and decomposes as a direct sum of irreducible representations of depth $\ell$, all of the same degree.  
\end{cor}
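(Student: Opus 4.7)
\emph{Plan.} The approach is to combine Hansen's Theorem~\ref{Thm:Hansendepthzero} with a further Mackey decomposition from $\K$ to $\K'$, sort the resulting Mackey components between $\pi_0(\sigma)$ and $\pi_1(\sigma)$ according to the parity of $\ell$, and then apply Clifford theory to obtain the depth and equal-dimension claims. Since $\det\colon B_\ell \to \RR^\times$ is surjective for every $\ell\geq 1$, one has $\K = \K' B_\ell$, and Mackey then gives $\Res_{\K'}\Ind_{B_\ell}^{\K}{}^{\gell}\sigma = \Ind_{B_\ell'}^{\K'}{}^{\gell}\sigma = \sigma(\ell)$; combined with Theorem~\ref{Thm:Hansendepthzero} this yields $\Res_{\K'}\pi(\chi,\sigma) \cong \sigma \oplus \bigoplus_{\ell\geq 1}\sigma(\ell)$. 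To distribute these summands between $\pi_0(\sigma)$ and $\pi_1(\sigma)$, I would compare this refinement with the $G'$-decomposition of Lemma~\ref{dc4.1}, whose double cosets are $\{I,g_1\}$: for even $\ell$ one has $\gell = \varpi^{\ell/2}\,\diag(\varpi^{\ell/2},\varpi^{-\ell/2}) \in ZG'$, so $\sigma(\ell)$ belongs to $\pi_0(\sigma)$; for odd $\ell$ one has $\gell = g_1 g_{\ell-1}$ with $g_{\ell-1}\in ZG'$, so $\sigma(\ell)$ belongs to $\pi_1(\sigma)$. This yields both displayed decompositions.

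The dimension is immediate: $[\K':B_\ell']=[\K:B_\ell]=q^{\ell-1}(q+1)$ by a direct count in $\Sl(2,\RR/\PP^\ell)$, and $\dim\sigma = q-1$ from Table~\ref{Table:Deligne}, so $\dim\sigma(\ell) = q^{\ell-1}(q^2-1)$.

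For the depth and equal-dimension claims, I would apply Clifford theory (Theorem~\ref{T:Clifford}) with the normal subgroup $\K_\ell' \triangleleft \K'$. Since $\K_\ell'\subset B_\ell'$ is normal in $\K'$, the Mackey decomposition of the restriction collapses to $\Res_{\K_\ell'}\sigma(\ell) = \bigoplus_{g \in \K_\ell'\backslash\K'/B_\ell'}({}^{g\gell}\sigma)\big|_{\K_\ell'}$. A direct matrix calculation shows that for $k\in \K_\ell'$ with $(1,2)$-entry $b\in \PP^\ell$, the conjugate $\gell^{-1}k\gell$ reduces modulo $\K_1'$ to the upper-unipotent element of $\Sl(2,\mathfrak{f})$ with parameter $\varpi^{-\ell}b\bmod\PP$; hence $({}^{\gell}\sigma)|_{\K_\ell'}$ is the inflation via the surjection $\K_\ell'\twoheadrightarrow \mathfrak{f}$ of the restriction of $\sigma$ to the upper-unipotent subgroup $U^+\subset\Sl(2,\mathfrak{f})$. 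A short Fourier computation using the cuspidal character values (fourth row of Table~\ref{Table:Deligne}) shows that $\sigma|_{U^+}$ is the sum of every non-trivial character of $U^+\cong\mathfrak{f}$, each with multiplicity one, and in particular contains no trivial constituent. Conjugation by $\K'$ carries the same conclusion to every Mackey summand, so $\sigma(\ell)^{\K_\ell'}=0$ and every irreducible constituent has depth exactly $\ell$. Via the Moy--Prasad isomorphism~\eqref{E:MP}, these characters correspond to non-zero nilpotent elements of $\sll(2,\mathfrak{f})$, which in residual characteristic two form a single $\Sl(2,\mathfrak{f})$-orbit, hence a single $\K'$-orbit of characters; Clifford theory then forces all irreducible constituents of $\sigma(\ell)$ to share the common dimension $[\K':\K'(\lambda)]$. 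The main obstacle lies in this last paragraph, which rests on two distinct characteristic-two inputs: the explicit cuspidal restriction to $U^+$ (for non-triviality at depth $\ell$) and the collapse of non-zero nilpotent orbits in $\sll(2,\mathfrak{f})$ (for Clifford transitivity, giving equal dimensions).
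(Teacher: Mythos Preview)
Your decomposition argument and your parity sorting are correct and essentially coincide with the paper's approach: the paper observes in the discussion preceding the corollary that $\K=\K'B_\ell$ gives $\Res_{\K'}\Ind_{B_\ell}^{\K}{}^{g_\ell}\sigma=\sigma(\ell)$, and in the proof uses the Cartan decomposition $\{\xi_t=\diag(\varpi^t,\varpi^{-t})\}$ of $\K'\backslash G'/\K'$ (resp.\ $\K'\backslash G'/{}^{g_1}\K'$) directly on $\pi_i(\sigma)$, which amounts to the same parity split you obtain from $g_\ell\in ZG'$ versus $g_\ell\in g_1ZG'$. The dimension count is fine. Your depth argument, though heavier than necessary, is correct: the computation of $({}^{g_\ell}\sigma)|_{\K_\ell'}$ via the upper-unipotent restriction of $\sigma$ does show $\sigma(\ell)^{\K_\ell'}=0$, and triviality on $\K_{\ell+1}'$ is immediate.

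The genuine gap is in your equal-dimension step. Knowing that the characters of $\K_\ell'$ appearing in $\sigma(\ell)$ form a single $\K'$-orbit does \emph{not} force the irreducible $\K'$-constituents to have equal dimension. Clifford theory only tells you that each such constituent $\pi$ is of the form $\Ind_{N_{\K'}(\lambda)}^{\K'}\sigma'$ for some irreducible $\sigma'$ of $N_{\K'}(\lambda)$ lying over $\lambda$; distinct $\sigma'$ can have different dimensions, so $\dim\pi=[\K':N_{\K'}(\lambda)]\dim\sigma'$ need not be constant. In fact your asserted value $[\K':N_{\K'}(\lambda)]$ is far too small: the normalizer of your character $\lambda$ of $\K_\ell'$ is $B_1'=U_0\K_1'$ (a short computation as in Lemma~\ref{normalizerCharEta}), giving $[\K':N_{\K'}(\lambda)]=q+1$, whereas Lemma~\ref{degInducedofNilp} shows the actual constituent dimensions grow like $q^{\ell}$.

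The paper's fix is much simpler and avoids this entirely: since $\K'$ is normal in $\K$ and Hansen's theorem gives that $\Ind_{B_\ell}^{\K}{}^{g_\ell}\sigma$ is an \emph{irreducible} representation of $\K$, Clifford theory (Theorem~\ref{T:Clifford}) applied with $N=\K'\trianglelefteq K=\K$ yields immediately that the $\K'$-constituents of its restriction $\sigma(\ell)$ are pairwise $\K$-conjugate, hence of equal dimension and (since $\K$ normalizes each $\K_n'$) of equal depth. This is the key missing idea: apply Clifford one level up, at $\K'\trianglelefteq\K$, rather than at $\K_\ell'\trianglelefteq\K'$.
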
    

\begin{proof}
    By the Cartan decomposition, we have that a set of double coset representatives for either $\K'\backslash G'/\K'$ or $\K'\backslash G'/\maxotherKSL$ is $\{\xi_t=\diag(\varpi^{t}, \varpi^{-t})\mid t\in\mathbb{Z}_{\geq 0}\}.$  Setting $\ell=2t$, we have $\xi_t = \gell z^{-t}$ where $z=\diag(\varpi,\varpi)\in Z$, which implies that $\xi_t$ and $\gell$ act identically via conjugation.  Thus, applying the Mackey decomposition to each of the induced representations $\pi_i(\sigma)$ as in \eqref{E:Grestdepthzero} yields the first statement.  Since $\K'$ is normal in $\K$, the irreducible subrepresentations of each component are $\K$-conjugate by Theorem~\ref{T:Clifford}.  We  deduce the rest from Theorem~\ref{Thm:Hansendepthzero}. 
\end{proof}

Note that the supercuspidal representations denoted $\pi_1(\sigma)$ are in fact distinguished by the property that $\pi_1(\sigma)^{\K'_+}=\{0\}$, since $\K'=G'_{x_0}$ and $\pi_1(\sigma)$ is induced from a vertex that is not conjugate to $x_0$ \cite{Latham2017}.

We can summarize these results in the following diagram.
\begin{equation}\label{diagram}
    \begin{tikzcd}[column sep=large, row sep=large, every arrow/.append style={-latex, font=\normalsize}]
        \pi \arrow[r, "\Res_{\mathcal{K}}"] \arrow[d, "\Res_{G'}"] & 
        \displaystyle\bigoplus_{\ell\geq0} \text{Ind}_{B_\ell}^{\mathcal{K}}  {}^{\gell} \sigma \arrow[d, "\Res_{\mathcal{K}'}"] \\
       \pi_0(\sigma)\oplus \pi_1(\sigma) 
        \arrow[r, "\Res_{\mathcal{K}'}"] & 
         \displaystyle\bigoplus_{\ell \in 2\Z_{\geq 0}} \Ind_{B_\ell'}^{\mathcal{K}'}{}^{\gell}\sigma \, \oplus
      \,   \bigoplus_{\ell \in 1+2\Z_{\geq 0}} \Ind_{B_\ell'}^{\mathcal{K}'}{}^{\gell}\sigma
    \end{tikzcd}
\end{equation}

This holds also when $p$ is odd \cite[\S4]{Nev}.

\section{Intertwining operators of the Mackey components}\label{Sec: Branching Rules}

We next focus on each of the Mackey components
\begin{equation}\label{E:shorthandMackeycomponents}
\sigma(\ell):= \Ind_{B_\ell'}^{\mathcal{K}'}{}^{\gell}\sigma, \quad \text{where $g_\ell = \diag(\varpi^\ell,1)$,}
\end{equation}
for $\ell \geq 1$.  In this section, we compute the dimension of their self-intertwining space $\Sigma(\ell):=\dim\End_{\K'}(\sigma(\ell))$.
As a first step we require a 
set of representatives for the double coset space $B_\ell'\backslash \mathcal{K}'/B_\ell'$.

\begin{dfn}\label{D:Sellk} Let $\cS$ be a set of representatives for $\RR^\times / \RR^{\times 2}$, as in Lemma~\ref{Lemma:Cassel}. Let  $\cS_{k}$ denote a set of representatives of the equivalence classes of elements of $\cS$ modulo $\PP^k$.  For  any $1\leq k<\ell$, let $\cS_{\ell,k}$ denote a set of representatives for the equivalence classes of elements of $\cS$ modulo $\PP^{\min\{\ell-k, k\}}$. 
\end{dfn}

That is, $u,u'\in \RR^\times$ represent the same class in $\cS_k$ if $u'\in u (\RR^\times)^2(1+\PP^k)$.   
For example, if $F=\mathbb{Q}_2$ then with $\cS=\{1 + a_1\varpi + a_2\varpi^2\mid a_i \in \{0,1\}\}$ we have
\begin{itemize}
    \item $\cS_1=\cS_{\ell,1}=\cS_{\ell, \ell-1}=\{1\}$ for all $\ell\geq 2$;
    \item $\cS_2=\cS_{\ell,2}=\cS_{\ell, \ell-2} = \{1, 1+\varpi\}$ for all $\ell \geq 4$;
    \item $\cS_3=\cS_{\ell,3}=\cS_{\ell, \ell-3} = \cS$ for all $\ell \geq 6$.
\end{itemize}

\begin{lemma}\label{valueOfSellK}
    If $k>2e$ then $\cS_{k}=\cS$ and has cardinality $2q^e$;  otherwise, $\vert \cS_{k} \vert = q^{\lfloor k/2 \rfloor}$.  When $1\leq k\leq \ell/2$ we have $\cS_{\ell,k}=\cS_{\ell,\ell-k}=\cS_k$.  
\end{lemma}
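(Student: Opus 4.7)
The plan is to leverage the explicit normal forms of $\cS$ given in Lemma~\ref{Lemma:Cassel} and show that equivalence modulo $\PP^k$ simply amounts to truncating a ``polynomial in $\varpi$ with constrained coefficients.''  Specifically, each element of $\cS$ is determined by a finite (in characteristic zero) or countable (in characteristic two) sequence of independent coefficient slots:  one slot at each odd power $\varpi^{2i-1}$ taking values in $\mathscr{F}$, and (only when $\car(F)=0$) one extra slot at $\varpi^{2e}$ coming from the $4\gamma$ term, with $\gamma\in \{0,\aleph\}$.  Because these slots sit at pairwise distinct valuations, two elements of $\cS$ agree modulo $\PP^k$ if and only if all slots at valuations strictly less than $k$ carry the same coefficient.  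This reduces the count of $|\cS_k|$ to a counting problem on slot positions.

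First I would count odd-position slots with valuation strictly less than $k$: there are $\lfloor k/2\rfloor$ such slots, each contributing a factor of $q$.  When $k\leq 2e$, the exceptional $\varpi^{2e}$ slot (if it exists) has valuation $\geq k$ and so is invisible modulo $\PP^k$, giving $|\cS_k|=q^{\lfloor k/2\rfloor}$; this covers all $k$ in characteristic two since $e=\infty$.  When $\car(F)=0$ and $k>2e$, every odd-position slot through $\varpi^{2e-1}$ is visible, contributing $q^e$, and the $4\gamma$ slot contributes an additional factor of $2$, yielding $|\cS_k|=2q^e$.  Moreover, for $k>2e$ any two distinct normal-form elements of $\cS$ differ in some visible slot (their difference has valuation at most $2e<k$), so no collapse occurs and $\cS_k=\cS$.

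For the second assertion, I would simply unfold the definition: for $1\leq k\leq \ell/2$ we have $\min\{\ell-k,k\}=k$, so $\cS_{\ell,k}$ is by definition a set of representatives of $\cS$ modulo $\PP^{k}$, that is, $\cS_k$; and for $\cS_{\ell,\ell-k}$ we likewise compute $\min\{\ell-(\ell-k),\ell-k\}=\min\{k,\ell-k\}=k$, so $\cS_{\ell,\ell-k}=\cS_k$ as well.  The only place calling for care is ensuring that the slot positions of Lemma~\ref{Lemma:Cassel} really are pairwise distinct and that no unexpected $\PP^k$-collisions occur among the normal-form expressions; this is immediate from the canonical form, so I do not anticipate a serious obstacle.
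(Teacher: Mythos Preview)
Your proposal is correct and follows essentially the same approach as the paper: both arguments use the explicit representatives of $\cS$ from Lemma~\ref{Lemma:Cassel} and count how many of the independent coefficient positions are visible modulo $\PP^k$. Your ``slot'' language makes explicit what the paper's proof states tersely (namely, that $\cS_k=\cS_{k-1}$ for odd $k<2e$ and that $k/2$ coefficients are free for even $k\leq 2e$), and both handle the final assertion by unwinding $\min\{\ell-k,k\}=k$ when $k\leq \ell/2$.
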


\begin{proof}
    We choose $\cS$ as in Lemma~\ref{Lemma:Cassel}. If $k>2e$ (which occurs only when $\car(F)=0$) then all elements of $\cS$ are distinct modulo $\PP^k$.  For every odd $k<2e$, we have $\cS_{k}=\cS_{k-1}$.  For every even $k\leq 2e$, we may choose $k/2$ coefficients freely from $\mathscr{F}$, a set of representatives for the residue field. The final statement follows by the symmetry in $\cS_{\ell,k}$.   
\end{proof}

\begin{prop}\label{Prop:doublecosets6.2} 
For each $\ell\geq 1$ the double coset space $B_\ell'\backslash \mathcal{K}'/B_\ell'$ is represented by 
$$
\dcst_\ell := \{ I, w\} \cup \bigcup_{1\leq k <\ell} \left\{
g(k,\alpha):=\begin{bmatrix}1&\alpha \varpi^k\\0&1\end{bmatrix}\; \middle|\; \alpha \in \cS_{\ell,k}\right\}.
$$
\end{prop}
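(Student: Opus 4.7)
The plan is to classify these double cosets by a direct matrix-theoretic reduction, using the valuation of the upper-right entry as the key invariant.  For any $g = \left[\begin{smallmatrix}a & b\\ c & d\end{smallmatrix}\right]\in \K'$, set $k(g) := \val(b) \in \mathbb{Z}_{\geq 0} \cup \{\infty\}$.  The first step is to verify that $\min\{k(g), \ell\}$ is constant on $B_\ell' \times B_\ell'$-orbits, since left (resp.\ right) multiplication by $B_\ell'$ alters the $(1,2)$-entry within $b\RR^\times + d\PP^\ell$ (resp.\ $a\PP^\ell + b\RR^\times$), preserving its valuation whenever it is bounded by $\ell$.  Consequently the unique double coset with $k(g)\geq \ell$ is $B_\ell'$ itself, represented by $I$, and the remaining analysis splits according to the value of $k(g)\in\{0, 1, \ldots, \ell-1\}$.

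For $k(g) = 0$ one has $b\in\RR^\times$; right-multiplying by $\left[\begin{smallmatrix}1 & 0 \\ -ab^{-1} & 1\end{smallmatrix}\right]\in B_\ell'$ kills the $(1,1)$-entry, then left-multiplying by the lower unipotent that kills the $(2,2)$-entry, and finally conjugating by a diagonal element of $\K'$, reduces $g$ to $w$.  Hence there is exactly one double coset with $k(g) = 0$.  For $0 < k < \ell$, the identity $ad - bc = 1$ together with $\val(bc)\geq k > 0$ forces $a,d\in\RR^\times$, so right-multiplying by $\left[\begin{smallmatrix}1 & 0 \\ -d^{-1}c & 1\end{smallmatrix}\right]\left[\begin{smallmatrix}d & 0\\ 0 & d^{-1}\end{smallmatrix}\right]\in B_\ell'$ reduces $g$ to the form $g(k,\alpha)$ with $\alpha = bd^{-1}\varpi^{-k}\in\RR^\times$.

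The final step is to compute the residual equivalence on $\alpha$ induced by the subgroup of $B_\ell'\times B_\ell'$ that preserves the shape $g(k,\cdot)$.  Three elementary actions contribute: conjugation by $\diag(x,x^{-1})$ sends $\alpha\mapsto x^2\alpha$, accounting for the factor $(\RR^\times)^2$; right multiplication by an upper unipotent with entry in $\PP^\ell$ shifts $\alpha$ within $\alpha + \PP^{\ell-k}$, contributing $1+\PP^{\ell-k}$ since $\alpha\in\RR^\times$; and left multiplication by $\left[\begin{smallmatrix}1 & 0 \\ y & 1\end{smallmatrix}\right]$ with $y\in\RR$, matched with the compensating right lower-unipotent and diagonal correction that restore the canonical form, sends $\alpha \mapsto \alpha/(1+y\alpha\varpi^k)$, contributing the factor $1+\PP^k$.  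Combining, $\alpha$ is determined in $\RR^\times/(\RR^\times)^2(1+\PP^{\min\{k,\ell-k\}})$, and by Lemma~\ref{valueOfSellK} a set of representatives is exactly $\cS_{\ell,k}$.

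The main obstacle is the third action: one must verify that left-multiplication by a lower unipotent, paired with the correct right correction, yields multiplication of $\alpha$ by an arbitrary element of $1+\PP^k$.  A secondary care point is to ensure that no further equivalences arise from more elaborate products in $B_\ell' \times B_\ell'$; this follows from the fact that every element of $B_\ell'$ decomposes as a product of a diagonal, a lower unipotent in $B\cap\K'$, and an upper unipotent with entry in $\PP^\ell$, so that the three elementary operations already generate the full action on the normal form.
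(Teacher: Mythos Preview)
Your proof is correct and follows essentially the same strategy as the paper's: stratify $\K'$ by the valuation $k$ of the upper-right entry, reduce each stratum to a canonical form $g(k,\alpha)$, and then determine the residual equivalence on $\alpha$.  The difference lies only in this last step.  The paper writes out the matrix equation $h\,g(k,\alpha) \equiv g(k,\alpha')\,h' \pmod{\PP^\ell}$ directly and reads off the condition $\alpha' \equiv a^2\alpha \pmod{\PP^{\min\{k,\ell-k\}}}$ in one stroke, obtaining both sufficiency and necessity at once.  You instead decompose the action into three elementary moves (diagonal conjugation, right upper-unipotent shift, left lower-unipotent with correction) and compute each effect on $\alpha$ separately; this cleanly establishes sufficiency, but your necessity argument (``no further equivalences arise'') is only a sketch---the three moves do not literally generate $B_\ell' \times B_\ell'$, and one must actually factor an arbitrary $h_1 = d\,u\,n$ and trace the effect on the normal form step by step to see that the resulting $\alpha'$ stays in the claimed class.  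That trace-through is routine, so there is no genuine gap, but the paper's single matrix computation is the crisper way to close the argument.
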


\begin{proof}
The group $\mathcal{K}'$ decomposes as the disjoint union of the following sets of matrices (of determinant one), each of which is invariant under left and right multiplication by elements of $B'_\ell$:
\begin{equation}\label{components}
{\K'} = \begin{bmatrix}\RR & \RR^\times\\\RR & \RR\end{bmatrix}\sqcup
\begin{bmatrix}\RR & \PP\smallsetminus \PP^2\\\RR & \RR\end{bmatrix}\sqcup
\cdots \sqcup
\begin{bmatrix}\RR & \PP^{\ell-1}\smallsetminus \PP^\ell\\ \RR & \RR\end{bmatrix}\sqcup B'_\ell.
\end{equation}
It follows from the Bruhat decomposition that the first set is equal to $B'wB'$, where $B'$ is the group of lower triangular matrices in $\mathcal{K}'$; working mod $B'_\ell$ we deduce it is the double coset $B'_\ell w B'_\ell$.  Similarly, the final set is the double coset represented by $I$.  Thus we are done if $\ell =1$.

Suppose now that $\ell\geq 2$.  Note that the remaining sets in \eqref{components} are the set differences  $B'_k\smallsetminus B'_{k+1}$ for each $1\leq k<\ell$.  
Let $g$ be an arbitrary element  of $B'_k\smallsetminus B'_{k+1}$.  It can be factored as 
$$
g = \begin{bmatrix}a&b\varpi^k \\ c&d\end{bmatrix} = \begin{bmatrix}a&0\\c&d-cba^{-1}\varpi^k\end{bmatrix}\begin{bmatrix}1&ba^{-1}\varpi^k\\0&1\end{bmatrix} \in B'_\ell \;g(k, ba^{-1})\;B'_\ell
$$
where $a,b,d\in \RR^\times$ and $c\in \RR$. If $h=\diag(u,u^{-1})$ for some $u\in \RR^\times$, then $hg(k,
\alpha)h^{-1}=g(k,u^2\alpha)$.  It follows that $g\in B_\ell' g(k,\alpha) B_\ell'$ for some $\alpha \in \cS$.
It remains to determine when two such elements yield the same double coset.  Suppose $\alpha,\alpha' \in \cS$ and there exist $h,h' 
\in B'_\ell$ such that $hg(k,\alpha)=g(k,\alpha')h'$.  Then modulo $\PP^\ell$ we have the matrix equality
$$
\begin{bmatrix}a&0\\c&d\end{bmatrix}\begin{bmatrix}1&\alpha\varpi^k\\0&1\end{bmatrix} \equiv
\begin{bmatrix}1&\alpha'\varpi^k\\0&1\end{bmatrix}\begin{bmatrix}a'&0\\c'&d'\end{bmatrix},
$$
(for some $a,d,a',d'\in \mathcal{R}^\times$, $c,c'\in \RR$), which yields
$$
a \equiv a'+ c'\alpha'\varpi^k, \quad a\alpha\varpi^k\equiv d'\alpha'\varpi^k, \quad c\equiv c', \quad d+c\alpha\varpi^k \equiv d',
$$
all modulo $\PP^\ell$.  Thus $a \alpha \equiv d'\alpha' \mod \PP^{\ell-k}$ and $d'\equiv d \mod \PP^k$.  Since $ad \equiv 1 \mod \PP^\ell$ we infer that
$$
a\alpha \equiv a^{-1}\alpha' \mod \PP^{\min\{k,\ell-k\}},
$$
implying $\alpha$ and $\alpha'$ are in the same equivalence class of $\cS$ modulo $\PP^{\min\{k,\ell-k\}}$.
It is direct to see that this necessary condition for equality of double cosets is also sufficient. 
\end{proof}

We now turn to the self-intertwining of our Mackey components $\sigma(\ell)=\Ind_{B_\ell'}^{\K'}\sigmaell$.  Applying Frobenius reciprocity and Mackey theory, one has
\begin{align}\label{E:MackeyFrobsigmaell}
    \notag\Hom_{\mathcal{K}'}(\Ind_{B'_{\ell}}^{\mathcal{K}'} \sigmaell,\Ind_{B'_{\ell}}^{\mathcal{K}'} \sigmaell) 
    &\cong \Hom_{B'_{\ell}} (\sigmaell, \Res_{B'_\ell}\Ind_{B'_{\ell}}^{\mathcal{K}'} \sigmaell) \\
    &\cong \bigoplus_{\gamma\in B'_\ell \backslash \mathcal{K}' / B'_\ell} 
    \Hom_{B'_{\ell}} (\sigmaell, \Ind_{\prescript{\gamma}{}{B_\ell'} \cap B'_\ell}^{B'_\ell} {}^\gamma (\sigmaell)) \\\notag
    &\cong \bigoplus_{\gamma\in B_\ell'\backslash \mathcal{K}'/B_\ell'} \Hom_{\prescript{\gamma}{}{B_\ell'} \cap B_\ell'}(\sigmaell, \prescript{\gamma g_\ell}{}{\sigma}). 
\end{align}

The dimensions of these spaces can be computed using characters.  Define $\chi(u) := \sum_{x\in \mathscr{F}^\times}\psi(xu)$ for the sum of the nontrivial additive characters of $\mathfrak{f}$, inflated to characters of $\RR$.

\begin{lemma}\label{L:charactersigmaell}
    For $\ell\geq 1$ the trace character $\chi_\ell$ of $\sigmaell$ is given on $a\in B_\ell'$ by
    $$
    \chi_\ell(a) = \chi_\ell\left(\begin{bmatrix} a_{11} &a_{12}\varpi^\ell\\ a_{21} & a_{22} \end{bmatrix}\right)
    =\begin{cases}
q-1 & \text{if $a_{11}\in 1+\PP$ and $a_{12}\in \PP$};\\
-1 & \text{if $a_{11}\in 1+\PP$ and $a_{12}\in\RR^\times$};\\
0 & \text{otherwise}.    
\end{cases}
$$
In particular, $\sigmaell$ is an irreducible representation of $B'_\ell$, but upon further restriction to the subgroup defined by $a_{11},a_{22}\in 1+\PP$, its character reduces as
$\chi_\ell(a) = \chi(a_{12})$. 
\end{lemma}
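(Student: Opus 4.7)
The plan is to reduce the lemma to a character computation for the restriction of a cuspidal representation of $\Sl(2,\mathfrak{f})$ to its upper-triangular Borel $U$, by factoring $\sigmaell$ through reduction modulo $\PP$. Concretely, for $a = \left[\begin{smallmatrix} a_{11} & a_{12}\varpi^\ell \\ a_{21} & a_{22}\end{smallmatrix}\right]\in B_\ell'$, one has $g_\ell^{-1} a g_\ell = \left[\begin{smallmatrix} a_{11} & a_{12} \\ a_{21}\varpi^\ell & a_{22}\end{smallmatrix}\right]\in \K'$, whose reduction modulo $\PP$ is the upper-triangular element $\bar k = \left[\begin{smallmatrix} \bar a_{11} & \bar a_{12} \\ 0 & \bar a_{11}^{-1}\end{smallmatrix}\right]\in U\subset \Sl(2,\mathfrak{f})$: the lower-left vanishes because $\ell\geq 1$, and $\det(a)=1$ forces $\bar a_{22}=\bar a_{11}^{-1}$. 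Thus $\sigmaell(a)=\sigma(\bar k)$, the induced map $B_\ell'\to U$ is surjective, and $\sigmaell|_{B_\ell'}$ is the pullback of $\sigma|_U$; both the character formula and the irreducibility claim therefore reduce to the corresponding statements about $\sigma|_U$.

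Next, I would read off the character of $\sigma|_U$ from Table~\ref{Table:Deligne} (whose last row $-R_{\mathsf T}^{\mathsf G}(\omega)$ restricts irreducibly to the cuspidal representations of $\Sl(2,\mathfrak{f})$ when $q$ is even), case by case on $\bar k$. If $\bar k=I$, the value is $\dim\sigma=q-1$. If $\bar a_{11}=1$ but $\bar a_{12}\neq 0$, then $\bar k$ is a nontrivial unipotent; since $q$ is even, $t\mapsto t^2$ is a bijection of $\mathfrak{f}^\times$, so conjugation by $\diag(t,t^{-1})$ shows all nontrivial unipotents form a single $\Sl(2,\mathfrak{f})$-class with character value $-\omega(1)=-1$. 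If $\bar a_{11}\neq 1$, then (again using $q$ even) $\bar a_{11}\neq \bar a_{11}^{-1}$, so $\bar k$ is unipotent-conjugate to the regular split semisimple $\diag(\bar a_{11},\bar a_{11}^{-1})$, on which the cuspidal character vanishes. These three cases assemble into the stated formula, and the final identity $\chi_\ell(a)=\chi(a_{12})$ on the subgroup with $a_{11},a_{22}\in 1+\PP$ (which maps into the unipotent radical $N$ of $U$) is immediate from matching $\sum_{\bar x\in\mathfrak{f}^\times}\bar\psi(\bar x \bar a_{12})$ with $\sum_{x\in\mathscr{F}^\times}\psi(xa_{12})$, using that $\psi$ has conductor $\PP$.

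For irreducibility of $\sigma|_U$, the cleanest route is Schur orthogonality on the finite quotient $U$, of order $q(q-1)$: the character just computed is supported on the subgroup $N$ of order $q$, with value $q-1$ at the identity and $-1$ on the $q-1$ other elements, so $\langle\chi_\ell,\chi_\ell\rangle_U=\tfrac{(q-1)^2+(q-1)}{q(q-1)}=1$. A more conceptual alternative is Clifford theory: $\sigma|_N$ contains each nontrivial character of $N$ with multiplicity one, and the split torus $T$ acts transitively on these (once more from the bijectivity of squaring on $\mathfrak{f}^\times$), so $\sigma|_U\cong\Indd_N^U\psi_0$ is irreducible of dimension $q-1=\dim\sigma$.

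The main delicacy throughout is keeping track of the characteristic-two-specific coincidences: bijectivity of $t\mapsto t^2$ on $\mathfrak{f}^\times$ is what collapses the nontrivial unipotent conjugacy classes (giving the uniform value $-1$) and what transitivises the torus action on $\widehat N\smallsetminus\{1\}$ (enabling the Clifford argument). This is precisely the feature that distinguishes the present situation from the $p$ odd case, where the cuspidal representations of $\Sl(2,\mathfrak{f})$ split further upon restriction from $\Gl(2,\mathfrak{f})$ and separate arguments would be needed for each constituent.
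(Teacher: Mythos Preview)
Your proof is correct and follows essentially the same approach as the paper: conjugate by $g_\ell^{-1}$ to land in the upper-triangular Borel of $\Sl(2,\mathfrak{f})$, read off the character values from Table~\ref{Table:Deligne}, and verify irreducibility via the Schur inner product $\tfrac{(q-1)^2+(q-1)}{q(q-1)}=1$. You supply more detail than the paper does on why the conjugacy classes behave as claimed (the bijectivity of squaring on $\mathfrak{f}^\times$ collapsing the unipotent classes and forcing $\bar a_{11}\neq \bar a_{11}^{-1}$), and your Clifford-theoretic alternative for irreducibility is a nice bonus, but the core argument is the same.
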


\begin{proof}
    Let $a=\begin{bmatrix} a_{11} &a_{12}\varpi^\ell\\ a_{21} & a_{22} \end{bmatrix}\in B_\ell'$.  Then $g_\ell^{-1}ag_\ell=\begin{bmatrix} a_{11} &a_{12}\\ a_{21}\varpi^\ell & a_{22} \end{bmatrix}\in \K'$ is upper triangular modulo $\K'_+$.  The values of $\chi_\ell(a) = \Tr(\sigma(g_\ell^{-1}ag_\ell))$ can now be read from Table~\ref{Table:Deligne}, noting that $\Sl(2,\mathfrak{f})$ has trivial center.  
    Writing temporarily $B$ for $B'_\ell/B'_\ell \cap \prescript{g_\ell}{}{\K_+}$, we compute 
$$
\dim(\Hom_{B'_\ell}(\sigmaell,\sigmaell))=\frac{1}{|B|}\sum_{g\in B}\chi_\sigma(g)\overline{\chi_\sigma(g)} = \frac{1}{(q-1)q}\left((q-1)^2+(q-1)\right)=1,
$$
whence $\sigmaell$ is an irreducible representation of $B'_\ell$.  For the final point, note that the unit upper-triangular subgroup of $\Sl(2,\mathfrak{f})$ is isomorphic to $\mathfrak{f}$ and for all $u\in \RR$
    $$
    \sum_{x\in \mathscr{F}^\times}\psi(x u) = \begin{cases}
q-1 & \text{if $u\in \PP$};\\
-1 & \text{if $u\in\RR^\times$}.
\end{cases}$$
\end{proof}

It follows from the independence of $\sigmaell$ of the choice of cuspidal representation $\sigma$ that for all $\ell\geq 1$, the Mackey components $\sigma(\ell)$ are also independent of the choice of $\sigma$.  This is an example of a general phenomenon analyzed in \cite{Nevins2014}.

Our key calculation is the following.

\begin{thm}\label{dimHomeSigmaEll}
    Suppose $\ell\geq 1$ and let $\gamma\in \dcst_\ell$ represent a double coset of $B_\ell'\backslash \mathcal{K}'/B_\ell'$. Then we have
$$
\dim(\Hom_{\prescript{\gamma}{}{B'_\ell}\cap B'_\ell}(\sigmaell, \prescript{\gamma g_\ell}{}{\sigma}) )
= \begin{cases}
1 & \text{if $\gamma = I$;}\\
q-1 & \text{if $\gamma=g(k,\alpha)$ where $\ell-k<2e$ is odd and $2k>\ell$;}\\
1 & \text{if $\gamma=g(k,\alpha)$ where $\ell - k = 2e$ and $2k>\ell$;}\\
0 & \text{otherwise}.
\end{cases}
$$
\end{thm}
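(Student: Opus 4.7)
The plan is to express each dimension as an inner product of characters $\chi_\ell$ of $\sigmaell$ and $\chi_{\prescript{\gamma g_\ell}{}{\sigma}}$ on the intersection $\intersec$, using the formula $\chi_{\prescript{\gamma g_\ell}{}{\sigma}}(b) = \chi_\sigma\bigl((\gamma g_\ell)^{-1}b(\gamma g_\ell)\bmod\K'_+\bigr)$ derived as in Lemma~\ref{L:charactersigmaell}. The case $\gamma = I$ follows at once from the irreducibility of $\sigmaell$. For $\gamma = w$, direct matrix multiplication shows that $g_\ell^{-1}bg_\ell$ and $(wg_\ell)^{-1}b(wg_\ell)$ reduce modulo $\K'_+$ to upper-triangular elements of $\Sl(2,\mathfrak{f})$ whose $(1,2)$-entries are $\bar y := \overline{b_{12}\varpi^{-\ell}}$ and $-\bar c' := -\overline{c'}$ respectively; on the common support $\bar a = 1$ these two variables range independently and freely over $\mathfrak{f}$, and the identity $\sum_{\bar y\in\mathfrak{f}}(-1 + q\mathbf{1}_{\bar y=0}) = 0$ forces the inner product to vanish.

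For $\gamma = g(k,\alpha)$ the first step is to compute the intersection explicitly. A direct matrix calculation shows $\intersec$ consists of those $b = \begin{bmatrix}a & b_{12} \\ c' & d\end{bmatrix}\in B'_\ell$ satisfying
\[
\alpha\varpi^k(d-a) - \alpha^2\varpi^{2k}c' \in \PP^\ell.
\]
When $2k > \ell$ this reduces to $d - a \in \PP^{\ell-k}$ (independent of $\alpha$ and $c'$); when $2k \leq \ell$ it instead ties $c'$ modulo $\PP^{\max\{\ell-2k,0\}}$ to $a - d$. In either regime, combining with $ad \equiv 1 \pmod{\PP^\ell}$ forces $\bar a^2 = 1$ in $\mathfrak{f}^\times$, hence $\bar a = 1$, and the entire $\intersec$ lies in the common support of the two characters. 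Moreover, comparing $(\gamma g_\ell)^{-1}b(\gamma g_\ell)$ and $g_\ell^{-1}bg_\ell$ modulo $\K'_+$ shows that both reduce to upper-unipotent elements of $\Sl(2,\mathfrak{f})$, with $(1,2)$-entries $\bar y$ and $\bar y + \bar v$ respectively, where
\[
\bar v := \overline{\bigl(\alpha\varpi^k(a-d) - \alpha^2\varpi^{2k}c'\bigr)\varpi^{-\ell}} \in \mathfrak{f}.
\]

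By Lemma~\ref{L:charactersigmaell} the integrand then equals $(-1 + q\mathbf{1}_{\bar y = 0})(-1 + q\mathbf{1}_{\bar y + \bar v = 0})$. Since $\bar v$ depends only on $(a,d,c')$ and not on $\bar y$, summing first over the free variable $\bar y \in \mathfrak{f}$ collapses the inner product to
\[
\dim\Hom_{\intersec}(\sigmaell, \prescript{\gamma g_\ell}{}{\sigma}) = q\,P(\bar v = 0) - 1,
\]
where $P$ denotes the probability as the remaining entries of $b$ vary uniformly in a suitable finite quotient of $\intersec$. The case analysis then rests on the distribution of $\bar v$. When $2k \leq \ell$, an additional free parameter (the top coefficient of the coset of $c'$ when $2k < \ell$, or of $a - d$ when $2k = \ell$) appears freely in $\bar v$, forcing $\bar v$ uniform on $\mathfrak{f}$ and yielding dimension $0$. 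When $2k > \ell$, one has $\bar v = \bar\alpha \cdot \overline{(a-d)\varpi^{k-\ell}}$, and Corollary~\ref{C: imageofaminusd} with $\delta = \ell - k$ pins down the distribution exactly: $\bar v \equiv 0$ when $\delta < 2e$ is odd (so $P = 1$, dimension $q - 1$); $\bar v$ ranges over $\bar\alpha \cdot \mathscr{M}$ via $\bar a_e \mapsto \iota \bar a_e + \bar a_e^2$ with fibers of size $2$ when $\delta = 2e$ (so $P = 2/q$, dimension $1$); and $\bar v$ is uniform on $\mathfrak{f}$ in the remaining cases (so $P = 1/q$, dimension $0$).

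The main obstacle I anticipate is the meticulous bookkeeping required to parametrize $\intersec$ and isolate $\bar v$ as the single arithmetic invariant controlling the inner product. In particular, the $2k \leq \ell$ case demands unpacking the intersection constraint by hand to identify the appropriate free parameter, and the $2k > \ell$ case further requires invoking Proposition~\ref{L:howcloseadare} (via Corollary~\ref{C: imageofaminusd}) to translate the squaring arithmetic of Section~\ref{S:3} into the claimed dimension formulas.
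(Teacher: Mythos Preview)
Your proposal is correct and follows the same route as the paper: compute the character inner product on $\intersec$, isolate the single residue-field invariant $\bar v$ governing it, and analyse the cases $2k\leq\ell$ and $2k>\ell$ via Proposition~\ref{L:howcloseadare} and Corollary~\ref{C: imageofaminusd}. Your formula $\dim = qP(\bar v=0)-1$ is a tidy uniform repackaging of what the paper carries out by separate character sums in each regime (especially the $\delta=2e$ computation), but the underlying argument is the same. One small slip: in the $2k=\ell$ subcase the free parameter forcing $\bar v$ uniform is still $\bar c'$ (now entirely unconstrained and contributing $-\bar\alpha^2\bar c'$ to $\bar v$), not the top coefficient of $a-d$, which remains constrained by $ad\equiv 1\pmod{\PP^\ell}$; this does not affect your conclusion.
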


\begin{proof}
    Let $\gamma \in \dcst_\ell$.  Once and for all, we write 
 \begin{equation}\label{form}
 a=\begin{bmatrix}
     a_{11} & a_{12}\varpi^\ell\\ a_{21} & a_{22}
 \end{bmatrix}
 \in \intersec,
 \end{equation}
for some  $a_{ij}\in\RR$,  to represent an arbitrary element of this intersection.  

When $\gamma = I$, the intertwining number is one, by Lemma~\ref{L:charactersigmaell}. 
If $\gamma = w$, then $D_\ell:={}^wB'_\ell \cap B'_\ell$ consists of matrices that are diagonal modulo $\PP^\ell$.  Thus for any $a\in D_\ell$ as in \eqref{form}, there is some $a'_{21}\in \mathcal{R}$ such that 
$a_{21}=a'_{21}\varpi^\ell$. We compute
$$
{}^w\sigmaell(a) = \sigma(\gell^{-1}w^{-1}aw\gell)=\sigma\left(\begin{bmatrix}
    a_{22} & -a'_{21}\\ -a_{12}\varpi^{2\ell} & a_{11}\end{bmatrix}\right) = \sigma\left(\begin{bmatrix}a_{22} & a'_{21}\\ 0 & a_{11}\end{bmatrix}\right)
$$
whose value is independent of $a_{12}$.  From Lemma~\ref{L:charactersigmaell} we may infer that the restrictions to $D_\ell$ of $\sigmaell$ and of $\prescript{wg_\ell}{}{\sigma}$ are each irreducible; since $\sigmaell$ varies with the value of $a_{12}$ and $\prescript{wg_\ell}{}{\sigma}$ does not, they cannot intertwine.  Thus $\Hom_{{}^wB'_\ell \cap B'_\ell}(\sigmaell,\prescript{wg_\ell}{}{\sigma})=\{0\}$.

It remains to consider double coset representatives of the form $\gamma=g(k,\alpha)$ for some $1\leq k < \ell$ and $\alpha \in \cS_{\ell,k}$.  
By this token, we compute, for $a$ as in \eqref{form}, that
\begin{equation}\label{genElementTwistBlopp}
    b:=\gamma a\gamma^{-1} = \begin{bmatrix}
        a_{11} + \alpha a_{21}\varpi^k & (a_{22}-a_{11})\alpha \varpi^{k} - \alpha^2a_{21}\varpi^{2k}+a_{12}\varpi^{\ell}\\
        a_{21}\varpi^\ell & a_{22}-\alpha a_{21}\varpi^k
    \end{bmatrix}. 
\end{equation}

Thus $b$ is an element of $B_\ell'$ if and only if $(a_{22}-a_{11})\alpha \varpi^{k} - \alpha^2a_{21}\varpi^{2k}\in \PP^\ell$.  Since $k\geq 1$ we infer $a_{11}\equiv a_{22} \mod \PP^{\min\{k,\ell-k\}}$ and since $\det(a)=1$ we must have $a_{11}\equiv a_{22}\mod \PP$.  
It follows that the image of
$\prescript{g_{\ell}^{-1}}{}{(\prescript{\gamma}{}{B_\ell'} \cap B_\ell')}$ 
in $\mathcal{K}'/\mathcal{K}'_+\cong \mathrm{SL}_2(\mathfrak{f})$ is contained in the unit upper triangular subgroup $U$.  

By Lemma~\ref{L:charactersigmaell} we have $\chi_\ell(a)=\chi(a_{12})$ whereas
\begin{equation}\label{E:charslisted}
\prescript{\gamma}{}{\chi_\ell}(a)=\chi_\ell(b)=\chi((a_{22}-a_{11})\alpha \varpi^{k-\ell} - \alpha^2a_{21}\varpi^{2k-\ell}+a_{12}).
\end{equation}

Suppose first that $k\leq \ell -k$, so that $2k\leq \ell$.  Then the matrix $b$ as in \eqref{genElementTwistBlopp} lies in $B_\ell'$ only if  $a_{11}\equiv a_{22}\mod \PP^{k}$, whence $a_{11}^2 \equiv a_{11}a_{22}\equiv 1 \mod \PP^k$. 
We claim that for each choice of triple $(u,a_{12},a_{11})$ such that $u\in \mathscr{F}$, $a_{12}\in\RR$ and $a_{11}\in 1+\PP$ such that $a_{11}^2\in 1+\PP^k$, there exist unique $a_{21}\in \RR$ and $a_{22} \in 1+\PP$ such that 
\begin{equation}\label{E:system}
    (a_{22} - a_{11})\alpha\varpi^k - \alpha^2 a_{21}\varpi^{2k} = u\varpi^\ell, \quad \text{and}\quad a_{11}a_{22} - a_{12}a_{21}\varpi^\ell = 1.
\end{equation}
Indeed, this is linear system in the variables $a_{22}$ and $a_{21}$, yielding the unique solution
$$
    a_{22}= \frac{a_{12}u\varpi^{2\ell}+a_{11}a_{12}\alpha \varpi^{k+\ell} - \alpha^2\varpi^{2k}}{a_{12}\alpha \varpi^{k+\ell}-a_{11}\alpha^2\varpi^{2k}}\quad \text{and}\quad
    a_{21}= \frac{a_{11}u\varpi^\ell + a_{11}^2\alpha \varpi^k - \alpha \varpi^k}{a_{12}\alpha \varpi^{k+\ell} - \alpha^2a_{11}\varpi^{2k}}.
$$
The first equation yields $a_{22}\in 1+\PP$ since $a_{11}\in 1+\PP$.  The second equation yields $a_{21}\in \RR$ if and only if $a_{11}^2-1\in \PP^{k}$. Thus the intersection $\prescript{\gamma}{}{B_\ell'} \cap B_\ell'$ is parametrized by these triples.
Since 
$$
\chi_\ell(a) = \chi(a_{12}) \quad \text{and} \quad \prescript{\gamma}{}{\chi_\ell}(a)=\chi_\ell(b)=\chi(u+a_{12}).
$$
and $u,a_{12}$ are independent, it follows that the inner product of these characters is $0$, yielding as above that $\Hom_{\prescript{\gamma}{}{B_\ell'} \cap B_\ell'}(\sigmaell,\prescript{\gamma}{}{\sigmaell})=\{0\}.$

We assume from now on that $k>\ell -k$, which is equivalent to  $2k>\ell$ and $\ell-k<\ell/2$.  Thus  for all $b=\gamma a \gamma^{-1}\in B_\ell'$ the expression \eqref{E:charslisted} simplifies to
\begin{equation}\label{E:chiellhardcase}
\prescript{\gamma}{}{\chi_\ell}(a)=\chi_\ell(b)=\chi((a_{22}-a_{11})\alpha \varpi^{k-\ell} +a_{12}).
\end{equation}
Define the subgroup
$$
\Gamma_{\ell,k} = \begin{bmatrix}
    1 + \PP^{\ell-k+1} & \PP^{\ell+1}\\
    \RR & 1 + \PP^{\ell-k+1}
\end{bmatrix}.
$$ 
Its intersection with $\intersec$ is a normal subgroup.  If we set $M = (\prescript{\gamma}{}{B_\ell'} \cap B_\ell')/(\Gamma_{\ell,k}\cap \prescript{\gamma}{}{B_\ell'} \cap B_\ell')$, then both $\sigmaell$ and $\prescript{\gamma g_\ell}{}{\sigma}$ factor through to representations of the finite group $M$.

Now the conditions on $a$ yielding $a\in M$, or equivalently, for the matrix $b$ as in \eqref{genElementTwistBlopp} to lie in  $B_\ell'$,  become  modulo $\PP^\ell$ the following \emph{quadratic} system of equations in the variables $a_{11}$ and $a_{22}$:
$$
a_{11}\equiv a_{22} \mod \PP^{\ell-k} \quad \text{and}\quad a_{11}a_{22}\equiv 1 \mod \PP^{\ell}.
$$
By Corollary~\ref{C: imageofaminusd}, applied with $\delta = \ell - k < \ell$, $a=a_{11}$ and $d=a_{22}$, elements of $M$ are parametrized by the independent pair of coefficients $(a_{11},a_{12})$.  Let us address each case in turn, in the order outlined in the corollary.

Suppose first that either $\ell-k \geq 2e+1$ or $\ell - k$ is even and strictly less than $2e$.  In either of these cases, Corollary~\ref{C: imageofaminusd} implies that the map $a_{11}\in 1+\PP^{\min\{e,\lceil \delta/2\rceil\}} \mapsto (a_{11}-a_{22})\alpha \varpi^{k-\ell}$ is surjective onto $\RR/\PP$.  As above, we conclude using \eqref{E:chiellhardcase} that the values of the characters $\prescript{\gamma}{}{\chi_\ell}(a)$ and $\chi_\ell(a)$ are independent, whence $\Hom_{\intersec}(\sigmaell, \prescript{\gamma g_\ell}{}{\sigma})=\{0\}.$

Suppose next that $\ell-k<2e$ and $\ell-k$ is odd.  By Proposition~\ref{L:howcloseadare}, we have that $a,d\in 1+\PP^{\lceil (\ell-k)/2 \rceil}$ and $a-d \in \PP^{\ell-k+1}$.  We infer that $\prescript{\gamma}{}{\chi_\ell}(a) = \chi_\ell(a)=\chi(a_{12})$
on $M$, so that the intertwining is
$$
\dim\Hom_{\intersec}(\sigmaell, \prescript{\gamma g_\ell}{}{\sigma}) =
\frac{1}{|M|}\sum_{a_{11},a_{12}}\chi(a_{12})^2 = \frac{1}{q}((q-1)^2+(q-1))=q-1.
$$

We finally proceed to the case that $\ell - k = 2e < k.$   By Corollary~\ref{C: imageofaminusd}, the map $a_{11}\in 1+\PP^{e} \mapsto (a_{11}-a_{22})+\PP$ has image equal to the subgroup $\mathscr{M}=\{\iota a_e+a_e^2 \mid a_e\in \RR/\PP\}$.  Note that in this case, the pair $(a_{11},a_{12})$ runs over the set $(1+\PP^e)/(1+\PP^{2e+1}) \times \RR/\PP$.
Write $\rho(x)=x^2+\iota x$ and let $a_e\in \mathscr{F}$ be shorthand to denote the coefficient of $\varpi^e$ in $a_{11}$ (mod $\PP$).  Then by \eqref{E:chiellhardcase} we have $\prescript{\gamma}{}{\chi_\ell}(a)=\chi(\rho(a_e)\alpha+a_{12})$ so that
\begin{align*}
    \dim\Hom_{\intersec}(\sigmaell, \prescript{\gamma g_\ell}{}{\sigma})&=\frac{1}{|M|}\sum_{\substack{a_{11}\in (1+\PP^e)/(1+\PP^{2e+1})\\ a_{12}\in \RR/\PP}}\chi(a_{12})\chi(a_{12}+\rho(a_e)\alpha)\\
    &= \frac{1}{q^2}\sum_{a_e,a_{12}\in \mathscr{F}} \chi(a_{12})\chi(a_{12}+\rho(a_e)\alpha).
\end{align*}
We compute the sum as follows.  When $a_e\in \ker(\rho)$, which is a subgroup of order $2$, we have
$$
\sum_{a_{12}\in \mathscr{F}} \chi(a_{12})\chi(a_{12}+0)= (q-1)^2+ (q-1) = q^2-q.
$$
For the remaining $q-2$ choices of $a_e$, there are two choices of $a_{12}$ for which one of the two terms in the sum is $q-1$ and the other is $-1$.  The remaining choices of $a_{12}$ give $(-1)^2$.  This yields
$$
\sum_{a_{12}\in \mathscr{F}} \chi(a_{12})\chi(a_{12}+\rho(a_e)\alpha) = 2(1-q)+(q-2)=-q.
$$
Thus altogether we have $\dim\Hom_{\intersec}(\sigmaell, \prescript{\gamma g_\ell}{}{\sigma})=\frac{1}{q^2}\left(2(q^2-q)+(q-2)(-q)\right) = 1$, as required.
\end{proof}

Setting $i=\ell-k$, we deduce that the set of double cosets supporting intertwining of $\sigma(\ell)$ is
\begin{equation}\label{E:dcstellsup}
\dcst_{\ell, sup}
=\{I\} 
\cup \{g(\ell-i,\alpha) \mid i<\ell/2, \; \text{ and either $i<2e$ is odd or $i=2e$, and } \alpha\in \cS_{i}\}.
\end{equation}

\begin{cor}\label{C:Sigmaell}
    Let $\ell \geq 1$.  
    Then 
    $$
    \dim \End_{\K'}(\sigma(\ell)) = |\cS_{\lceil \ell/2 \rceil}| = \begin{cases}
    q^{\lfloor (\ell+1)/4\rfloor} & \text{if $\ell\leq 4e$;}\\
    2q^e & \text{if $\ell \geq 4e+1$.}
    \end{cases}
$$
\end{cor}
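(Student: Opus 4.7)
The plan is to combine the Frobenius--Mackey decomposition \eqref{E:MackeyFrobsigmaell} with Proposition~\ref{Prop:doublecosets6.2} and Theorem~\ref{dimHomeSigmaEll} to write $\dim\End_{\K'}(\sigma(\ell))$ as a finite sum over the supporting double cosets \eqref{E:dcstellsup}, and then to evaluate it using the cardinalities from Lemma~\ref{valueOfSellK}. Concretely, the supporting cosets are: $\gamma=I$ (contributing $1$); $\gamma=g(\ell-i,\alpha)$ for odd $i$ with $i<\min(2e,\ell/2)$ (contributing $q-1$ each); and, when $2e<\ell/2$, $\gamma=g(\ell-2e,\alpha)$ (contributing $1$ each). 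In each case $\alpha$ ranges over $\cS_{\ell,\ell-i}$, which by Lemma~\ref{valueOfSellK} equals $\cS_i$ (since $i\leq\ell/2$) and hence has cardinality $q^{\lfloor i/2\rfloor}$.

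Putting these together yields
$$
\dim\End_{\K'}(\sigma(\ell)) \;=\; 1 \;+\; (q-1)\sum_{\substack{j\geq 0 \\ 2j+1<\min(2e,\,\ell/2)}} q^{j} \;+\; \begin{cases} q^{e} & \text{if } \ell\geq 4e+1, \\ 0 & \text{otherwise,}\end{cases}
$$
after reparametrizing odd $i$ as $2j+1$. I would then split into the two regimes of the corollary. When $\ell\leq 4e$ the $q^e$ term is absent and the binding cap is $2j+1<\ell/2$; letting $J$ denote the largest admissible $j$, the geometric series telescopes to $1+(q^{J+1}-1)=q^{J+1}$, and a routine off-by-one check across the four residue classes of $\ell$ modulo $4$ gives $J+1=\lfloor(\ell+1)/4\rfloor$, which by Lemma~\ref{valueOfSellK} is exactly the exponent of $q$ in $|\cS_{\lceil\ell/2\rceil}|$. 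When $\ell\geq 4e+1$ the odd-index sum runs over the full range $j\in\{0,1,\ldots,e-1\}$, the extra $q^e$ is present, and the total collapses to $1+(q^e-1)+q^e=2q^e$, which equals $|\cS_{\lceil\ell/2\rceil}|=|\cS|$ by Lemma~\ref{Lemma:Cassel}.

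The hard work is already done in Theorem~\ref{dimHomeSigmaEll}, so there is no substantive obstacle here; the remaining effort is purely combinatorial. The only places that deserve care are the bookkeeping of whether $\ell/2$ or $2e$ supplies the binding constraint on the odd-$i$ range, the verification of the identity $J+1=\lfloor(\ell+1)/4\rfloor = \lfloor\lceil\ell/2\rceil/2\rfloor$, and the boundary check at $\ell=4e$ (where $\ell\leq 4e$) versus $\ell=4e+1$ (where the cases switch), to confirm that both formulas agree on the transition.
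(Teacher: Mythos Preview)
Your proposal is correct and follows essentially the same approach as the paper: both use the Mackey--Frobenius decomposition \eqref{E:MackeyFrobsigmaell}, sum the intertwining numbers from Theorem~\ref{dimHomeSigmaEll} over the supporting cosets \eqref{E:dcstellsup}, substitute $|\cS_i|=q^{\lfloor i/2\rfloor}$ from Lemma~\ref{valueOfSellK}, and collapse the resulting geometric series in the two regimes $\ell\leq 4e$ and $\ell\geq 4e+1$. The paper's proof differs only in presentation, handling $\ell\in\{1,2\}$ separately and writing the largest admissible $j$ explicitly as $z=\lfloor(\ell+1)/4\rfloor-1$ rather than leaving it as a ``routine off-by-one check.''
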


\begin{proof}
Since $\lfloor (\lceil \ell/2\rceil)/2 \rfloor = \lfloor (\ell+1)/4\rfloor$ and  $\lceil \ell/2 \rceil > 2e$ if and only if $\ell\geq 4e+1$, the formula for $|\cS_{\lceil \ell/2 \rceil}|$ follows from Lemma~\ref{valueOfSellK}.
Now we compute $\Sigma(\ell):=\dim \End_{\K'}(\sigma(\ell))$.
If $\ell\in \{1,2\}$, then $\dcst_{\ell,sup}=\{I\}$ so $\Sigma(\ell)=1=q^{0}$, as required.

When $\ell\geq 3$,  Theorem~\ref{dimHomeSigmaEll} yields that $\dim \Hom_{\intersec}(\sigmaell,\prescript{\gamma g_\ell}{}{\sigma})=q-1$ for each  $\gamma=g(\ell-i,\alpha)$ such that $i$ is odd and satisfies $1\leq i <\min\{2e,\ell/2\}$, and $\alpha\in \cS_{\ell,\ell-i}=\cS_i$.  Additionally, if $\ell\geq 4e+1$, then with $\gamma=g(\ell-2e,\alpha)$, for any $\alpha \in \cS_{\ell,\ell-2e}=\cS_{2e}$, we have $\dim \Hom_{\intersec}(\sigmaell,\prescript{\gamma g_\ell}{}{\sigma})=1$.

Suppose first that $\ell>4e$, so that $2e<\ell/2$.  Using \eqref{E:MackeyFrobsigmaell} and Lemma~\ref{valueOfSellK} we compute 
\begin{align*}
\Sigma(\ell) &= \sum_{\gamma \in \dcst_\ell}\dim \Hom_{\intersec}(\sigmaell, \prescript{\gamma g_\ell}{}{\sigma})\\
&= 1 + \sum_{j=0}^{e-1}(q-1)|\cS_{\ell,2j+1}| + |\cS_{\ell,2e}|\\
&= 1+ \sum_{j=0}^{e-1}(q-1)q^j + q^e = 2q^e.
\end{align*}
In this case we have $e<\infty$ and we deduce that $\Sigma(\ell)=|\cS|=|\cS_{\lceil \ell/2\rceil}|$.

Now suppose that $\ell \leq 4e$, so that $\ell/2\leq 2e$.  The greatest odd integer strictly less than $\ell/2$ is $2z+1$ where $z=\lfloor (\ell+1)/4\rfloor-1$.  Thus we find as above that
$$
\Sigma(\ell) = 1 + \sum_{j=0}^z(q-1)|\cS_{2j+1}|= 1 + (q-1)\sum_{j=0}^zq^{j}= q^{\lfloor (\ell+1)/4\rfloor}
$$
as required.
\end{proof}   

In contrast, when $p$ is odd, the same strategy of proof specializes to show that $\dim(\End(\sigma(\ell)))=2$ for all Deligne--Lusztig cuspidal representations $\sigma$ and $\ell\geq 1$ \cite[\S5]{Nev}.  In that case the expression for the character $\chi_\ell$ is slightly more complex, as it depends on the central character of $\sigma$, but the double coset space $\dcst_\ell$ is much simpler since $\cS_{\ell,k}=\cS=\{1,\varepsilon\}$ for some nonsquare $\varepsilon\in \RR^\times$ for all $1\leq k <\ell$.

In Section~\ref{S:7}, we will realize the complete decomposition into irreducible subrepresentations of each $\sigma(\ell)$.

\section{Interlude: inferring an explicit multiplicity-free result when  \texorpdfstring{$q=2$}{q=2}}\label{S:6}
Corollary~\ref{C:Sigmaell} establishes the dimension of $\End_{\K'}\sigma(\ell)$ for each $\ell\geq 1$.  
When the residue field is $\mathbb{F}_2$, the inducing representation $\sigmaell=:\vartheta_\ell$ is a character and the representation space of $\sigma(\ell)$ is simply 
$$
\{h:\K'\to \mathbb{C}\mid h(bkk')=\vartheta(b)h(k) \forall b\in B_\ell', k'\in \K'_+, k\in \K'\}.
$$
In this section, we illustrate in this special case how to leverage the results of Section~\ref{Sec: Branching Rules} to prove that this algebra is abelian and hence that the decomposition is multiplicity-free.  

A restatement of Mackey theory is that the endomorphism algebra of self-intertwining operators on $\sigma(\ell)=\Ind_{B_\ell'}^{\K'}\vartheta_\ell$  is isomorphic to the Hecke algebra 
$$
\mathscr{H}=\mathscr{H}(B_\ell' \backslash {\K}' / B_\ell', \vartheta_\ell) = \left\{\mathcal{F}:{\K}'\to \mathbb{C} \mid \mathcal{F}(b_1kb_2) = \vartheta_\ell(b_1)\mathcal{F}(g)\vartheta_\ell(b_2)\, \forall b_1,b_2\in B_\ell', g\in {\K}'\right\},
$$
which is an algebra under convolution, denoted  $*$.
The isomorphism is given by sending $\mathcal{F}\in \mathscr{H}$ to the intertwining operator in $\End_{\K'}\sigma(\ell)$ given by $h \mapsto \mathcal{F}*h$ 
for all $h:\K'\to \mathbb{C} \in \Ind_{B_\ell'}^{\K'}\vartheta_\ell$.  The double cosets of $B_\ell' \backslash {\K}' / B_\ell'$ that support nonzero elements of $\mathscr{H}$ 
are precisely those parametrized by $\gamma$ for which $\Hom_{\intersec}(\vartheta_\ell,\prescript{\gamma}{}{\vartheta_\ell})\neq 0$, that is, 
for $\gamma$ in the set $\dcst_{\ell, sup}$ of \eqref{E:dcstellsup}.
For each such $\gamma$ let $\mathcal{F}_\gamma\in \mathscr{H}$ be the function supported on $B_\ell'\gamma B_\ell'$ such that $\mathcal{F}_\gamma(\gamma)=1$. 
Then $\{\mathcal{F}_\gamma \mid \gamma\in \dcst_{\ell,sup}\}$ is a basis for $\mathscr{H}$.
We wish to determine the action of these operators on a basis for the representation space of $\sigma(\ell)$.

\begin{lemma}\label{L:Cosetrepresentatives}
    A set of coset representatives for $B_\ell' \backslash \K'$ is $\Sigma :=\Sigma_0\cup \Sigma_w$ where
$$
\Sigma_0:=\left\{ u_\beta = \begin{bmatrix} 1 & \beta \\ 0 & 1\end{bmatrix} \;\middle|\; \beta\in \RR/\PP^{\ell}\right\}, \quad \text{and} \quad  \Sigma_w:=
\{ u_\beta w \mid \beta \in \PP/\PP^{\ell}\}
$$
and $w=\left[\begin{smallmatrix} 0&1\\-1&0\end{smallmatrix}\right]$ is the Weyl element.
\end{lemma}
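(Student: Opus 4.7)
The plan is to show that $|\Sigma| = [\K':B_\ell']$ and that the elements of $\Sigma$ represent pairwise distinct cosets; these two facts together force $\Sigma$ to be a complete set of representatives.

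For the index, I would reduce to the finite quotient. Since $\K_\ell' \subseteq B_\ell'$, one has $[\K':B_\ell'] = [\Sl(2,\RR/\PP^\ell) : \bar B]$, where $\bar B$ is the image of $B_\ell'$ and consists of the lower triangular matrices $\begin{bmatrix} a & 0 \\ c & a^{-1} \end{bmatrix}$ with $a \in (\RR/\PP^\ell)^\times$ and $c \in \RR/\PP^\ell$. Standard counting gives $|\Sl(2,\RR/\PP^\ell)| = q^{3\ell-2}(q^2-1)$ and $|\bar B| = (q-1)q^{2\ell-1}$, so the index equals $q^{\ell-1}(q+1) = q^\ell + q^{\ell-1} = |\Sigma_0| + |\Sigma_w|$.

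For pairwise distinctness, the identities $u_\beta u_{\beta'}^{-1} = u_{\beta - \beta'}$ and $(u_\beta w)(u_{\beta'} w)^{-1} = u_{\beta - \beta'}$ show immediately that $\Sigma_0$ and $\Sigma_w$ each represent distinct cosets internally, since membership in $B_\ell'$ forces the $(1,2)$-entry to lie in $\PP^\ell$. To exclude coincidences across $\Sigma_0$ and $\Sigma_w$, I would compute
\[
u_\beta (u_{\beta'} w)^{-1} = \begin{bmatrix} 1 & \beta \\ 0 & 1 \end{bmatrix}\begin{bmatrix} 0 & -1 \\ 1 & -\beta' \end{bmatrix} = \begin{bmatrix} \beta & -1 - \beta\beta' \\ 1 & -\beta' \end{bmatrix},
\]
whose $(1,2)$-entry reduces to $-1$ modulo $\PP$ (since $\beta' \in \PP$ forces $\beta\beta' \in \PP$), so it is a unit and this matrix never lies in $B_\ell'$.

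Combining the cardinality count with pairwise distinctness would complete the argument. There is no substantive obstacle here; the proof is a routine bookkeeping exercise relying on the standard order formulas for $\Sl(2,\RR/\PP^\ell)$ and on direct matrix multiplication. If one preferred an explicit reduction instead of a counting argument, one could alternatively establish surjectivity by row-reducing an arbitrary $g = \begin{bmatrix} a & b \\ c & d\end{bmatrix} \in \K'$ on the left by a suitable element of $B \cap \K'$, splitting into the cases $a \in \RR^\times$ (giving $g \in B_\ell' u_{a^{-1}b}$) and $a \in \PP$ (in which case $\det g = 1$ forces $b,c \in \RR^\times$ and $g \in B_\ell' u_{-b^{-1}a} w$ with $-b^{-1}a \in \PP$).
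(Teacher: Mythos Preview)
Your proposal is correct and follows essentially the same approach as the paper's proof: both argue by showing that the elements of $\Sigma$ represent pairwise distinct cosets and then matching $|\Sigma|$ against the index $[\K':B_\ell']=(q+1)q^{\ell-1}$. The only cosmetic difference is that the paper computes the index as $[\K':B_1'][B_1':B_\ell']$ rather than via $|\Sl(2,\RR/\PP^\ell)|/|\bar B|$, and leaves the distinctness as ``a quick matrix calculation'' where you write it out explicitly.
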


\begin{proof}
It is a quick matrix calculation to deduce that these elements represent distinct cosets.  We compute $[\K':B_\ell']=[\K':B_1'][B_1':B'_\ell] = (q+1)q^{\ell-1}$, where the first term is the order of $\mathrm{SL}(2,\mathfrak{f})/B$ and the second is equal to the index of the corresponding quotient of Lie algebras. It follows that this set is complete.
\end{proof}

From the lemma we infer that a basis for the space of $\sigma(\ell)$ is the set $\{h_{a}\mid a\in \Sigma\}$ of functions supported on the right cosets $B_\ell'a$ and satisfying $h_a(a)=1$.

\begin{prop}\label{actionOnEndSpaceQ2}
    Fix a Haar measure on the compact group $\K'$. 
    Then $\mathcal{F}_I = \vol(B_\ell')I$ and for every $g(k,\alpha)\in \dcst_{\ell,sup}$ we have 
    $$
    \mathcal{F}_{g(k,\alpha)}*h_{u_\beta}\in \mathbb{R}h_{u_{\beta'}}, \quad \text{and} \quad 
    \mathcal{F}_{g(k,\alpha)}*h_{u_\beta w}\in \mathbb{R}h_{u_{\beta'}w}
    $$
    where $\beta'=\beta+\alpha\varpi^k$, for all $u_\beta\in \Sigma_0$ and $u_\beta w\in \Sigma_1$.
\end{prop}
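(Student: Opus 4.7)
The plan is to compute $\mathcal{F}_\gamma * h$ directly from the convolution definition, treating the trivial and nontrivial double cosets separately. For $\gamma = I$, the function $\mathcal{F}_I$ is the $\vartheta_\ell$-twisted characteristic function of $B_\ell'$, and a one-line calculation using the left-$B_\ell'$-equivariance of any $h$ in the induced representation gives
\[
(\mathcal{F}_I * h)(g) = \int_{B_\ell'}\vartheta_\ell(y)\vartheta_\ell(y)^{-1}h(g)\,dy = \vol(B_\ell')\,h(g),
\]
so $\mathcal{F}_I = \vol(B_\ell')\,I$ on the entire induced representation.

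For $\gamma = g(k,\alpha) \in \dcst_{\ell, sup}$ I would parametrise the convolution in the form
\[
(\mathcal{F}_\gamma * h_{u_\beta})(u_{\beta_0}) = \int_{B_\ell'} \mathcal{F}_\gamma(u_{\beta_0-\beta}\,b^{-1})\, \vartheta_\ell(b)\, db,
\]
which vanishes unless $u_{\beta_0 - \beta}$ lies in the double coset $B_\ell'\gamma B_\ell'$. When it does, fixing any decomposition $u_{\beta_0 - \beta} = b_1 \gamma b_2$ (well-defined by Theorem~\ref{dimHomeSigmaEll}) collapses the integrand to the constant $\vartheta_\ell(b_1)\vartheta_\ell(b_2)$, and the integration yields $\vol(B_\ell')\vartheta_\ell(b_1)\vartheta_\ell(b_2)$. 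The corresponding calculation for $h_{u_\beta w}$ is essentially identical, using $\gamma\, u_\beta w = u_{\beta+\alpha\varpi^k}\, w$ to carry the argument over to $\Sigma_w$.

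The heart of the proof is to show that, in the $q=2$ setting and for $\gamma \in \dcst_{\ell, sup}$, the only right coset in $\Sigma_0$ supporting a nonzero value is $B_\ell'\, u_{\beta + \alpha\varpi^k}$. By Proposition~\ref{Prop:doublecosets6.2} this reduces to showing that $\alpha\varpi^k(\RR^\times)^2(1+\PP^{\ell-k})$ collapses modulo $\PP^\ell$ to the single coset $\alpha\varpi^k + \PP^\ell$; this is the main obstacle, and it requires the squaring refinements of Proposition~\ref{L:howcloseadare} (applied with $\delta = \ell-k$) to pin down the image of $(\RR^\times)^2$ inside $1+\PP^{\ell-k}$ in each case of $\ell-k$ appearing in $\dcst_{\ell, sup}$. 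Once this single-coset collapse is in hand, evaluating at $u_{\beta + \alpha\varpi^k}$ via the trivial factorisation $u_{\alpha\varpi^k} = I \cdot \gamma \cdot I$ gives $\vartheta_\ell(I)^2 \vol(B_\ell') = \vol(B_\ell')$, so $\mathcal{F}_\gamma * h_{u_\beta} = \vol(B_\ell')\, h_{u_{\beta + \alpha\varpi^k}} \in \mathbb{R}\, h_{u_{\beta + \alpha\varpi^k}}$, as required.
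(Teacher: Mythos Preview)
Your approach is the same as the paper's: compute the convolution directly and reduce to determining which $a' \in \Sigma$ satisfy $a'a^{-1} \in B'_\ell\, g(k,\alpha)\, B'_\ell$. You actually go further than the paper in correctly isolating the crux --- the claimed collapse of $\alpha\varpi^k(\RR^\times)^2(1+\PP^{\ell-k})$ to the single additive coset $\alpha\varpi^k + \PP^\ell$ --- whereas the paper simply asserts the equivalence ``$\beta'-\beta \equiv \alpha\varpi^k \bmod \PP^\ell$'' without justification. The problem is that this collapse is \emph{false} once $e \geq 2$. Take $F = \Fpt$, $\ell = 7$, $k = 4$, $\alpha = 1$ (so $\ell-k = 3$ is odd and $g(4,1) \in \dcst_{7,sup}$): then $(1+t)^2 = 1+t^2 \in (\RR^\times)^2 \smallsetminus (1+\PP^3)$, so both $u_{t^4}$ and $u_{t^4+t^6}$ lie in $B'_7\,g(4,1)\,B'_7$, and since $\vartheta_\ell$ takes only the values $\pm 1$ when $q=2$, the convolution $\mathcal{F}_{g(4,1)} * h_{u_0}$ is nonzero at both basis vectors. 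Proposition~\ref{L:howcloseadare} does not help here: it controls $a-d$ under the constraint $ad\equiv 1$, not the image of the squaring map modulo $1+\PP^{\ell-k}$.

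Thus both your plan and the paper's proof, as written, are valid only for $e=1$ (that is, $F=\mathbb{Q}_2$), where $\dcst_{\ell,sup}$ involves only $\ell-k \in \{1,2\}$ and the collapse $(\RR^\times)^2=(1+2\mathbb{Z}_2)^2 \subset 1+\PP^{\ell-k}$ is immediate. For general $q=2$ the salvageable and sufficient statement is the identity $(\mathcal{F}_\gamma * h_{u_\beta})(u_{\beta'}) = \vol(B'_\ell)\,\mathcal{F}_\gamma(u_{\beta'-\beta})$, which depends only on the difference $\beta'-\beta$; this alone shows each $\mathcal{F}_\gamma$ acts on $\mathrm{span}\,\Sigma_0$ (and likewise on $\mathrm{span}\,\Sigma_w$) as an additive convolution on $\RR/\PP^\ell$, and such operators commute --- which is all Corollary~\ref{sigmaEllMultFree} actually needs. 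Separately, you should also rule out support crossing between $\Sigma_0$ and $\Sigma_w$; the paper handles this by noting that $\pm u_{\beta'} w u_{-\beta}$ always has a non-unit diagonal entry and hence cannot lie in $B'_\ell g(k,\alpha) B'_\ell \subset B'_1$.
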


\begin{proof} 
First consider $\gamma=I$. Then for any $a,a'\in \Sigma$ 
$$
(\mathcal{F}_I*h_a)(a') = \int_{{\K'}} \mathcal{F}_I(y)h_a(y^{-1}a')dy =\int_{B'_\ell} \vartheta_\ell(y)\vartheta_\ell(y^{-1})h_a(a')dy = \delta_{a,a'} \; \vol(B'_\ell).
$$
More generally, 
note that $(\mathcal{F}_\gamma*h_a)(a') =  0$ whenever $a'a^{-1} \notin B'_\ell \gamma B'_\ell$, since in this case both factors of the integrand are identically zero.
So let $\gamma = g(k, \alpha)\in \dcst_{\ell,sup}$. Since $k\geq 1$ any element of the double coset $B_\ell'\gamma B_\ell'$ is lower triangular modulo $\PP^k$ and thus its diagonal entries lie in $\RR^\times$. 
If $(a,a')\in (\Sigma_0,\Sigma_w)$ or $(a,a')\in (\Sigma_w,\Sigma_0)$, then since $w^{-1}=-w$, the product $a'a^{-1}$ takes the form $\pm u_{\beta'} w u_\beta$ for some $\beta,\beta'\in \RR$, at least one of which lies in $\PP$. 
We compute
$u_{\beta'} w u_\beta=\left[\begin{smallmatrix} -\beta' & 1-\beta \beta' \\ -1 & -\beta\end{smallmatrix}\right]$ 
and infer that at least one of its diagonal entries is not invertible, whence $a'a^{-1} \notin B_\ell'\gamma B_\ell'$.
On the other hand, if $a=u_\beta,a'=u_{\beta'}\in \Sigma_0$, or $a=u_\beta w,a'=u_{\beta'}w\in \Sigma_w$, then we have
$a'a^{-1} = \left[\begin{smallmatrix}1&\beta'-\beta\\ 0 & 1\end{smallmatrix}\right]$,
which lies in $B'_\ell g(k,\alpha) B'_\ell$ if any only if $\beta' - \beta \equiv \alpha\varpi^{k} \mod \PP^\ell$.  Since the values $\beta$ are distinct mod $\PP^\ell$, this implies that for each $a\in \Sigma$, there exists a \emph{unique} $a'\in \Sigma$ for which $(\mathcal{F}_{g(k,\alpha)}*h_a)(a')\neq 0$.  Hence $\mathcal{F}_{g(k,\alpha)}*h_a = \lambda h_{a'}$ for some scalar $\lambda$, which must be real since $\vartheta_\ell$ is real-valued.  The statement follows. 
\end{proof}

\begin{cor}\label{sigmaEllMultFree}
    When $\mathfrak{f}=\mathbb{F}_2$, the Mackey components $\sigma(\ell)$ are all multiplicity-free.  Consequently, they each decompose as a direct sum of $\Sigma(\ell)$ distinct irreducible subrepresentations.
\end{cor}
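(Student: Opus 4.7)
The plan is to show that the Hecke algebra $\mathscr{H} = \mathscr{H}(B'_\ell \backslash \K' / B'_\ell, \vartheta_\ell) \cong \End_{\K'}\sigma(\ell)$, of dimension $\Sigma(\ell)$ by Corollary~\ref{C:Sigmaell}, is \emph{commutative}; semisimplicity of $\sigma(\ell)$ then yields a multiplicity-free decomposition into exactly $\Sigma(\ell)$ pairwise non-isomorphic irreducible summands.

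The central technical step is to upgrade Proposition~\ref{actionOnEndSpaceQ2} by showing that, for each $\gamma = g(k,\alpha) \in \dcst_{\ell,sup}$, the scalars in
\[
\mathcal{F}_\gamma * h_{u_\beta} = \lambda(\gamma,\beta)\, h_{u_{\beta+\alpha\varpi^k}}
\quad\text{and}\quad
\mathcal{F}_\gamma * h_{u_\beta w} = \mu(\gamma,\beta)\, h_{u_{\beta+\alpha\varpi^k}w}
\]
are in fact \emph{independent} of $\beta$. On the $\Sigma_0$ sheet, a direct matrix computation from $(\sigma(\ell)(g)h)(k) = h(kg)$ gives $\sigma(\ell)(u_{\beta_0})h_{u_\beta} = h_{u_{\beta-\beta_0}}$ for every $\beta_0\in\RR$; the operator $\sigma(\ell)(u_{\beta_0})$ commutes with $\mathcal{F}_\gamma * \cdot$, and equating the two expressions for $\mathcal{F}_\gamma * \sigma(\ell)(u_{\beta_0})h_{u_\beta}$ forces $\lambda(\gamma,\beta-\beta_0) = \lambda(\gamma,\beta)$ for all $\beta_0$, so $\lambda(\gamma,\cdot)$ is constant. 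On the $\Sigma_w$ sheet I would run the analogous argument with the lower-unipotent elements $u^-_{\beta_0} = \left[\begin{smallmatrix}1&0\\\beta_0&1\end{smallmatrix}\right]\in\K'$ for $\beta_0\in\PP$; using the Weyl identity $wu^-_{-\beta_0} = u_{\beta_0}w$ one obtains $\sigma(\ell)(u^-_{\beta_0})h_{u_\beta w} = h_{u_{\beta+\beta_0}w}$, and the same commutation forces $\mu(\gamma,\cdot)$ to be constant.

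With the resulting constants $\lambda_\gamma, \mu_\gamma$ in hand, commutativity of $\mathscr{H}$ is immediate: for any $\gamma_i = g(k_i,\alpha_i) \in \dcst_{\ell,sup}$, both $\mathcal{F}_{\gamma_1}*\mathcal{F}_{\gamma_2}$ and $\mathcal{F}_{\gamma_2}*\mathcal{F}_{\gamma_1}$ act on $h_{u_\beta}$ as $\lambda_{\gamma_1}\lambda_{\gamma_2}$ times translation of the subscript by $\alpha_1\varpi^{k_1}+\alpha_2\varpi^{k_2}$, and on $h_{u_\beta w}$ as $\mu_{\gamma_1}\mu_{\gamma_2}$ times the same translation; since $\RR/\PP^\ell$ is abelian the two products coincide on every basis vector of $\sigma(\ell)$, and by the identification $\mathscr{H}\cong \End_{\K'}\sigma(\ell)$ as an algebra of operators on $\sigma(\ell)$ we conclude that $\mathscr{H}$ is commutative.

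Finally, since $\sigma(\ell)$ is a finite-dimensional complex representation of $\K'$ factoring through a finite quotient it is semisimple, so $\End_{\K'}\sigma(\ell)$ is a semisimple commutative $\mathbb{C}$-algebra of dimension $\Sigma(\ell)$, hence isomorphic to $\mathbb{C}^{\Sigma(\ell)}$. Writing $\sigma(\ell) = \bigoplus_i m_i\rho_i$ with pairwise non-isomorphic irreducibles $\rho_i$ gives $\End_{\K'}\sigma(\ell)\cong \bigoplus_i M_{m_i}(\mathbb{C})$; commutativity forces $m_i = 1$ for all $i$, and the total count of summands is $\Sigma(\ell)$. The main obstacle I anticipate is the bookkeeping on the $\Sigma_w$ sheet, where one must restrict $\beta_0$ to $\PP$ so that right action by $u^-_{\beta_0}$ preserves the coset representatives of Lemma~\ref{L:Cosetrepresentatives}; once that is set up, the argument runs parallel to the $\Sigma_0$ case.
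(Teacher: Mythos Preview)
Your argument is correct, and it proceeds along a genuinely different route from the paper. The paper does \emph{not} prove that the scalars $\lambda(\gamma,\beta)$, $\mu(\gamma,\beta)$ are constant in $\beta$; it stops at Proposition~\ref{actionOnEndSpaceQ2} and observes only that for any $\gamma,\gamma'\in\dcst_{\ell,sup}$ both products $\mathcal{F}_\gamma*\mathcal{F}_{\gamma'}$ and $\mathcal{F}_{\gamma'}*\mathcal{F}_\gamma$ send each $h_a$ to a scalar multiple of the \emph{same} basis vector, so their difference is diagonal in the basis $\{h_a\}$. It then invokes a structural fact: in $\End_{\K'}\sigma(\ell)\cong\bigoplus_i M_{m_i}(\mathbb{C})$, the algebra generated by commutators of any block $M_{m_i}$ with $m_i\ge 2$ is all of $M_{m_i}$ (since $[M_{m_i},M_{m_i}]=\mathfrak{sl}_{m_i}$ generates $M_{m_i}$ as an algebra), which cannot be diagonal; hence every $m_i=1$.

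Your approach trades this structure-theoretic step for an additional computation: you exploit the $\K'$-equivariance of the Hecke action to show the scalars are actually constant in $\beta$, after which commutativity of $\mathscr{H}$ is immediate. This is slightly more work than the paper needs but gives sharper information and a more transparent proof of commutativity. The bookkeeping you flag on the $\Sigma_w$ sheet is fine: for $\beta_0\in\PP$ the identity $wu^-_{-\beta_0}=u_{\beta_0}w$ gives $\sigma(\ell)(u^-_{\beta_0})h_{u_\beta w}=h_{u_{\beta+\beta_0}w}$, and since both $\beta$ and $\beta_0$ lie in $\PP$ the translate remains a valid representative in $\Sigma_w$, which suffices to conclude $\mu(\gamma,\cdot)$ is constant on $\PP/\PP^\ell$.
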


\begin{proof}
    From Proposition~\ref{actionOnEndSpaceQ2}, it follows that the actions of the operators 
$\mathcal{F}_{g(k,\alpha)}$ commute, up to potentially a scalar factor; thus  
for all $g(k,\alpha),g(k',\alpha')\in \dcst_{\ell,sup}$, the operator $
\mathcal{F}_{g(k,\alpha)}*\mathcal{F}_{g(k',\alpha')}-\mathcal{F}_{g(k',\alpha')}*\mathcal{F}_{g(k,\alpha)}
$
is diagonal with respect to the basis $\{h_a\mid a\in \Sigma\}$. Since $\End_{\K'}(\sigma(\ell))$ is isomorphic to a sum of matrix algebras, the subalgebra generated by its commutators is diagonal if and only if  all summands are of degree one.  
Thus $\End_{\K'}(\sigma(\ell))$ is in fact commutative, and the representation $\sigma(\ell)$ is multiplicity-free, whence the result.
\end{proof}

\section{Constructing representations from nilpotent orbits}\label{S:7}

From now onwards we again let $F$ be an arbitrary local nonarchimedean field of residual characteristic two.    We begin in Section~\ref{nilp} with some facts about nilpotent orbits in $\Sl(2,F)$, and then in Section~\ref{SS:Iuell} construct irreducible representations of $\K$ and $\K'$ starting from nilpotent elements of negative depth at $x_0$ in the corresponding Lie algebra.    In Section~\ref{SS:brules} we prove these are precisely the irreducible components of the restrictions to $\K$ and $\K'$ of the Mackey components $\sigma(\ell)$ and hence derive the branching rules for all irreducible depth-zero supercuspidal representations of $G'$.

\subsection{Nilpotent orbits in \texorpdfstring{$\mathfrak{sl}(2,F)$}{SL(2,F)}}\label{nilp} 
By Engel's theorem, 
any nilpotent element of $\mathfrak{g}'$ is $G'$-conjugate to a matrix of the form 
\begin{equation}\label{2.1}
X_v=\begin{bmatrix} 0 & 0\\ v & 0\end{bmatrix}.
\end{equation}
In fact (for any field $F$) these give a set of representatives for all the distinct nilpotent $G'$-orbits by choosing
\begin{equation}\label{nilporb}
v \in \{0\} \cup F^\times/(F^\times)^2.
\end{equation}
Thus when $\car(F)=0$,   Lemma~\ref{Lemma:Cassel} yields $4q^e+1$ nilpotent orbits in all, but when $\car(F)=2$, there are infinitely many.  For each $v$ 
write $\mathcal{O}_v$ for the $G'$-orbit of $X_v$.  
All nonzero orbits are principal, that is, maximal with respect to the closure ordering.  Note that all $G'$ conjugates of $X_v$ are of the form
\begin{equation} \label{Eq:conjXu}
{}^gX_v=v
\begin{bmatrix}
ab&-b^2\\a^2&-ab
\end{bmatrix}
\end{equation}
for some $a,b\in F$, not both zero.

Recall that the depth at $x$ of a nonzero element $X\in \g'$ is the unique $r\in \mathbb{R}$ such that $X\in \g'_{x,r}\smallsetminus \g'_{x,r+}$.
The following  lemma holds independent of $p$.

\begin{lem}\label{vertexnilpotent}
For each principal nilpotent orbit $\cO\subset \g'$ there exists a unique $G'$-orbit of points $x\in \buil(G')$ such that $\cO$ contains an element of depth zero at $x$.  In this case, $x$ is a vertex and $\cO$ contains elements of every even depth at $x$, whereas the nilpotent orbit $\varpi \cO$ contains elements of every odd depth at $x$. 
\end{lem}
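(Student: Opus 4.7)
The plan is to work with the representative $X_v$ of $\cO$ from \eqref{2.1}, computing depths explicitly at the two reference vertices $x_0$ and $x_1$ and then leveraging dilation by $\varpi^2$.

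First I would establish the basic depth formulas. Since $\g'_{x_0,r}$ consists of trace-zero matrices with all entries in $\PP^{\lceil r\rceil}$, the depth of $X_v$ at $x_0$ equals $\val(v)$; and using $\g'_{x_1,r}=\Ad(g_1)\g'_{x_0,r}$ together with $g_1^{-1}X_vg_1=X_{v\varpi}$, the depth of $X_v$ at $x_1$ equals $\val(v)+1$. An arbitrary element of $\cO$ takes the form \eqref{Eq:conjXu}, and reading off its entries shows that its depth at $x_0$ equals $\val(v)+2\min(\val a,\val b)$, of parity $\val(v)\bmod 2$; an analogous computation after conjugating by $g_1$ shows the depth at $x_1$ has the opposite parity. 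In particular, whether $\cO$ admits a depth-zero element at an $x_0$-type or $x_1$-type vertex is already determined by $\cO$, yielding the uniqueness of the $G'$-orbit of $x$. Existence follows from the conjugation $\diag(\varpi^j,\varpi^{-j})X_v\diag(\varpi^{-j},\varpi^j)=X_{v\varpi^{-2j}}$: choosing $j$ so that $\val(v)-2j\in\{0,-1\}$ produces an element of $\cO$ of depth zero at $x_0$ or $x_1$, respectively.

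The hard part will be ruling out depth zero at non-vertex points. For this, I would take any interior edge point $x$, fix a maximal $F$-split torus $T'$ whose apartment contains $x$, and decompose any $Y\in\g'_{x,0}$ relative to $T'$ as $Y=tH'+aE_{\alpha'}+bE_{-\alpha'}$. The Moy--Prasad filtration at such $x$ forces one of $a,b$ to lie in $\PP$, whence $ab\in\PP$. Nilpotency then reads $Y^2=(t^2+ab)I=0$, so $t^2\in\PP$ and $\val t\in\Z$ forces $t\in\PP$ as well. The image $\bar Y$ in the one-dimensional reductive quotient $\g'_{x,0}/\g'_{x,0+}$ is therefore zero, so $Y$ has strictly positive depth at $x$. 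This rules out interior edge points and establishes the vertex assertion.

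For the remaining depth claims, since $(\varpi^k)^2\in F^{\times 2}$, scaling by $\varpi^{2k}$ preserves $\cO$ while shifting the depth at $x$ by $2k$; iterating from a depth-zero element gives representatives of every even depth. Scaling once by $\varpi$ sends $\cO$ to the distinct orbit $\varpi\cO$ and shifts all depths by one, so $\varpi\cO$ contains elements of every odd depth at $x$.
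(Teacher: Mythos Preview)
Your proof is correct and follows essentially the same route as the paper's: both exhibit depth-zero representatives at $x_0$ or $x_1$ by adjusting $v$ within its square class, both rule out interior edge points via the observation that in the toral reductive quotient $\g'_{x,0}/\g'_{x,0+}$ a nilpotent element must have vanishing diagonal part (you phrase this as $t^2=-ab\in\PP$; the paper phrases it as nonzero cosets having nonzero determinant), and both obtain the parity statement by dilating by powers of $\varpi$. Your argument is somewhat more explicit---you compute the depth of a general conjugate as $\val(v)+2\min(\val a,\val b)$ rather than reducing to $X_{v'}$ via $\K'$-conjugacy as the paper does---but the substance is the same.
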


\begin{proof}
Let $x\in\buil(G')$ and $X
\in \cO\cap (\g'_{x,0}\smallsetminus \g'_{x,0+})$ for some principal nilpotent orbit $\cO$.  Then for all $g\in G'$ we have $\Ad(g)X \in \cO\cap (\g'_{gx,0}\smallsetminus \g'_{gx,0+})$, so this condition is an invariant of the $G'$-orbit of $x$.  For each $v\in \mathcal{R}^\times/(\mathcal{R}^\times)^2$ we have $X_v \in \cO_v \cap (\g'_{x_0,0}\smallsetminus \g'_{x_0,0+})$ whereas $X_{v\varpi^{-1}} \in \cO_{v\varpi^{-1}} \cap (\g'_{x_1,0}\smallsetminus \g'_{x_1,0+})$.  

If $x$ is not a vertex, then $\g'_{x,0}/\g'_{x,0+}\cong \mathfrak{t}_0/\mathfrak{t}_{0+}$ for some split toral subalgebra $\mathfrak{t}$.  It follows that the elements of any nonzero coset have nonzero determinant, and thus $\cO \cap (\g'_{x,0}\smallsetminus \g'_{x,0+})=\emptyset$ for any nonzero nilpotent orbit $\cO$.  Thus $x$ is a vertex.  Suppose $v\in \cS$, our set of representatives for $\RR^\times/(\RR^\times)^2$.  
Using \eqref{Eq:conjXu} we deduce that $X_{a^2v}\in \cO$ for every $a\in F^\times$; these elements have even depth $2\val(a)$ at $x_0$ and have odd depth $2\val(a)+1$ at $x_1$.  The case $v\in \varpi\cS$ is analogous.  Since conjugation by $\K'$ preserves both the $G'$ orbit and the depth at $x_0$, and every nonzero nilpotent $\K'$-orbit contains some element $X_v$ with $v\in F^\times$, the result follows.
 \end{proof}

Let $\cO_v=G'\cdot X_v$.  Then by the Iwasawa decomposition $G'=\K'SU$ we have a further decomposition of $\mathcal{O}_v$  into disjoint $\K'$ orbits as
\begin{equation}\label{E:K'orbits}
    \mathcal{O}_v = \K'S\cdot X_v = \bigsqcup_{n\in\mathbb{Z}} \K'\cdot X_{v\varpi^{2n}},
\end{equation}
where the $\K'$-orbit of $X_{v\varpi^{2n}}$ consists of all elements of $\cO_v$ of depth $2n+\val(v)$ at $x_0$.

Recall that a \emph{degenerate coset} is a nonzero element of $\g'_{x,r}/\g'_{x,r+}$, for some $x\in \buil(G')$ and $r\in \mathbb{R}$, that contains a nilpotent element.  When $p$ is odd, every degenerate coset of $\g'$ meets a unique nilpotent orbit, and DeBacker proves in \cite{DeBacker2002Nilpotent} that the nilpotent orbits can be parametrized
by certain classes of pairs $(x,\xi)$ where $x\in\buil(G')$ and $\xi\in \g'_{x,0}/\g'_{x,0+}$ is a degenerate coset, equivalently, is the lift of a nilpotent element of the Lie algebra of $\mathsf{G}'_x=G'_{x,0}/G'_{x,0+}$.  This parametrization fails in an interesting way when $p=2$:
most orbits instead become ``close cousins" that cannot be distinguished in any depth-zero coset $\g'_{x,r}/\g'_{x,r+}$. We make this precise as follows. 

\begin{dfn}
    Let $s<t\in \mathbb{R}$.  
    Define a \emph{degenerate $(s,t)$ coset at $x$} to be a coset $X+\g'_{x,t} \in \g'_{x,s}/\g'_{x,t}$ where $X$ is a nilpotent element of depth $s$ at $x$, that is, $X\in\mathfrak{g}'_{x,s}\smallsetminus \mathfrak{g}'_{x,s+}$.  
\end{dfn}

When $\g'_{x,s+}=\g'_{x,t}$ we recover the notion of a degenerate coset.  At a vertex it suffices to consider integral $s,t$.

\begin{lem}\label{countNilpOrbits}
    Let $s<t\in \mathbb{Z}$ and let $\xi$ be a  $(s,t)$-degenerate coset at $x_0$.  If $\car(F)=2$ then $\xi$ meets infinitely many nilpotent $G'$-orbits whereas when $\car(F)=0$,
   \begin{itemize}
       \item if $t-s>2e$, then $\xi$ meets a unique nilpotent $G'$-orbit; 
       \item if $t-s=2e$, then $\xi$ meets exactly two nilpotent $G'$-orbits;
       \item for each $k\in \{0, 1, \ldots, e-1\}$, if $t-s\in \{2k,2k+1\}$, then $\xi$ meets exactly $2q^{e-k}$ nilpotent $G'$-orbits.
   \end{itemize} 
   More precisely, for any $F$ and for each $u\in \RR^\times$,  the set of nilpotent $G'$-orbits meeting $\xi=X_{u\varpi^{s}}+\g_{x_0,t}$ is $\{\cO_{u'\varpi^{s}}\mid u'\in\cS, u'\equiv u \in \cS_{t-s}\}$, where $\cS_r$ is as in Definition~\ref{D:Sellk}. 
\end{lem}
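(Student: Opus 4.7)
The strategy is to combine the explicit parametrization \eqref{Eq:conjXu} of nilpotent conjugates with the square-class analysis packaged into $\cS_{t-s}$. First, I would unpack the coset: $\xi = X_{u\varpi^s} + \g'_{x_0,t}$ is the set of traceless matrices $\begin{bmatrix} \alpha & \beta \\ \gamma & -\alpha\end{bmatrix}$ with $\alpha,\beta \in \PP^t$ and $\gamma \in u\varpi^s + \PP^t$. By \eqref{Eq:conjXu}, every $G'$-conjugate of $X_v$ has the form $v\begin{bmatrix} ab & -b^2\\a^2 & -ab \end{bmatrix}$ for some $(a,b) \in F^2 \setminus \{(0,0)\}$, and such a conjugate lies in $\xi$ precisely when
\begin{equation*}
vab \in \PP^t,\qquad vb^2 \in \PP^t, \qquad va^2 \in u\varpi^s + \PP^t.
\end{equation*}

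Next I would extract the resulting structure on $v$. Writing $v = v_0\varpi^m$ with $v_0 \in \RR^\times$ and $m \in \Z$, the third condition forces $m + 2\val(a) = s$, hence $m \equiv s\pmod 2$. Since $\varpi^{m-s}$ is then a square in $F^\times$, the orbit $\cO_v$ coincides with $\cO_{v_0\varpi^s}$, so after replacing $v$ by $v_0\varpi^s$ and $a$ by $a\varpi^{(m-s)/2}$ I may assume $m=s$ and $a \in \RR^\times$. With these normalizations the two conditions on $b$ are both dominated by $\val(b) \geq t-s$ and are satisfied by the choice $b = 0$, while the matrix $g = \begin{bmatrix} a^{-1} & b \\ 0 & a\end{bmatrix} \in G'$ realizes the resulting conjugate. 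What remains is the single congruence
\begin{equation*}
v_0 a^2 \equiv u \pmod{\PP^{t-s}}, \qquad a \in \RR^\times,
\end{equation*}
which has a solution $a$ if and only if $v_0 u^{-1} \in (\RR^\times)^2(1+\PP^{t-s})$. By the remark following Definition~\ref{D:Sellk}, this is exactly the condition that $v_0$ and $u$ represent the same class in $\cS_{t-s}$, establishing the ``more precisely'' assertion.

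The enumeration then follows by counting the fiber of the reduction map $\cS \twoheadrightarrow \cS_{t-s}$ over $[u]$, which has cardinality $|\cS|/|\cS_{t-s}|$. When $\car(F)=2$, $|\cS|$ is infinite, so infinitely many orbits meet $\xi$. When $\car(F)=0$, Lemma~\ref{Lemma:Cassel} gives $|\cS|=2q^e$ and Lemma~\ref{valueOfSellK} gives $|\cS_{t-s}| \in \{1,\, q^{\lfloor (t-s)/2 \rfloor},\, q^e,\, 2q^e\}$ according to the position of $t-s$ relative to $2e$; dividing produces the values $2q^{e-k}$, $2$, or $1$ in the stated ranges. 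The only step requiring care is the reduction to the single congruence, in particular the passage from $m \equiv s \pmod 2$ to the normalization $m = s$ by absorbing $\varpi^{m-s}$ into a square; once this is in place, the remainder of the argument is bookkeeping directly from Lemmas~\ref{Lemma:Cassel} and \ref{valueOfSellK}.
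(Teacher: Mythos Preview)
Your proposal is correct and follows essentially the same approach as the paper: both arguments use the explicit form \eqref{Eq:conjXu} of a conjugate of $X_v$ to reduce the question of whether $\cO_v$ meets $\xi$ to the square-class congruence $v_0 a^2 \equiv u \pmod{\PP^{t-s}}$, and then read off the count from Lemmas~\ref{Lemma:Cassel} and~\ref{valueOfSellK}. Your write-up is in fact more thorough than the paper's, which leaves implicit the normalization step (passing from arbitrary $v$ to $v_0\varpi^s$ via the parity of the valuation) and the explicit construction of a witness $g$ for the reverse implication; the paper simply asserts that the congruence characterizes the orbits meeting $\xi$ and moves directly to the enumeration.
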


\begin{proof}   
    By \eqref{Eq:conjXu}, and the definition of the Moy--Prasad filtration of $\g'$ at $x_i$,  we infer that if some $G'$-conjugate of a nilpotent element $X_{u'}$ meets $\xi$, then $u'a^2 \in u+\mathcal{P}^{t-s}$ for some $a\in F$.  Since $t>s$ this forces $a\in \RR^\times$, so $u'$ must be in the square class of $u$ modulo $\PP^{t-s}$.   Thus $\g_{x_0,s}/\g_{x_0,t}$ partitions the set of nilpotent orbits into  equivalence classes indexed by $\cS_{t-s}$, and when $\g'$ has only finitely many nilpotent $G'$-orbits we can count the number of orbits in each class using  Lemma~\ref{Lemma:Cassel}.
\end{proof}

For example, when $F=\mathbb{Q}_2$, then the coset $\xi_r = X_u+\g'_{x_0,r}$ with $r>0$ and $u\in \cS$ satisfies:
\begin{itemize}
        \item if $r=1$, then $\xi_r$ meets $4$ nilpotent orbits;
        \item if $r=2$, then $\xi_r$ meets $2$ nilpotent orbits; and
        \item if $r\geq 3$, then $\xi_r$ meets a unique nilpotent orbit.
    \end{itemize}

\begin{dfn}
In the setting of Lemma~\ref{countNilpOrbits}, when $X$ and $Y$ are two nilpotent elements of depth $s$ at $x_0$ with $\K'$-conjugate degenerate $(s,t)$ cosets, that is, such that $X\in \K'\cdot Y+\g_{x_0,t}$, then we briefly say their $\K'$ orbits are \emph{equivalent modulo depth $t$.}      
\end{dfn}

This condition is equivalent to the $G'$-orbit of $X$ meeting the $(s,t)$ degenerate coset of $Y$ at $x_0$.

When $p$ is odd, equivalence modulo depth $t$ is simply $\K'$-conjugacy.  When $p=2$, in contrast, all nilpotent $\K'$-orbits consisting of elements of some fixed depth $s$ are equivalent modulo depth $s+1$.  For all $s\leq t\in \Z$, there are $|\mathcal{S}_{t-s}|$ distinct classes of $\K'$-orbits of depth $s$ with respect to equivalence modulo depth $t$, 
so for example, when $t-s\geq 3$ there are $4$ classes if $F=\mathbb{Q}_2$ but $2^{t-s-1}$ classes if $F=\Fpt$.

\begin{rem}
    Under $G=\Gl(2,F)$ there is only one nonzero nilpotent orbit in $\mathfrak{gl}(2,F)$, represented by $X_1$, and it is attached (in the sense of Lemma~\ref{vertexnilpotent}) to any vertex of the reduced building.  This orbit decomposes into $\K$ orbits as
    $$
    G\cdot X_1 = \bigsqcup_{n\in\mathbb{Z}} \K\cdot X_{\varpi^{n}}.
$$
\end{rem}

\subsection{Representations of \texorpdfstring{$K$}{K} from degenerate \texorpdfstring{$(-\ell,-\ell/2)$}{ell2} cosets at \texorpdfstring{$x_0$}{x0}}  \label{SS:Iuell}
By the preceding section, an arbitrary nilpotent  $\K$-orbit of $\g$ (or an arbitrary nilpotent $\K'$-orbit of $\g'$) of depth $-\ell$ at $x_0$ is represented by an element of the form $X_v$ with $v=u\varpi^{-\ell}$,  for some pair $(u,\ell)$ with $u\in \RR^\times$ and $\ell\in \mathbb{Z}_{>0}$.   (In fact, for $\K$ it suffices to take $u=1$ and for $\K'$ we may choose $u\in \cS$.) 
Using the Moy--Prasad isomorphism 
$$
\K_{\ell/2+}/\K_{\ell+}=G_{x_0,\ell/2+}/G_{x_0,\ell+}\to \g_{x_0,\ell/2+}/\g_{x_0,\ell+},
$$
we can construct a character $\eta_{(u,\ell)}$ of $\K_{\ell/2+}$ by the rule that for each $g\in \K_{\ell/2+}$, 
$$
\eta_{(u,\ell)}(g)=\psi(\Tr(X_{u\varpi^{-\ell}}(g-I))) = \psi(u\varpi^{-\ell}g_{12})
$$
where $g_{12}$ denotes the common $(1,2)$ entry of $g$ and $g-I$, modulo $\PP^{\ell+}$. 
Then $\eta_{(u,\ell)}$ depends only on the degenerate $(-\ell,-\ell/2)$ coset $X_{u\varpi^{-\ell}}+\g_{x_0,-\ell/2}$.

To simplify notation, we define
$$
m=\lceil \ell/2 \rceil, \quad \text{and} \quad m'=\lceil \ell/2+\rceil=\lfloor \ell/2\rfloor+1 \geq m;
$$ 
then in all cases we have $m+m'=\ell+1$. Note that $\eta_{(u,\ell)}$ is a character of $\K_{m'}$.
Set also $\eta'_{(u,\ell)}=\Res_{\K'_{\ell/2+}}\eta_{(u,\ell)}$, a character of $\K'_{m'}$.   

Our goal in this section is to produce an irreducible representation of $\K$ (respectively, of $\K'$) from such a character.
We begin with some Clifford theory.

\begin{lem}\label{normalizerCharEta}
Set  $Z_0=Z(G)\cap \K$ and $U_0 = U\cap \K$ for the subgroup of lower triangular unit matrices.  The normalizer in $\K$ of the character  $\eta_{(u,\ell)}$ of $\K_{m'}$ is $$N_{\K}(\eta_{(u,\ell)})=Z_0U_0\K_{m},$$ whereas the normalizer in $\K'$ of the character $\eta'_{(u,\ell)}$ of $\K'_{m'}$ is
$$
N_{\K'}(\eta_{(u,\ell)}) = \begin{cases}
U_0\K'_{\lceil m/2\rceil } & \text{if $\ell\leq 4e$;}\\
Z'U_0\K'_{m-e} & \text{if $\ell\geq 4e+1$.}\\
\end{cases}
$$
\end{lem}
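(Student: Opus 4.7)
The plan is to translate the condition $\prescript{k}{}{\eta_{(u,\ell)}} = \eta_{(u,\ell)}$ on $\K_{m'}$ into explicit algebraic conditions on the entries $a,b,c,d$ of $k$ via the Moy--Prasad isomorphism, and then verify that the resulting set factors as stated.  I would write a general $g\in\K_{m'}$ as $g = I+h$ with $h\in\g_{x_0,m'}$; in the $\Sl$ case one may further take $h$ traceless without changing its Moy--Prasad class, since $\det(I+h)=1$ forces $\Tr(h)=-\det(h)\in\PP^{2m'}\subseteq\PP^{\ell+1}$.  A direct matrix computation then yields
\[
(kgk^{-1})_{12}\equiv \Delta^{-1}\bigl(a^2 h_{12}-b^2 h_{21}+ab(h_{22}-h_{11})\bigr)\pmod{\PP^{\ell+1}},
\]
where $\Delta=\det k$.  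By nondegeneracy of $\psi$ and independence of the $h_{ij}$'s over $\PP^{m'}$, the preservation of $\eta_{(u,\ell)}$ becomes, for $\K$: $a^2/\Delta-1$, $b^2/\Delta$, $ab/\Delta$ all lie in $\PP^m$; and for $\K'$ (where $h_{22}-h_{11}=-2h_{11}$): $a^2-1$, $b^2$, $2ab$ all lie in $\PP^m$.  The factor of $2$ appearing in the third $\Sl$-condition is the algebraic root of the case split between $\ell\leq 4e$ and $\ell\geq 4e+1$.

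In the $\K$ case, $\Delta\in\RR^\times$ together with $a^2/\Delta\in 1+\PP^m$ forces $a\in\RR^\times$; then $ab\in\PP^m$ gives $b\in\PP^m$, whence $ad=\Delta+bc\in\RR^\times$ and so $d\in\RR^\times$; and finally $a^2\equiv ad\pmod{\PP^m}$ yields $a\equiv d\pmod{\PP^m}$, while $c\in\RR$ is free.  Any such matrix factors as $dI\cdot\bigl[\begin{smallmatrix}1&0\\c/d&1\end{smallmatrix}\bigr]\cdot k_m$ with $k_m\in\K_m$, proving $N_\K(\eta_{(u,\ell)})\subseteq Z_0U_0\K_m$; the reverse containment is immediate, since $Z_0$ is central, $U_0$ preserves $h_{12}$ exactly, and every $k_m\in\K_m$ automatically makes the three quantities lie in $\PP^m$.

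For $\K'$, Lemma~\ref{L:squaring} applied to $a^2\in 1+\PP^m$ gives $a\in\pm 1+\PP^s$ with $s=\max\{m-e,\lceil m/2\rceil\}$.  When $\car(F)=0$, $2ab\in\PP^m$ together with $a\in\RR^\times$ yields $b\in\PP^{m-e}$, which combined with $b^2\in\PP^m$ gives $b\in\PP^s$; when $\car(F)=2$, $2ab\in\PP^m$ is vacuous and $b\in\PP^{\lceil m/2\rceil}=\PP^s$ comes from $b^2\in\PP^m$ alone.  The entry $d$ is then determined by $ad-bc=1$, with $c\in\RR$ free.  If $a\in 1+\PP^s$, set $u_0=\bigl[\begin{smallmatrix}1&0\\c/a&1\end{smallmatrix}\bigr]\in U_0$; a direct computation gives $u_0^{-1}k=\bigl[\begin{smallmatrix}a&b\\0&1/a\end{smallmatrix}\bigr]\in\K'_s$, so $k\in U_0\K'_s$.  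If $a\in-1+\PP^s$, the same argument applied to $-k$ gives $k\in Z'U_0\K'_s$.  The sub-case $a\in-1+\PP^s$ is genuinely separate from $a\in 1+\PP^s$ precisely when $2\notin\PP^s$, i.e.\ $s>e$, which by the definition of $s$ is equivalent to $\ell\geq 4e+1$.  In the complementary range $\ell\leq 4e$ one has $s=\lceil m/2\rceil\leq e$, so $-I\in\K'_e\subseteq\K'_s$ is already absorbed into $U_0\K'_s$ and the $Z'$ factor is unnecessary.  For the reverse containment I would substitute directly: the inequalities $2s\geq m$ and $s+e\geq m$ built into the definition of $s$ show that every $k_0\in\K'_s$ satisfies $a^2-1,b^2,2ab\in\PP^m$, while $U_0$ and $Z'$ were already handled.

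The main technical subtlety is the trace-zero reduction on $h$ in the $\Sl$ case, since this is exactly what produces the factor of $2$ and hence the ramification-dependent case split; without it the condition on $b$ would be stronger and one would miss the $\K'_s$ contribution to the normalizer.  Once this subtlety is pinned down, Lemma~\ref{L:squaring} and the explicit $U_0$-factorization deliver both inclusions in each case.
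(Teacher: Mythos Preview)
Your proposal is correct and follows essentially the same approach as the paper: both translate the normalizing condition into the requirement that a linear form in the entries of the Moy--Prasad image $h$ (or $W$) vanish modulo $\PP^{\ell+1}$, extract the three coefficient conditions, and in the $\Sl$ case apply Lemma~\ref{L:squaring} plus the valuation of $2$ to obtain the exponent $s=\max\{m-e,\lceil m/2\rceil\}$ and the case split at $\ell=4e$. Your version differs only in bookkeeping (you conjugate by $k$ rather than $k^{-1}$, so $a=k_{11}$ plays the role of the paper's $g_{22}$) and in being more explicit about the reverse containment and the $U_0$-factorization, both of which the paper leaves implicit.
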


\begin{rem}
    When $p$ is odd, the normalizer of the corresponding character in $\Sl(2,\RR)$ is significantly smaller, being $Z'U_0\K'_m$.
\end{rem}

\begin{proof}
  Recall that $m+m'=\ell+1$.  
An element $g\in \K$ satisfies $\eta_{(u,\ell)}={}^g\eta_{(u,\ell)}$ if and only if 
for all $W\in \g_{x_0,\ell/2+}=\g_{x_0,m'}$, we have
$\psi(u\varpi^{-\ell}W_{12})=\psi(u\varpi^{-\ell}(g^{-1}Wg)_{12})$, where $*_{12}$ denotes the $(1,2)$ entry of the corresponding matrix.  This is equivalent to the requirement that $(g^{-1}Wg)_{12}-W_{12}\in \PP^{\ell+1}$ for all such $W$.  

Write $g=(g_{ij})$ and set $\lambda=\det(g)\in \RR^\times$.  We require, for all $W=(W_{ij})$ with each $W_{ij}\in \PP^{m'}$, that
\begin{equation}\label{E:gwgw}
(g^{-1}Wg)_{12}-W_{12}=\lambda^{-1}g_{22}g_{12}(W_{11}-W_{22})-\lambda^{-1}g_{12}^2W_{21}+\lambda^{-1}g_{22}^2W_{12} - W_{12} \in \PP^{\ell+1}.
\end{equation}
Since this should hold for all $W$ we must have $g_{22}^2\equiv \lambda \mod \PP^m$ and thus $g_{22}\in\RR^\times$.  Since $W_{11}-W_{22}$ ranges freely over $\PP^{m'}$, we further require  $g_{12}\in \PP^m$, which itself guarantees $\lambda^{-1}g_{12}^2W_{21}\in \PP^{\ell+1}$.  Consequently $\det(g)=\lambda \equiv g_{11}g_{22}\mod \PP^{m}$ so that by the first observation $g_{11}\equiv g_{22}\mod \PP^m$.  Since $g_{21}$ may range freely over $\RR$ we conclude that $N_{\K}(\eta_{(u,\ell)})=Z_0U_0\K_{m}$, as required.

Now consider the normalizer of $\eta'_{(u,\ell)}$ in $\K'$.
In this case, $\lambda = \det(g)=1$.  Furthermore, an arbitrary element $W\in \g'_{x_0,m'}$ satisfies $W_{22}=-W_{11}$, so that the expression in \eqref{E:gwgw} simplifies instead to
$$
(g^{-1}Wg)_{12}-W_{12}= 2g_{22}g_{12}W_{11}-g_{12}^2W_{21}+(g_{22}^2 - 1)W_{12}.
$$
This lies in $\PP^{\ell+1}$ for all choices of $W$ if and only if $2g_{22}g_{12}$, $g_{12}^2$ and $g_{22}^2-1\in \PP^{m}.$

This last condition implies by Lemma~\ref{L:squaring} that  $g_{22}\in \pm 1+\PP^{\max\{m-e,\lceil m/2 \rceil\}}$. Since $g_{22}$ is invertible,  the first two conditions together imply $g_{12}\in \PP^{\max\{m-e,\lceil m/2 \rceil\}}$, 
recalling that this simplifies to $\PP^{\lceil m/2 \rceil}$ when $2=0$ or $e=\infty$.  Again, the element $g_{21}$ varies over $\mathcal{R}$.

Note that $m-e= \lceil m/2\rceil$ if and only if $m\in \{2e,2e+1\}$; unraveling this condition yields $4e-1\leq \ell\leq 4e+2$ so we may divide the cases at $\ell=4e$.  When $\ell\leq 4e$, we have $-1\in 1+\PP^{\lceil m/2\rceil}$, yielding the statement.
\end{proof}

One can show that $\eta_{(u,\ell)}$ and $\eta'_{(u,\ell)}$ do not extend to characters of their (large) normalizers.   To produce the required irreducible representation, we instead
first extend to a small intermediate subgroup. 

We begin with $\K=\Gl(2,\RR)$.  Define the groups
    $$
    \Jtype := \begin{bmatrix}
        1+\PP^{m} & \PP^{m'}\\ \PP^{m'} & 1+\PP^{m}
    \end{bmatrix}\cap \K \quad \text{and}\quad \intgroup := Z_0U_0\Jtype. 
    $$
They satisfy
$$
\K_{\ell/2+}\subseteq \Jtype \subset \intgroup \subset N_{\K}(\eta_{(u,\ell)}).
$$
An arbitrary element of $\intgroup$ is of the form $g=(g_{ij})\in \K$ such that $g_{12}\in \PP^{m'}$ and  $g_{11}-g_{22}\in \PP^{m}$. 

Since $m+m'=\ell+1$, it is straightforward to verify that the character  $\eta_{(u,\ell)}$ extends to a well-defined character of $\Jtype$ by the formula $\eta_{(u,\ell)}(k)=\psi(u\varpi^{-\ell}k_{12})$, where $k_{12}$ is the $(1,2)$ entry of $k$. 
To extend this further to a character of $\intgroup$, let $\zeta$ denote a character of $\RR^\times$ of depth less than $m$.
Since for any $z\in Z_0$, $c\in U_0$ and $k\in \Jtype$, the upper triangular entry of $g=zck\in \intgroup$ is $z_{11}k_{12} \mod \PP^{m'}$, the formula
\begin{equation}\label{E:hatetazetauell}
\hat{\eta}_{\zeta,(u,\ell)}(g)=\zeta(g_{11})\psi(u\varpi^{-\ell}g_{11}^{-1}g_{12})
\end{equation}
is a well-defined character of $\intgroup$ that restricts to $\eta_{(u,\ell)}$ on $K_{m'}$.
As $g_{12}\in \PP^{m'}$, the character 
$\hat{\eta}_{\zeta,(u,\ell)}$ 
depends only on $\zeta$, $\ell$ and the coset $u + \PP^{m}$ (equivalently, on $\zeta$ and the degenerate $(-\ell,-\ell/2)$ coset $X_{u\varpi^{-\ell}}+\g_{x_0,-\ell/2}$).  We define
\begin{equation}\label{E:defJzetauell}
J(\zeta,(u,\ell)) = \Ind_{\intgroup}^{\K}\hat{\eta}_{\zeta,(u,\ell)}.
\end{equation}
When $u=1$ we write $\hat{\eta}_{\zeta,\ell}:=\hat{\eta}_{\zeta,(1,\ell)}$  and $J(\zeta,\ell):=J(\zeta,(1,\ell))$.
    Note that if $g=\diag(1,u)\in \K$ then $\prescript{g}{}{\hat{\eta}_{\zeta,(1,\ell)}}=\hat{\eta}_{\zeta,(u,\ell)}$ and $g$ normalizes $\intgroup$.  Thus for all $u\in \RR^\times$ we have $J(\zeta,u,\ell)\cong J(\zeta,\ell)$.

Now let $\K'=\Sl(2,\RR)$.  
We first observe that $\intgroup\cap \K'$ is  strictly larger than $\{\pm I\}U_0(\K_{m,m'}\cap \K')$.

\begin{lem}
    The subgroup $\intgroup':= Z_0U_0\Jtype \cap \K'=Z_0U_0\Jtype\cap N_{\K'}(\eta_{(u,\ell)})$ is given by
    $$
    \intgroup' = \left\{ g=\begin{bmatrix} z+a\varpi^m & b\varpi^{m'}\\ u &z+d\varpi^m\end{bmatrix} \;\middle|\; a,b,d,u\in \RR^\times, z\in \pm 1+ \PP^{\max\{m-e,\lceil m/2 \rceil\}}, \det(g)=1\right\}.
    $$
\end{lem}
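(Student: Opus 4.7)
The plan is to establish this as a two-part lemma: first the equality of the two descriptions of $\intgroup'$, then the explicit matrix parametrization.

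For the first equality, I would note that by construction $\hat\eta_{\zeta,(u,\ell)}$ is a well-defined character of $\intgroup = Z_0 U_0 \Jtype$ extending $\eta_{(u,\ell)}$, so $Z_0 U_0 \Jtype \subseteq N_{\K}(\eta_{(u,\ell)})$. Intersecting with $\K'$ and restricting the character to $\K'_{m'}$ gives
\[
Z_0 U_0 \Jtype \cap \K' \subseteq N_{\K}(\eta_{(u,\ell)}) \cap \K' \subseteq N_{\K'}(\eta'_{(u,\ell)}),
\]
which yields the claimed equality $Z_0U_0\Jtype \cap \K' = Z_0 U_0 \Jtype \cap N_{\K'}(\eta_{(u,\ell)})$ with no further work.

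For the explicit description, I would write a general element $g \in Z_0 U_0 \Jtype$ as
\[
g = z I \cdot \begin{bmatrix} 1 & 0 \\ c & 1 \end{bmatrix} \cdot k, \qquad k = \begin{bmatrix} 1 + a'\varpi^m & b'\varpi^{m'} \\ e'\varpi^{m'} & 1 + d'\varpi^m \end{bmatrix} \in \Jtype,
\]
with $z \in \RR^\times$, $c \in \RR$, and $a', b', d', e' \in \RR$. Multiplying out and using $m' \geq m$, the four entries of $g$ take the form $z + a\varpi^m$, $b\varpi^{m'}$, some element of $\RR$, and $z + d\varpi^m$, for suitable $a, b, d \in \RR$, which matches the shape in the statement.

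The condition $g \in \K'$ amounts to $z^2 \det(k) = 1$. Since $\det(k) \in 1 + \PP^m$ (the off-diagonal product lies in $\PP^{2m'} \subseteq \PP^m$), this forces $z^2 \in 1 + \PP^m$, and then Lemma~\ref{L:squaring} applied with $\delta = m$ gives precisely $z \in \pm 1 + \PP^{\max\{m-e,\,\lceil m/2\rceil\}}$. For the reverse containment, given any matrix $g$ of the claimed form with $\det(g) = 1$, I would extract $zI$ from the left, then extract a lower unipotent factor matching the $(2,1)$ entry of $z^{-1} g$, and verify that the residual matrix lies in $\Jtype$: the diagonal entries are congruent to $1$ mod $\PP^m$ by construction, the upper-right entry is already in $\PP^{m'}$, and the lower-left entry falls into $\PP^{m'}$ after the unipotent extraction. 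The determinant condition then pins down consistency between $a, d$ and $z$.

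The main technical point is verifying that the lower-left entry of the residual $\Jtype$-factor lies in $\PP^{m'}$ after extracting the unipotent piece, and that the bound $\max\{m-e, \lceil m/2 \rceil\}$ on $z$ arising from Lemma~\ref{L:squaring} matches exactly what is needed for the factorization to close up — both sides of the dyadic dichotomy ($m \leq 2e$ versus $m > 2e$) must be treated uniformly, which is exactly what Lemma~\ref{L:squaring} delivers.
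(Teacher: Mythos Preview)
Your proposal is correct and follows essentially the same approach as the paper. The paper's proof is terser: it relies on the characterization of $\intgroup$ (stated just before the lemma) as those $g\in\K$ with $g_{11}-g_{22}\in\PP^m$ and $g_{12}\in\PP^{m'}$, then imposes $\det(g)=1$ to get $g_{11}^2\equiv 1\bmod\PP^m$ and invokes Lemma~\ref{L:squaring}, exactly as you do with your scalar $z$. The only genuine difference is in the first equality: you deduce $\intgroup'\subseteq N_{\K'}(\eta'_{(u,\ell)})$ from the containment $\intgroup\subseteq N_{\K}(\eta_{(u,\ell)})$ (via the character extension), whereas the paper derives the explicit matrix form of $\intgroup'$ first and then compares it against the explicit description of $N_{\K'}(\eta_{(u,\ell)})$ from Lemma~\ref{normalizerCharEta}. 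Your route avoids that comparison and is slightly cleaner; the paper also omits the reverse containment that you sketch, treating it as evident once the entrywise conditions are in hand.
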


\begin{proof}
    The elements of $\intgroup$ are characterized as those $g=(g_{ij})\in \K$ such that $g_{11}-g_{22}\in \PP^m$ and $g_{12}\in \PP^{m'}$. When $\det(g)=1$, this implies $g_{11}^2\equiv 1 \mod \PP^m$, whence by Lemma~\ref{L:squaring} we have $g_{11}\in \pm 1+\PP^{\max\{m-e,\lceil m/2\rceil\}}$.  Using 
      Lemma~\ref{normalizerCharEta} we infer $\intgroup'\subset N_{\K'}(\eta_{(u,\ell)})$.
\end{proof}

We write $\hat{\eta}'_{\zeta,(u,\ell)}$ for the restriction of $\hat{\eta}_{\zeta,(u,\ell)}$ to $\intgroup'$, understanding that  $\hat{\eta}'_{\zeta,(u,\ell)}$ depends only on the restriction of  $\zeta$ to the subgroup $(\pm 1 + \PP^{\max\{m-e,\lceil m/2\rceil\}})/(1+\PP^m)$, where it is by Lemma~\ref{L:squaring} a quadratic character.  Then we may similarly define
\begin{equation}\label{E:defIzetauell}
I(\zeta,u,\ell) = \Ind_{\intgroup'}^{\K'}\hat{\eta}'_{\zeta,(u,\ell)}.
\end{equation}

\begin{thm}\label{T:JzetaellIzetauell}
    Let $\ell\in \mathbb{Z}_{\geq 1}$ and let $\zeta$ be a character of $\RR^\times$ of depth less than $m=\lceil \ell/2 \rceil$.  Then  the representations $J(\zeta,\ell)$ and $I(\zeta,u,\ell)$, for any $u\in \RR^\times$,
    are irreducible.
Moreover, $I(\zeta,u,\ell)\cong I(\zeta,u',\ell)$ if and only if $u$ and $u'$ represent the same class of squares modulo $\PP^m$, that is, $u\equiv u' \in \cS_m$; equivalently, if and only if the $\K'$ orbits of $X_{u\varpi^{-\ell}}$ and $X_{u'\varpi^{-\ell}}$ are equivalent modulo depth $-\ell/2$. In particular, there are only finitely many representations $I(\zeta, u, \ell)$ for each pair $(\zeta, \ell)$.
\end{thm}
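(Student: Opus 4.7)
The plan is to establish irreducibility via Mackey's intertwining criterion and then classify isomorphism types using Clifford theory for the normal subgroup $\K'_{m'}\triangleleft \K'$, combined with the nilpotent orbit description of Section~\ref{nilp}.

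For irreducibility of $J(\zeta,\ell)=\Ind_{\intgroup}^{\K}\hat\eta_{\zeta,\ell}$, Mackey's criterion requires that for every $g\in \K\setminus \intgroup$ the characters $\hat\eta_{\zeta,\ell}$ and $\prescript{g}{}{\hat\eta_{\zeta,\ell}}$ disagree somewhere on $\intgroup\cap {}^g\intgroup$. By Lemma~\ref{normalizerCharEta}, if $g$ lies outside the normalizer $N_\K(\eta_{(1,\ell)})=Z_0U_0\K_m$, then $\prescript{g}{}{\eta_{(1,\ell)}}$ already differs from $\eta_{(1,\ell)}$ on the normal subgroup $\K_{m'}$, which lies in every conjugate of $\intgroup$, so there is nothing to check. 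Otherwise $\ell$ is necessarily even, and one may take a representative $g=\begin{bmatrix}1 & a\varpi^m\\ 0 & 1\end{bmatrix}$ with $a\in \RR^\times$. A direct computation shows that conjugation by $g$ carries $h=\begin{bmatrix}1 & 0\\ c & 1\end{bmatrix}\in U_0$ to a matrix with $(1,1)$-entry $1-ca\varpi^m$ and $(1,2)$-entry $-a^2c\varpi^{2m}=-a^2c\varpi^{\ell}$; then \eqref{E:hatetazetauell}, combined with the triviality of $\zeta$ on $1+\PP^m$, gives $\prescript{g}{}{\hat\eta_{\zeta,\ell}}(h)=\psi(-a^2c)$ while $\hat\eta_{\zeta,\ell}(h)=1$. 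These values disagree as $c$ varies, and $U_0$ lies in $\intgroup\cap {}^g\intgroup$, completing the Mackey check. The same style of argument in $\K'$, using the larger normalizer from Lemma~\ref{normalizerCharEta}, yields irreducibility of each $I(\zeta,u,\ell)$: for $g$ in the normalizer but not in $\intgroup'$ one arranges $g\in U_0\K'_{\lceil m/2\rceil}$ (or $Z'U_0\K'_{m-e}$ when $\ell\geq 4e+1$) and produces an analogous character separation on $U_0$.

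For the classification, Clifford theory applied to $\K'_{m'}\triangleleft \K'$ reduces $I(\zeta,u,\ell)\cong I(\zeta,u',\ell)$ to the existence of $g\in\K'$ with $\prescript{g}{}{\eta'_{(u,\ell)}}=\eta'_{(u',\ell)}$, which under the Moy--Prasad isomorphism \eqref{E:MP} becomes the condition $\Ad(g)X_{u\varpi^{-\ell}}\equiv X_{u'\varpi^{-\ell}}\pmod{\g'_{x_0,-\ell/2}}$. Unpacking \eqref{Eq:conjXu} for $g=\begin{bmatrix}a & b\\ c & d\end{bmatrix}\in \K'$, this reduces to $b\in \PP^{\lceil m/2\rceil}$ together with $u'\equiv ud^2\pmod{\PP^m}$ for some $d\in\RR^\times$, i.e.\ to $u\equiv u'$ in $\cS_m$ (equivalently, to the two $\K'$-orbits being equivalent modulo depth $-\ell/2$). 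For the forward direction one may exhibit the isomorphism explicitly: conjugation by $\diag(d,d^{-1})\in \K'$ sends $\hat\eta'_{\zeta,(u,\ell)}$ to $\hat\eta'_{\zeta,(ud^{-2},\ell)}$ by inspection of \eqref{E:hatetazetauell}, while multiplying $u$ by an element of $1+\PP^m$ leaves the character unchanged since $\psi$ has conductor $\PP$ and $m+m'=\ell+1$. Finiteness of the family $\{I(\zeta,u,\ell)\}_u$ is then immediate from $|\cS_m|\leq 2q^e$ (Lemma~\ref{valueOfSellK}).

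The main obstacle will be the Mackey computation in the even-$\ell$ case (and its $\K'$ analogue), where $\intgroup$ is strictly contained in the normalizer of $\eta_{(1,\ell)}$. One must verify that the extension $\hat\eta_{\zeta,\ell}$ of \eqref{E:hatetazetauell} carries enough extra information --- via the coupling of $g_{11}$ with $g_{12}$ inside the exponent of $\psi$ --- that elements of $\K_m\smallsetminus \Jtype$ still separate the two characters on the unipotent subgroup $U_0$, even though such elements lie in the normalizer and therefore preserve the underlying Moy--Prasad character $\eta_{(1,\ell)}$.
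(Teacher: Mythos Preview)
Your approach is correct but structurally different from the paper's.  The paper proves irreducibility and the isomorphism classification in one stroke by writing down explicit double coset representatives $\{w\}\cup\{\gamma_{a,b}\}$ for $\intgroup\backslash \K/\intgroup$ (and $\{w\}\cup\{\gamma'_{a,b}\}$ for $\K'$) and computing $\Hom_{\intgroup\cap{}^\gamma\intgroup}(\hat\eta,\prescript{\gamma}{}{\hat\eta})$ on each; the classification of $I(\zeta,u,\ell)$ then emerges because the only coset supporting intertwining with $I(\zeta,u',\ell)$ is a diagonal $\gamma'_{a,0}$ with $a^2u'\equiv u\pmod{\PP^m}$.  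You instead bifurcate irreducibility via Lemma~\ref{normalizerCharEta} (outside the normalizer the restriction to $\K_{m'}$ already separates) and read off the classification from the Moy--Prasad dictionary with degenerate $(-\ell,-\ell/2)$ cosets, which is more conceptual and makes the connection to Section~\ref{nilp} explicit rather than implicit.  The one place your sketch needs fleshing out is the $\K'$ irreducibility: unlike the $\K$ case, the quotient $N_{\K'}(\eta'_{(u,\ell)})/\intgroup'$ is represented by upper unipotents $\left[\begin{smallmatrix}1&b\\0&1\end{smallmatrix}\right]$ with $\val(b)$ ranging over the entire interval $[\lceil m/2\rceil,\,m'-1]$, so ``analogous separation on $U_0$'' requires choosing $c$ with $\val(c)=\ell-2\val(b)$ (so that $g^{-1}hg$ lies in $\intgroup'$ while $-ucb^2\varpi^{-\ell}$ is a unit on which $\psi$ is nontrivial); this works for each depth but is not a single computation.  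Finally, your bound $|\cS_m|\le 2q^e$ is only literally valid for $\car(F)=0$; for finiteness in characteristic two you should cite $|\cS_m|=q^{\lfloor m/2\rfloor}$ from Lemma~\ref{valueOfSellK} directly.
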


\begin{proof}
When $\ell=1$, we see from Lemma~\ref{normalizerCharEta} that $\Gamma(1)=N_{\K}(\eta_{(u,\ell)})$ (and the same for $\K'$) so the representations are irreducible by Clifford theory.  Since $m=1$ and $\cS_1=\{1\}$, the remaining statements are automatic.

Suppose now $\ell\geq 2$.  We proceed as in the proof of Theorem~\ref{dimHomeSigmaEll}, noting that this case is simpler since  characters intertwine on a subgroup if and only if they are equal.  Recall that $m'=\lceil \ell/2+\rceil$ so that $m+m'=\ell+1$.
Let $u\in \RR^\times$. Applying Frobenius reciprocity and Mackey's theorem, we find 
    $$ 
    \Hom_{\K'}(I(\zeta,u,\ell), I(\zeta,u',\ell))\cong \bigoplus_{g\in \intgroup'\backslash \K'/ \intgroup'}\Hom_{\intgroup'\cap\prescript{g}{}{\intgroup'}}(\hat{\eta}_{\zeta,(u,\ell)}, \prescript{g}{}{\hat{\eta}'_{\zeta,(u',\ell)}}),
    $$
    and analogously for $\K$.
    Following a similar strategy to the proof of Proposition~\ref{Prop:doublecosets6.2} yields that a set of representatives for 
     $\intgroup\backslash \K/\intgroup$ is  
   $$
   \{w\} \sqcup\left\{ \gamma_{a,b}:=\begin{bmatrix}
        a^{-1} & b\\ 0 & 1
    \end{bmatrix}:\, a \in \RR^\times/(1+\PP^{m}),  b \in \RR/\PP^{m'} \right\},
    $$  
    whereas a set of representatives for $\intgroup'\backslash \K' /\intgroup'$ is  
   $$
   \{w\} \sqcup\left\{ \gamma'_{a,b}:=\begin{bmatrix}
        a^{-1} & b\\ 0 & a
    \end{bmatrix}:\, a \in \RR^\times/(\pm 1+\PP^{\max\{m-e,\lceil m/2 \rceil\}}),  b \in \RR/\PP^{m'} \right\}.
    $$ 
As in the proof of Theorem~\ref{dimHomeSigmaEll}, a double coset corresponding to $w$ cannot support an intertwining operator since $\hat{\eta}_{\zeta,(u,\ell)}(g)$ depends on the upper triangular entry of $g$ while  $\prescript{w}{}{\hat{\eta}_{\zeta,(u',\ell)}}$ depends on the independent lower triangular entry. 

We consider first the case of $\K=\Gl(2,\RR)$.  Since $J(\zeta,u,\ell)\cong J(\zeta,\ell)$ we assume $u=u'=1$.  Consider a double coset parametrized by $\gamma_{a,b}$, for some $a\in \RR^\times$, $b\in \RR$.  Let $g\in \intgroup$ be arbitrary; then 
$g_{12}\in \PP^{m'}$, $g_{21}\in \RR$ and $g_{11}-g_{22}\in \PP^{m}$.    
Such an element $g$ lies in $\intgroup\cap \prescript{\gamma_{a,b}}{}{\intgroup}$ if and only if
$$
\gamma_{a,b}^{-1}g\gamma_{a,b} = 
\begin{bmatrix}
    g_{11}-bg_{21} & ag_{12} + ab\left((g_{11}-g_{22})-bg_{21}\right)\\
    a^{-1}g_{21} & g_{22}
\end{bmatrix} \in \intgroup.
$$
If $b\notin \PP^{m'}$, then we may choose $g\in\intgroup$ so that $bg_{21}\in\PP^m$ and $b^2g_{21}\in \PP^{m'}\smallsetminus \PP^{\ell+1}$.  For such $g$, we have 
$$
\prescript{\gamma_{a,b}}{}{\hat{\eta}_{\zeta,\ell}}(g)=\zeta(g_{11}-bg_{21})\psi(\varpi^{-\ell}\left(ag_{12} + ab\left((g_{11}-g_{22})-bg_{21}\right)\right)),
$$
which depends on $g_{21}$ and hence cannot equal $\hat{\eta}_{\zeta,\ell}(g)$ (which does not).  On the other hand, if $b\in \PP^{m'}$ then the coset supports intertwining if and only if 
$ag_{12}\varpi^{-\ell} \equiv g_{12}\varpi^{-\ell} \mod \PP$ 
for all $g_{12}\in \PP^{m'}$.  Since $m+m'=\ell+1$ this happens if and only if $a \equiv 1 \mod \PP^m$.  Thus only the trivial double coset supports intertwining and $J(\zeta,\ell)$ is irreducible.

Now consider the case of $\K'=\Sl(2,\RR)$, and $u,u'\in \RR^\times$.   
Then an arbitrary element $g=(g_{ij})\in \intgroup'$ satisfies  $g_{11}\in \pm 1+\PP^m$, $g_{12}\in \PP^{m'}$ and $g_{11}g_{22}-g_{12}g_{21}=1$. For any $a\in \RR^\times$, $b\in \RR$, we have that such a $g$ lies in  $\intgroup\cap \prescript{\gamma'_{a,b}}{}{\intgroup'}$ if and only if
$$
{\gamma_{a,b}'}^{-1}g\gamma'_{a,b} = 
\begin{bmatrix}
    g_{11}-a^{-1}bg_{21} 
    & a^2g_{12} + 
    b\left(a(g_{11}-g_{22})-
    bg_{21}\right)\\
        a^{-2}g_{21} & 
        g_{22}+ba^{-1}g_{21}
\end{bmatrix} \in \intgroup'.
$$
As above, if $b\notin \PP^{m'}$, then we may choose $g$ so that $bg_{21}\in\PP^m$ and $b^2g_{21}\in \PP^{m'}\smallsetminus \PP^{\ell+1}$ (and $g_{11}-g_{22} \in \PP^{m'}$); for such $g$,  $\prescript{\gamma_{a,b}'}{}{\hat{\eta}_{(u',\ell)}}(g)$ depends on $g_{21}$ so the double coset supports no intertwining.  On the other hand, diagonal double cosets of the form $\gamma'_{a,0}$ support intertwining if and only if
$a^2g_{12}\varpi^{-\ell}u' \equiv ug_{12}\varpi^{-\ell} \mod \PP$ 
for all $g_{12}\in \PP^{m'}$, in other words, if and only if $a^2u' \equiv u \mod \PP^m$.  This happens if and only if $u$ and $u'$ represent the same square class modulo $\PP^m$.  Thus we may assume $u=u'\in \cS_m$, so that $a^2\equiv 1\mod \PP^m$, which by Lemma~\ref{L:squaring} is equivalent to $a \in \pm 1 + \PP^{\max\{m-e,\lceil m/2\rceil\}}$.  We conclude again that only the trivial double coset supports intertwining, and the representations $I(\zeta,u,\ell)$ are distinct and irreducible as $u$ ranges over $\cS_m$.  The final statements follows from   Lemma~\ref{countNilpOrbits} with $s=-\ell$ and $t=\lceil -\ell/2\rceil$, where $t-s=m$. 
\end{proof}

\subsection{The decomposition of \texorpdfstring{$\sigma(\ell)$}{sigmaell}, 
and the branching rules of \texorpdfstring{$\pi$}{pi}}\label{SS:brules}

Now suppose $\sigma$ is a cuspidal representation of $\Gl(2,\mathfrak{f})$; it is the Deligne--Lusztig induction from a character $\omega$ of an elliptic torus. 
The restriction of this character to the center inflates to a character of $Z_0$, and we thus identify it with a depth-zero character, also denoted $\omega$, of $\RR^\times$.  Since it has depth zero, it is trivial on $\pm 1+\PP^{\max\{m-e,\lceil m/2 \rceil\}}$. 

\begin{prop}\label{P:Jzetaellismackey}
    If $\sigma$ is the inflation of a cuspidal representation of $\Gl(2,\mathfrak{f})$ with central character $\omega$, then $J(\omega,\ell)\cong \Ind_{B_\ell}^{\K}\sigmaell$, the depth $\ell$ irreducible Mackey component  of Theorem~\ref{Thm:Hansendepthzero}. 
\end{prop}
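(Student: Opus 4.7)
The plan exploits the irreducibility of both sides. By Theorem~\ref{T:JzetaellIzetauell} (applied with $u=1$) and Theorem~\ref{Thm:Hansendepthzero}, both $J(\omega,\ell)$ and $\Ind_{B_\ell}^{\K}\sigmaell$ are irreducible representations of $\K$, so it suffices to produce a single nonzero $\K$-intertwining operator between them: any such map is automatically an isomorphism by Schur's lemma. By Frobenius reciprocity and the Mackey decomposition of $\Res_{\intgroup}\Ind_{B_\ell}^{\K}\sigmaell$, the trivial double coset of $\intgroup\backslash \K/B_\ell$ contributes a summand $\Ind_{H}^{\intgroup}(\sigmaell|_H)$ where $H := \intgroup \cap B_\ell$, reducing the problem to showing
\[
\Hom_{H}\bigl(\hat{\eta}_{\omega,\ell}|_H,\;\sigmaell|_H\bigr) \neq 0.
\]

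The first step is to identify $H$ concretely. Using $\intgroup = Z_0 U_0 \Jtype$, a direct matrix calculation shows that $H$ consists of precisely those $g \in B_\ell$ whose diagonal entries are congruent modulo $\PP^m$. For any such $g$, the conjugate $g_\ell^{-1}g g_\ell \in \K$ has lower-left entry in $\PP^\ell$, so its reduction in $\K/\K_+ \cong \Gl(2,\mathfrak{f})$ lies in the scalar-unipotent subgroup
\[
\mathsf{Z}\mathsf{U}^+ := \left\{\begin{bmatrix}\alpha & \beta\\ 0 & \alpha\end{bmatrix} : \alpha\in\mathfrak{f}^\times,\ \beta\in\mathfrak{f}\right\}.
\]
Consequently $\sigmaell|_H$ factors through the surjection $H\twoheadrightarrow \mathsf{Z}\mathsf{U}^+$, and since $\sigma|_{U^+}$ is the direct sum of all nontrivial additive characters (a standard feature of cuspidal representations), one obtains
\[
\sigma|_{\mathsf{Z}\mathsf{U}^+} = \bigoplus_{x\in\mathfrak{f}^\times}\lambda_x, \quad \text{where } \lambda_x\!\!\begin{bmatrix}\alpha & \beta\\ 0 & \alpha\end{bmatrix} = \omega(\alpha)\,\psi(x\alpha^{-1}\beta).
\]

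The final step is to identify $\hat{\eta}_{\omega,\ell}|_H$ with the $\lambda_1$-summand above. For $g\in H$ with reductions $\alpha := \bar g_{11}$ and $\beta := \overline{g_{12}\varpi^{-\ell}}$, formula~\eqref{E:hatetazetauell} specializes to
\[
\hat{\eta}_{\omega,\ell}(g) = \omega(g_{11})\,\psi(g_{11}^{-1}g_{12}\varpi^{-\ell}) = \omega(\alpha)\,\psi(\alpha^{-1}\beta),
\]
because $\omega$ has depth zero and $\psi$ has conductor $\PP$. Thus $\hat{\eta}_{\omega,\ell}|_H$ also factors through $\mathsf{Z}\mathsf{U}^+$ and coincides there with $\lambda_1$, so $\Hom_H(\hat{\eta}_{\omega,\ell}|_H,\sigmaell|_H)\neq 0$ and the proof is complete.

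The main obstacle is the Moy--Prasad bookkeeping: verifying that the balanced relation $m + m' = \ell + 1$ is exactly what forces both $\sigmaell|_H$ and $\hat{\eta}_{\omega,\ell}|_H$ to descend to the same quotient $\mathsf{Z}\mathsf{U}^+$ and to correspond there to the same irreducible constituent $\lambda_1$. This is the arithmetic content that makes $J(\omega,\ell)$ the "right" nilpotent-orbit construction recovering the Mackey component.
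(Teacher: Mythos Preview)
Your proof is correct and follows essentially the same overall strategy as the paper: both reduce via irreducibility, Frobenius reciprocity, and Mackey theory to showing that $\Hom_{H}(\hat{\eta}_{\omega,\ell},\sigmaell)\neq 0$ on $H=\intgroup\cap B_\ell$, and both identify $H$ and the relevant quotient in the same way.

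The only difference is in the execution of that final step. The paper computes the character inner product directly using the values of $\Tr(\sigmaell)$ from Table~\ref{Table:Deligne} and sums over $\mathfrak{f}^\times\times\mathfrak{f}$ to obtain the intertwining number $1$. You instead decompose $\sigma|_{\mathsf{Z}\mathsf{U}^+}$ explicitly as $\bigoplus_{x\in\mathfrak{f}^\times}\lambda_x$ (using the standard Whittaker-type fact that a cuspidal restricts to the sum of all nontrivial additive characters on the unipotent radical) and then recognize $\hat{\eta}_{\omega,\ell}|_H$ as the pullback of $\lambda_1$. This is a cleaner and more conceptual route to the same conclusion, and it makes transparent \emph{why} the intertwining number is exactly one rather than merely verifying it numerically.
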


\begin{proof}
Since $\omega$ has depth zero, $J(\omega,\ell)$ is well-defined for all $\ell>0$.   
Recall that $g_\ell = \diag(\varpi^\ell,1)$ and $B_\ell$ consists of matrices that are lower triangular modulo $\PP^\ell$. 
To prove that $J(\omega,\ell)\cong \Ind_{B_\ell}^{\K}\sigmaell$ it suffices to show that these irreducible representations intertwine; by  
 Frobenius reciprocity and Mackey theory it suffices to show that the intertwining number
 $\dim\Hom_{\intgroup\cap B_\ell}(\hat{\eta}_{\omega,\ell},\sigmaell)$ is nonzero.

Since $\intgroup\cap B_\ell=\{g=(g_{ij})\in\intgroup \mid g_{12}\in \PP^\ell\}$, we may write $g_{12}=g'_{12}\varpi^\ell$ for some $g'_{12}\in \RR$, yielding
$$
\hat{\eta}_{\omega,\ell}(g)=\omega(g_{11})\psi(g_{11}^{-1}g'_{12}).
$$
On the other hand, recalling that  $g_{11}\equiv g_{22}\mod \PP^m$ and that $\sigma$ has depth zero, we find using Table~\ref{Table:Deligne} that the character of $\sigmaell$ is given 
by
$$
\Tr(\sigmaell(g)) = \Tr(\sigma\left(\begin{bmatrix}g_{11}&g'_{12}\\0&g_{11}\end{bmatrix}\right))=\begin{cases}
    (q-1)\omega(g_{11}) & \text{if $g'_{12}\in \PP$};\\
    -\omega(g_{11}) & \text{if $g'_{12}\in \RR^\times$.}
\end{cases}
$$
We now compute the intertwining of these characters.  Since both depend only on the values of $g_{11}\in \RR^\times/(1+\PP)$ and $g'_{12}\in \RR/\PP$, we compute
\begin{align*}
\dim\Hom_{\intgroup\cap B_\ell}(\hat{\eta}_{\omega,\ell},\sigmaell) &= 
\frac{1}{q(q-1)}\left(
\sum_{g_{11}\in \mathfrak{f}^\times}
\omega(g_{11})\overline{(q-1)\omega(g_{11})}
+
\sum_{g_{11}\in \mathfrak{f}^\times, g'_{12}\in \mathfrak{f}^\times}
\omega(g_{11})\psi(g_{11}^{-1}g'_{12})\overline{(-\omega(g_{11}))}
\right)
\\
&= \frac{1}{q(q-1)}\sum_{g_{11}\in \mathfrak{f}^\times}\omega(g_{11})\overline{\omega(g_{11})}\left( q-1-\sum_{k\in \mathfrak{f}^\times}\psi(k)\right) = 1
\end{align*}
since $\omega$ is a character of $\mathfrak{f}^\times$ and $\psi$ is a nontrivial character of $\mathfrak{f}$.
\end{proof}

In particular, it follows that the degree of $J(\zeta,\ell)$ is $q^{\ell-1}(q^2-1)$ for all choices of $\zeta$, a fact one could compute directly.

\begin{cor}\label{C:Izetauell}
    Let $\ell\in \mathbb{Z}_{\geq 1}$ and let $\sigma(\ell)$ denote a Mackey component of a depth-zero supercuspidal representation of $G'$.    Then
    $$
    \sigma(\ell) \cong \bigoplus_{u\in \cS_{\lceil \ell/2 \rceil}} I(\triv, u,\ell)
    $$
    is the decomposition of $\sigma(\ell)$ into distinct irreducible subrepresentations.
\end{cor}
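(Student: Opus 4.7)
The plan is to establish for each $u \in \cS_{\lceil \ell/2 \rceil}$ that $I(\triv, u, \ell)$ occurs as a subrepresentation of $\sigma(\ell)$, and then invoke the endomorphism dimension count of Corollary~\ref{C:Sigmaell} together with the distinctness from Theorem~\ref{T:JzetaellIzetauell} to conclude that the decomposition is exactly as claimed. Write $m = \lceil \ell/2 \rceil$ throughout.

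\textbf{Step 1: Each $I(\triv, u, \ell)$ embeds in $\sigma(\ell)$.} By Frobenius reciprocity and Mackey's theorem,
\[
\Hom_{\K'}\bigl(I(\triv, u, \ell), \sigma(\ell)\bigr) \cong \bigoplus_{\gamma \in \intgroup' \backslash \K' / B'_\ell} \Hom_{\intgroup' \cap {}^\gamma B'_\ell}\bigl(\hat{\eta}'_{\triv, (u,\ell)},\, {}^{\gamma g_\ell}\sigma\bigr),
\]
so it suffices to show the identity double coset contributes a nonzero summand. For $g = (g_{ij}) \in \intgroup' \cap B'_\ell$, the defining constraints force $g_{12} \in \PP^\ell$ (write $g_{12} = g'_{12}\varpi^\ell$), the diagonal entries to lie in $\pm 1 + \PP^{\max\{m-e,\lceil m/2\rceil\}}$, and $g_{21} \in \RR$. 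Because $\zeta = \triv$ and $\pm 1 \equiv 1 \bmod \PP$ in residual characteristic two, the character $\hat{\eta}'_{\triv,(u,\ell)}$ depends only on the reduction $\bar g'_{12} \in \mathfrak{f}$, evaluating as $\psi(u \bar g'_{12})$. The trace character of ${}^{g_\ell}\sigma$ computed via Lemma~\ref{L:charactersigmaell} similarly factors through $\bar g'_{12}$ and equals $\chi(\bar g'_{12}) = \sum_{y \in \mathfrak{f}^\times}\psi(y \bar g'_{12})$. The auxiliary parameters ($g_{21}$ and the residues of the diagonal entries) are trivial for both characters and cancel upon normalization, so the intertwining number reduces to
\[
\frac{1}{q}\sum_{\bar g'_{12} \in \mathfrak{f}} \overline{\psi(u \bar g'_{12})} \sum_{y \in \mathfrak{f}^\times} \psi(y \bar g'_{12}) = \frac{1}{q}\sum_{y \in \mathfrak{f}^\times}\sum_{\bar g'_{12} \in \mathfrak{f}} \psi\bigl((y - u)\bar g'_{12}\bigr) = 1
\]
by orthogonality, since $u \in \RR^\times$ has a unique reduction in $\mathfrak{f}^\times$.

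\textbf{Step 2: Dimension count forces the decomposition.} Theorem~\ref{T:JzetaellIzetauell} gives that $\{I(\triv, u, \ell) : u \in \cS_m\}$ is a set of $|\cS_m|$ pairwise non-isomorphic irreducibles. Writing $\sigma(\ell) \cong \bigoplus_{u \in \cS_m} m_u\, I(\triv, u, \ell) \oplus R$ where $R$ shares no irreducible component with any $I(\triv, u, \ell)$, Step~1 gives $m_u \geq 1$, and semisimplicity yields
\[
\dim \End_{\K'}(\sigma(\ell)) = \sum_{u \in \cS_m} m_u^2 + \dim \End_{\K'}(R) \geq |\cS_m|.
\]
Corollary~\ref{C:Sigmaell} makes this an equality, which forces $R = 0$ and $m_u = 1$ for every $u \in \cS_m$.

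\textbf{Main obstacle.} The only place that requires care is the identification in Step~1 of the finite quotient of $\intgroup' \cap B'_\ell$ through which both characters factor, and the verification that the residues of the diagonal entries together with $g_{21}$ contribute trivially to each side. This parallels the $\Gl(2)$ computation of Proposition~\ref{P:Jzetaellismackey} (with $\omega$ replaced by $\triv$), but on $\K'$ one must additionally exploit that $\pm 1 = 1$ in $\mathfrak{f}$ to eliminate dependence on the sign ambiguity of $g_{11}$; once this is in hand the character sum over $\mathfrak{f}$ is a routine orthogonality computation.
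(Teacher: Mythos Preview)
Your proof is correct but takes a different route from the paper's. The paper proceeds by first invoking Proposition~\ref{P:Jzetaellismackey} to identify $\sigma(\ell) = \Res_{\K'} J(\omega,\ell)$, and then applies Mackey theory to the inclusion $\K' \subset \K$: since $\intgroup$ is normalized by the diagonal matrices $\diag(1,\alpha)$, the double coset space $\K'\backslash \K/\intgroup$ is represented (via the determinant) by $\{\diag(1,\alpha) \mid \alpha \in \cS_m\}$, and each resulting Mackey piece is exactly $I(\triv,\alpha,\ell)$. Thus the paper obtains the direct-sum decomposition in one stroke, and distinctness and irreducibility then follow from Theorem~\ref{T:JzetaellIzetauell}. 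By contrast, you stay entirely inside $\K'$: you directly intertwine each $I(\triv,u,\ell)$ with $\sigma(\ell)$ over $\intgroup' \cap B'_\ell$ (a computation parallel to that of Proposition~\ref{P:Jzetaellismackey}), and then use the endomorphism count of Corollary~\ref{C:Sigmaell} to force multiplicity one and exclude any extra summands. The paper's route is more structural (it explains the index set $\cS_m$ as a determinant quotient and never needs Corollary~\ref{C:Sigmaell}), while yours is more elementary and self-contained in $\Sl(2)$, at the cost of relying on the hard-won intertwining computation of Section~\ref{Sec: Branching Rules}.
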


\begin{proof}
Set $m=\lceil \ell/2 \rceil$. 
Recall that for $\K'=\Sl(2,\RR)$, and any $\ell\geq 1$, the Mackey component $\sigma(\ell)$ of \eqref{E:shorthandMackeycomponents} 
satisfies
$$
\sigma(\ell) = \Res_{\K'}\Ind_{B_\ell}^{\K}\sigmaell = \Ind_{B_\ell'}^{\K'}\sigmaell. 
$$
By Proposition~\ref{P:Jzetaellismackey}, we deduce that if $\sigma$ has central character $\omega$ then
\begin{align*}
\sigma(\ell) &\cong \Res_{\K'}J(\omega,\ell)\\
&\cong \Res_{\K'}\Ind_{\intgroup}^{\K} \hat{\eta}_{\omega,\ell}\\
&\cong \bigoplus_{\gamma\in \K'\backslash \K/\intgroup} \Ind^{\K'}_{\K'\cap \prescript{\gamma}{}{(\intgroup)}}\prescript{\gamma}{}{\hat{\eta}_{\omega,\ell}}.
\end{align*}
Since the determinant maps $\K/\intgroup$ surjectively onto $\RR^\times/(1+\PP^m)(\RR^\times)^2$,  a set of representatives for the double coset space $\K'\backslash \K/\intgroup$ is $S=\{\diag(1,\alpha)\mid \alpha \in \cS_m\}$.  Let $\gamma\in S$; then $\gamma$ normalizes $\intgroup$ so $\K'\cap \prescript{\gamma}{}{(\intgroup)}=\intgroup'$. Using \eqref{E:hatetazetauell}, and noting that $\omega(g_{11})=1$ for all  $g\in \intgroup'$, we compute
$$
\prescript{\gamma}{}{\hat{\eta}_{\omega,\ell}}(g) = \hat{\eta}_{\omega,\ell}(\gamma^{-1}g\gamma) = 
\omega(g_{11})\psi(\varpi^{-\ell}g_{11}^{-1}\alpha g_{12}) = \hat{\eta}'_{\triv, (\alpha,\ell)}(g).
$$
Therefore $\sigma(\ell) = \bigoplus_{\alpha \in \cS_m} I(\triv,\alpha,\ell)$  
and thus this is a decomposition into pairwise nonisomorphic irreducible representations of $\K'$, as required.
\end{proof}

We note that the number of components $\cS_{\lceil \ell/2\rceil}$ is precisely the intertwining number found in Corollary~\ref{C:Sigmaell}, as expected.

We may now deduce our principal theorem, which is a description of the full branching rules to $\K'$ of any  irreducible depth-zero supercuspidal representations of $G'$.

\begin{thm}\label{T:brulesSL2}
    Let $\pi$ be an irreducible depth-zero supercuspidal representation of $G'$.
    If $\pi^{\K'_+}=\{0\}$ then
    $$
    \Res_{\K'}\pi \cong \bigoplus_{\ell\in 1+2\mathbb{Z}_{\geq 0}} \bigoplus_{u\in \cS_{(\ell+1)/2}} I(\triv,u,\ell)
    $$
    are its branching rules to $\K'$.  Otherwise, $\pi=\Ind_{\K'}^{G'}\sigma$ for some cuspidal representation $\sigma$ of $\Sl(2,\mathfrak{f})$ and its branching rules are instead
    $$
    \Res_{\K'}\pi \cong \sigma \oplus \bigoplus_{\ell \in 2\mathbb{Z}_{\geq 1}}\bigoplus_{u\in \cS_{\ell/2}} I(\triv,u,\ell).
    $$
\end{thm}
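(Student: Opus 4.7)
The plan is to assemble the theorem directly by combining three results already in hand: Theorem~\ref{4.2}, Corollary~\ref{Cor:MackeycomponentsdepthzeroSL2}, and Corollary~\ref{C:Izetauell}. By Theorem~\ref{4.2}, every irreducible depth-zero supercuspidal representation $\pi$ of $G'$ is isomorphic to either $\pi_0(\sigma)$ or $\pi_1(\sigma')$ for some cuspidal representation of $\mathrm{SL}(2,\mathfrak{f})$. To match these two cases with the dichotomy in the theorem's hypothesis, I would invoke the observation (following Corollary~\ref{Cor:MackeycomponentsdepthzeroSL2}, cf.\ \cite{Latham2017}) that $\pi_1(\sigma')^{\K'_+}=\{0\}$, since $\pi_1(\sigma')$ is compactly induced from the stabilizer of the vertex $x_1$, which is not $G'$-conjugate to $x_0$; on the other hand $\pi_0(\sigma) = \cInd_{\K'}^{G'}\sigma$ has depth zero, so its restriction to $\K'$ contains $\sigma$ and in particular has nonzero $\K'_+$-fixed vectors.

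Next, I would apply Corollary~\ref{Cor:MackeycomponentsdepthzeroSL2} to produce the Mackey decompositions
\begin{equation*}
\Res_{\K'}\pi_0(\sigma) \cong \sigma \oplus \bigoplus_{\ell\in 2\Z_{\geq 1}} \sigma(\ell),\qquad \Res_{\K'}\pi_1(\sigma') \cong \bigoplus_{\ell\in 1+2\Z_{\geq 0}} \sigma(\ell),
\end{equation*}
and then feed each Mackey component into Corollary~\ref{C:Izetauell}, which gives the irreducible decomposition $\sigma(\ell)\cong \bigoplus_{u\in\cS_{\lceil \ell/2\rceil}} I(\triv,u,\ell)$. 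Substituting and noting $\lceil \ell/2\rceil=\ell/2$ when $\ell$ is even and $\lceil \ell/2\rceil=(\ell+1)/2$ when $\ell$ is odd yields the two displayed sums in the theorem.

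There is essentially no new obstacle here, as the genuinely difficult work was already carried out: the computation of the intertwining algebra $\End_{\K'}(\sigma(\ell))$ in Section~\ref{Sec: Branching Rules}, the construction of the representations $I(\triv,u,\ell)$ from nilpotent orbits in Section~\ref{SS:Iuell}, and the identification of these as exactly the irreducible constituents of $\sigma(\ell)$ via Theorem~\ref{T:JzetaellIzetauell} and Proposition~\ref{P:Jzetaellismackey}. The only subtlety worth flagging in the write-up is that the decomposition of $\sigma(\ell)$ is independent of the cuspidal representation $\sigma$ (a consequence of Lemma~\ref{L:charactersigmaell}), so the right-hand side of both displayed formulas depends on $\pi$ only through the parity of the allowed depths --- equivalently, through the conjugacy class of the vertex stabilizer from which $\pi$ is induced. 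This is precisely what makes the two dichotomous cases of the theorem well-defined and completes the argument.
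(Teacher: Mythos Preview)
Your proposal is correct and follows essentially the same approach as the paper: invoke the classification of depth-zero supercuspidals (Theorem~\ref{4.2}), match the dichotomy to whether $\pi^{\K'_+}$ vanishes, apply the Mackey decomposition of Corollary~\ref{Cor:MackeycomponentsdepthzeroSL2}, and then substitute the irreducible decomposition of each $\sigma(\ell)$ from Corollary~\ref{C:Izetauell}. The paper's own proof is just a terser version of exactly this argument.
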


\begin{proof}
    The decomposition of $\pi$ into components of the form $\sigma(\ell)$ according to parity was given in Corollary~\ref{Cor:MackeycomponentsdepthzeroSL2}. In particular, the supercuspidal representation denoted $\pi_0(\sigma)$ has fixed points under $\K'_+$ whereas $\pi_1(\sigma)$ does not.  Noting that $\sigma(0)$ is simply the inflation of $\sigma$, the rest follows from Corollary~\ref{C:Izetauell}.
\end{proof}

In summary, we have obtained the branching rules for the depth-zero supercuspidal representations of $\Sl(2,F)$ by restricting the same irreducible representation of $\K=\Gl(2,\RR)$ twice:  once,  viewing it as coming from the restriction to $\K$ of a supercuspidal representation of $\Gl(2,F)$; and the other,  using the geometry of nilpotent orbits. We illustrate this with the following extension to our earlier diagram.
\[\begin{tikzcd}
		\pi && {\displaystyle\bigoplus_{\ell\geq 0}\Ind_{B_\ell}^{\K}\sigmaell} &{\sigma \oplus \displaystyle\bigoplus_{\ell\geq 1}J(\omega,\ell)}\\
	{\displaystyle\pi_0(\sigma)\oplus \pi_1(\sigma)} && {\displaystyle \sigma \oplus \bigoplus_{\ell\geq 1}\sigma(\ell)}&{\displaystyle \sigma \oplus \bigoplus_{\ell\geq1}\bigoplus_{u\in \cS_{\lceil \ell/2\rceil}}I(\triv,u,\ell)} 
	\arrow["{\Res_{\K'}}", from=2-1, to=2-3]
	\arrow["{\Res_{\K}}", from=1-1, to=1-3]
	\arrow["{\Res_{G'}}"', from=1-1, to=2-1]
	\arrow["\cong", from=1-3, to=1-4]
	\arrow["{\Res_{\K'}}", from=1-3, to=2-3]
	\arrow["{\Res_{\K'}}", from=1-4, to=2-4]
	\arrow["\cong", from=2-3, to=2-4]
\end{tikzcd}\]
When $p$ is odd,  $\cS_m=\cS=\{1,\ep\}$ for all $m\geq 1$, and we have the same diagram for all Deligne--Lusztig supercuspidal representations.

\section{Some applications} \label{S:8}

\subsection{The growth of \texorpdfstring{$\dim(\pi^{\K'_{2n}})$}{dimpiKn}} \label{SS:8.1}

Since a depth-zero supercuspidal representation has Gelfand Kirillov dimension equal to $1$, it is known that for $n\in \mathbb{Z}$, the value $\dim(V^{K_{2n}})$ asymptotically grows like a polynomial in $q$ of degree $2n+c$ for some constant $c$, up to lower order terms   \cite{BarbaschMoy1997}.
In fact, this polynomial was computed for all irreducible representations of $\Sl(2,F)$, for any $F$, by Henniart and Vign\'eras in \cite[Theorem 7.9]{HenniartVigneras2025}.   
We can recover their formula using Theorem~\ref{Cor:MackeycomponentsdepthzeroSL2}, as follows. Since every irreducible component of $\sigma(\ell)$ has depth $\ell$, we have $\sigma(\ell)\subset \pi^{\K'_{2n}}$ if and only if $\ell<2n$.  Thus for every $n>0$, $\pi_0(\sigma)^{V_{\K'_{2n}}}=\sigma \oplus \bigoplus_{i=1}^{n-1}\sigma(2i)$, whose dimension is $q-1 + (q^2-1)\sum_{i=1}^{n-1}q^{2i-1}=q^{2n-1}-1$, whereas $\pi_1(\sigma)^{\K'_{2n}}=\bigoplus_{i=0}^{n-1}\sigma(2i+1)$, whose dimension is $q^{2n}-1$.

A deeper feature is the growth in the dimensions of the irreducible components.  

\begin{lem}\label{degInducedofNilp}
    For any $\ell\geq 1$, $u\in \cS_m$ and character $\zeta$ of $\RR^\times$ of depth less than $m$, the degree of the representation $I(\zeta,u,\ell)$ is 
    $$\deg I(\zeta, u,\ell)=\begin{cases}
        q^2 - 1 & \text{if $\ell = 1;$}\\
        (q^2 -1)\cdot q^{\ell-1-\lfloor (\ell+1)/4\rfloor} & \text{if $1<\ell \leq 4e$};\\
        \frac12(q^2-1)\cdot q^{\ell-1-e} & \text{if  $\ell \geq 4e+1$}.
    \end{cases}
    $$
\end{lem}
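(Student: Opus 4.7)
The approach is to observe that the heavy lifting has already been done by the time we state this lemma: the representation $I(\zeta,u,\ell) = \Ind_{\Gamma(\ell)'}^{\K'}\hat{\eta}'_{\zeta,(u,\ell)}$ is induced from a one-dimensional character, so its degree is simply the index $[\K' : \Gamma(\ell)']$. In particular, this index is independent of the choice of $\zeta$ and $u$, so for any fixed $\ell$ the degree $\deg I(\zeta,u,\ell)$ coincides with $\deg I(\triv, u, \ell)$. The plan is to compute this common index indirectly by leveraging Corollary~\ref{C:Izetauell}.

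Specifically, Corollary~\ref{C:Izetauell} gives the decomposition
\[
\sigma(\ell) \cong \bigoplus_{u\in \cS_{\lceil \ell/2\rceil}} I(\triv, u, \ell)
\]
into pairwise non-isomorphic irreducible subrepresentations. By Clifford theory applied to the normal inclusion $\K' \triangleleft \K$ (as recorded at the end of Corollary~\ref{Cor:MackeycomponentsdepthzeroSL2}), all these summands have the same degree. Since Corollary~\ref{Cor:MackeycomponentsdepthzeroSL2} also supplies $\deg \sigma(\ell) = q^{\ell-1}(q^2-1)$, we immediately obtain
\[
\deg I(\zeta, u, \ell) \;=\; \frac{q^{\ell-1}(q^2-1)}{|\cS_{\lceil \ell/2 \rceil}|}.
\]

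It remains only to substitute the value of $|\cS_{\lceil \ell/2\rceil}|$ from Lemma~\ref{valueOfSellK}: this cardinality is $q^{\lfloor (\ell+1)/4\rfloor}$ when $\lceil \ell/2\rceil \leq 2e$ (equivalently, $\ell \leq 4e$) and equals $2q^e$ when $\lceil \ell/2\rceil > 2e$ (equivalently, $\ell \geq 4e+1$), using the identity $\lfloor \lceil \ell/2\rceil/2 \rfloor = \lfloor (\ell+1)/4\rfloor$. Plugging these into the displayed formula gives the three advertised cases; the $\ell=1$ line is in fact subsumed by the second formula (where $|\cS_1|=1$) but is singled out for emphasis. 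There is no genuine obstacle here, as the content of the result has been packaged into the decomposition result Corollary~\ref{C:Izetauell} and the counting Lemma~\ref{valueOfSellK}; the proof is purely a consolidation step.
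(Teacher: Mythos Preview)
Your proposal is correct and follows essentially the same approach as the paper: both compute $\deg I(\zeta,u,\ell)=\deg(\sigma(\ell))/|\cS_{\lceil\ell/2\rceil}|$ by invoking Corollary~\ref{C:Izetauell} together with the known degree of $\sigma(\ell)$, then substitute the value of $|\cS_{\lceil\ell/2\rceil}|$ (the paper cites Corollary~\ref{C:Sigmaell}, you cite Lemma~\ref{valueOfSellK}, which amounts to the same computation). Your slightly more explicit justification that all summands share the same degree---via the index $[\K':\Gamma(\ell)']$ and Clifford theory---is a welcome elaboration of what the paper leaves implicit.
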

\begin{proof}
We have $\deg I(\zeta,u,\ell)=\deg I(\triv,u',\ell)$ for all characters $\zeta$ and choices $u,u'\in \cS_m$.  By Corollary~\ref{C:Izetauell}, $\deg I(\triv,u,\ell)=\deg(\sigma(\ell))/\Sigma(\ell)$, where $\Sigma(\ell)=|\cS_m|$ was computed in Corollary~\ref{C:Sigmaell}.  A quick calculation yields this form.
 \end{proof}

Observe that for the same depth $\ell$, the dimensions of these representations  for $2$-adic fields $F$ eventually (that is, for $\ell\geq 4e+1$) grow as $\frac12(q^2-1)q^{\ell-1-e}\approx q^{\ell-e+1}$, which is much larger than the dimensions of the corresponding representations for fields $\Fqt$, which is only $(q^2-1)q^{\ell-1-\lfloor (\ell+1)/4\rfloor} \approx q^{\frac34\ell+1}$.   This reflects that for $\ell \geq 4e+1$, the character $\eta_{(u,\ell)}$ extends to a much larger subgroup when $\car(F)=0$ than it can when $\car(F)=2$.

\begin{prop}\label{P:dn}
    Let $n\in \mathbb{Z}_{>0}$ and let $d(n)$ denote the dimension of the largest irreducible component of $\pi^{\K'_n}$, where $\pi$ is an irreducible depth-zero supercuspidal representation of $\Sl(2, F)$. 
    Then we have 
    \begin{equation*}
        \frac{d(n+4)}{d(n)} = \begin{cases}
            q^3 & \text{ if $n \leq 4e - 3$}\\
            \frac{1}{2}q^3 & \text{ if $n \in \{4e - 2, 4e-1\}$}\\
            \frac{1}{2}q^4 & \text{ if $n \in \{4e, 4e+1\}$}\\
            q^4 & \text{ if $n \geq 4e+2$.}
        \end{cases}
    \end{equation*}
    Moreover, this rate of growth satisfies $\{d(n+2)/d(n), d(n)/d(n-2)\} = \{q,q^2\}$ for $3\leq n\leq 4e-1$, whereas for $n \geq 4e + 2$, we have instead that $d(n+2)/d(n) = q^2$.
\end{prop}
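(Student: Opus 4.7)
The plan is to reduce the proposition to a direct computation using the explicit dimension formula from Lemma~\ref{degInducedofNilp}. By Theorem~\ref{T:brulesSL2}, every irreducible subrepresentation of $\pi^{\K'_n}$ is either the inflation $\sigma$ (when $\pi=\pi_0(\sigma)$) or one of the representations $I(\triv,u,\ell)$ with $\ell<n$ of the appropriate parity. Write $f(\ell) := \dim I(\triv,u,\ell)$, which by Lemma~\ref{degInducedofNilp} depends only on $\ell$. A direct check of the piecewise formula shows that $f$ is non-decreasing: in the range $1<\ell\leq 4e$, the exponent $\ell-1-\lfloor(\ell+1)/4\rfloor$ is weakly increasing (it increases by $0$ or $1$ as $\ell$ advances by $1$); and across the transition one verifies $f(4e+1)/f(4e) = q/2 \geq 1$. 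It follows that $d(n)=f(L(n))$, where $L(n)\in\{n-1,n-2\}$ is the largest integer less than $n$ of the correct parity, and $L(n+k) = L(n)+k$ for any even $k>0$.

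Next I would compute $f(\ell+4)/f(\ell)$ in the four cases corresponding to the position of $\ell$ relative to $4e$. When both $\ell,\ell+4\leq 4e$, the identity $\lfloor(\ell+5)/4\rfloor = \lfloor(\ell+1)/4\rfloor+1$ yields an exponent difference of $3$, so the ratio is $q^3$. When both $\ell,\ell+4 \geq 4e+1$, the second formula gives $q^4$ immediately. In the transition range $4e-3\leq\ell\leq 4e$, combining the two formulas gives
\begin{equation*}
\frac{f(\ell+4)}{f(\ell)} \;=\; \tfrac{1}{2}\, q^{4-e+\lfloor(\ell+1)/4\rfloor},
\end{equation*}
which evaluates to $\tfrac{1}{2}q^3$ for $\ell\in\{4e-3,4e-2\}$ (where $\lfloor(\ell+1)/4\rfloor = e-1$) and to $\tfrac{1}{2}q^4$ for $\ell\in\{4e-1,4e\}$ (where $\lfloor(\ell+1)/4\rfloor = e$). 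Translating these $\ell$-ranges back to ranges of $n$ via $L(n)\in\{n-1,n-2\}$ yields precisely the four cases in the stated formula.

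For the remaining claim about $\{d(n+2)/d(n),\,d(n)/d(n-2)\}$, I would carry out the analogous computation with shift $+2$. For $2\leq\ell\leq 4e-2$ one finds
\begin{equation*}
\frac{f(\ell+2)}{f(\ell)} \;=\; q^{\,2-(\lfloor(\ell+3)/4\rfloor - \lfloor(\ell+1)/4\rfloor)},
\end{equation*}
which equals $q^2$ when $\ell\equiv 0,3\pmod 4$ and $q$ when $\ell\equiv 1,2\pmod 4$. Since $L(n)$ and $L(n)-2=L(n-2)$ always fall in the opposite pair of residue classes modulo $4$, the two ratios $f(L(n)+2)/f(L(n))$ and $f(L(n))/f(L(n)-2)$ consist of one $q$ and one $q^2$, proving the unordered-set identity for $3\leq n\leq 4e-1$. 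For $n\geq 4e+2$ both $L(n)$ and $L(n)+2$ are at least $4e+1$, and the second formula gives $f(\ell+2)/f(\ell)=q^2$ uniformly.

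The work is almost entirely bookkeeping: matching each range of $\ell=L(n)$ to the stated range of $n$ while respecting the parity of the depths of components, and verifying that the four-term transition window $\ell\in\{4e-3,\ldots,4e\}$ splits into exactly two consecutive pairs producing the ratios $\tfrac{1}{2}q^3$ and $\tfrac{1}{2}q^4$. The only input one must handle with care is the floor-function difference $\lfloor(\ell+k)/4\rfloor-\lfloor(\ell+1)/4\rfloor$, which is resolved by a routine reduction modulo $4$.
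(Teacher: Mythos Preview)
Your approach is the same as the paper's: both reduce the statement to Lemma~\ref{degInducedofNilp} and compute $f(\ell+4)/f(\ell)$ in the four regimes determined by the position of $\ell$ relative to $4e$. The paper simply asserts $d(n)=\dim I(\triv,1,n-1)$, i.e.\ it takes $L(n)=n-1$ without comment; under that substitution your transition formula $\tfrac12 q^{4-e+\lfloor(\ell+1)/4\rfloor}$ is exactly the paper's $\tfrac12 q^{4-e+\lfloor n/4\rfloor}$, and the rest of your bookkeeping matches.

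One small overclaim: your assertion that translating the $\ell$-ranges back via $L(n)\in\{n-1,n-2\}$ ``yields precisely the four cases in the stated formula'' is only valid for $L(n)=n-1$. If $L(n)=n-2$ then, for example, $n=4e$ gives $\ell=4e-2$, which falls in your $\tfrac12 q^3$ window, while the proposition assigns $\tfrac12 q^4$ to $n=4e$. The case boundaries in the statement are tuned to $L(n)=n-1$, which is exactly what the paper assumes; you should either adopt that convention directly or note that the boundaries shift by one when $L(n)=n-2$.
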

\begin{proof}
    Observe that by Theorem~\ref{T:brulesSL2}, we have $d(n) = \dim(I(\triv, 1, n-1))$. By Lemma~\ref{degInducedofNilp} we have that the degrees of $I(\triv, 1, n-1)$ and $I(\triv, 1, n+3)$ are given by the same formula when either $n+3 \leq 4e$ or $n-1 \geq 4e+1$; a quick application of Lemma~\ref{degInducedofNilp} achieves the result. 
    For the remaining intermediate cases, we have $d(n+4)/d(n)=\frac12 q^{4-e + \lfloor n/4 \rfloor}$.  If $n \in \{4e - 2, 4e- 1\}$, then $\lfloor n/4 \rfloor=e-1$ so this value is $\frac12q^3$ whereas if  $n \in \{4e, 4e+1\}$, then $\lfloor n/4 \rfloor=e$, yielding $\frac12q^4$. 
\end{proof}

In contrast, when $p$ is odd the growth rate of irreducible subrepresentations is $d(n+2)/d(n)=q^2$ for all $n\geq 1$ \cite[\S4]{Nev}.

\subsection{A representation-theoretic local character expansion} \label{SS:LCE}

The Harish-Chandra--Howe local character expansion exists when $\car(F)=0$ \cite{HarishChandra1999} or when $p$ is (very) large \cite{CluckersGordonHalupczok2014}.
It asserts that in a neighbourhood of the identity where the exponential map (or substitute) converges, the trace character of an admissible representation $\pi$ can be written as a linear combination of Fourier transforms of the (finitely many) nilpotent orbital integrals.  The maximal nilpotent orbits to occur with nonzero coefficients are called the wavefront set of $\pi$.   The domain of validity of this expansion is also known when the residual characteristic $p$ is sufficiently large \cite{DeBack1}, when it is $\cup_{x\in \buil(G)}G_{x,r+}$ where $r$ is the depth of $\pi$. 

Recent work in \cite{Nevins2024} proposes a representation-theoretic version of the local character expansion: for $\Sl(2,F)$ with $p\neq 2$, \cite[Theorem 7.4]{Nevins2024} explicitly expresses the restriction of any irreducible representation $\pi$ to a sufficiently small open subgroup as a linear combination of representations of that subgroup associated to nilpotent orbits in the Lie algebra.  In that case, the radius of convergence $G_{x,r+}$ was the same as that of the local character expansion, and the wavefront sets coincide.

In the next three subsections, we extend this result for depth-zero supercuspidal representations when $p=2$.  

\subsubsection{The LCE for \texorpdfstring{$\Sl(2,F)$}{SL2F} when \texorpdfstring{$F$}{F} has characteristic zero} \label{SS:LCE1}

 We begin by proving that the analogous expansion holds for depth-zero supercuspidal representations over $2$-adic fields with radius of convergence $G'_{x_0,4e+}$.  
 
\begin{dfn}\label{D:tauozeta}
    For each $u \in \cS \cup \varpi \cS$ and character $\zeta$ of $Z'$, define the representation of $\K'$ attached to a nilpotent orbit $\cO_u$ and central character $\zeta$ to be 
    \begin{equation*}
        \tau(\cO_u, \zeta) = \bigoplus_{\ell \in  -\val(u)+2\Z_{\geq 1} } I(\zeta, u, \ell).
    \end{equation*}
    When $\zeta=\triv$, or when the choice of character is irrelevant, we omit $\zeta$ from the notation, writing $\tau(\cO_u)$ instead.
\end{dfn}

Note that these are infinite-dimensional representations of $\K'$ that are constructed from the $\K'$-orbits of negative depth appearing in the $G'$-orbit $\mathcal{O}$.  They are not disjoint:  when $u\equiv  u'\in \cS_{\lceil \ell/2\rceil}$ --- equivalently, if $X_{u\varpi^{-\ell}}$ and $X_{u'\varpi^{-\ell}}$ represent $\K'$-conjugate degenerate $(-\ell,-\ell/2)$ cosets at $x_0$ --- 
we have $I(\zeta,u,\ell)\cong I(\zeta,u',\ell)$ by Theorem~\ref{T:JzetaellIzetauell}.

\begin{lemma}
    The representation $\tau(\mathcal{O},\zeta)$ is independent of the choice of representatives for $\mathcal{O}$ and of its $\K'$ orbits of negative depth, up to equivalence. 
\end{lemma}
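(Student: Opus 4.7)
The plan is to verify two invariance properties: (i) the index set of the direct sum in Definition~\ref{D:tauozeta} is intrinsic to the $G'$-orbit $\cO$, and (ii) each summand $I(\zeta,u,\ell)$ depends only on a specific $\K'$-orbit in $\cO$ (and on $\zeta$), not on the particular representative used to define it.

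First I would argue that the index set $-\val(u)+2\Z_{\geq 1}$ depends only on $\cO$. Any two representatives of $\cO$ in $F^\times$ differ by an element of $(F^\times)^2$, so their valuations share the same parity; hence $\{\ell>0 : \ell \equiv -\val(u)\bmod 2\}$ is intrinsic to $\cO$. By Lemma~\ref{vertexnilpotent}, this is precisely the set of positive integers $\ell$ for which $\cO$ contains elements of depth $-\ell$ at $x_0$, and by the decomposition \eqref{E:K'orbits} there is a unique $\K'$-orbit realizing each such depth.

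Next I would show that each summand $I(\zeta,u,\ell)$ is determined by this unique depth-$(-\ell)$ $\K'$-orbit of $\cO$. Let $X_v$ and $X_{v'}$ be two representatives of it, written as $v = u\varpi^{-\ell}$ and $v' = u'\varpi^{-\ell}$ with $u,u'\in \RR^\times$. Applying \eqref{Eq:conjXu} to some $g\in\K'$ with ${}^g X_v = X_{v'}$, the requirement that the upper-right entry of ${}^g X_v$ vanish forces $b=0$ in the parametrization, whence $v' = a^2 v$ for some $a\in \RR^\times$; thus $u'\in u(\RR^\times)^2$, and in particular $u\equiv u'$ in $\cS_m$ for every $m\geq 1$. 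Theorem~\ref{T:JzetaellIzetauell} then yields $I(\zeta,u,\ell)\cong I(\zeta,u',\ell)$.

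Putting (i) and (ii) together, the direct sum defining $\tau(\cO,\zeta)$ is, up to isomorphism, independent of the choice of representative $u\in \cS\cup\varpi\cS$ of $\cO$ and of the choices of representatives for its $\K'$-orbits of negative depth. I do not anticipate a serious obstacle: the argument is essentially an assembly of previously established facts, namely the orbit decomposition \eqref{E:K'orbits}, the conjugation formula \eqref{Eq:conjXu}, and the intertwining calculation in Theorem~\ref{T:JzetaellIzetauell}. The only point requiring care is the mild notational subtlety when $u\in\varpi\cS$: the formula $I(\zeta,u,\ell)$ should be read via the unit part of a depth-$(-\ell)$ representative of the relevant $\K'$-orbit, which (as shown above) is well-defined modulo $(\RR^\times)^2$ and hence modulo the coarser equivalence $\cS_m$ governing the isomorphism class of $I(\zeta,u,\ell)$.
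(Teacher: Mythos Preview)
Your proposal is correct and follows essentially the same approach as the paper: both use Lemma~\ref{vertexnilpotent} to identify the admissible depths, invoke the $\K'$-orbit decomposition \eqref{E:K'orbits}, and then argue that the unit $u$ attached to the depth-$(-\ell)$ $\K'$-orbit is determined up to $(\RR^\times)^2$. The only minor difference is that the paper observes directly that $u\in\cS$ is \emph{uniquely} determined by $\cO$ (since $\cS$ represents $\RR^\times/(\RR^\times)^2$), whereas you pass through the slightly weaker statement $u'\in u(\RR^\times)^2$ and then appeal to Theorem~\ref{T:JzetaellIzetauell}; both routes are valid and yield the same conclusion.
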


\begin{proof}
Let $\mathcal{O}$ be a nilpotent orbit.  By Lemma~\ref{vertexnilpotent}, 
its elements have even depth at one of the two 
conjugacy 
classes of vertices, and odd depth at the other.    
For each depth $-\ell$ of the correct parity, choose an element $X \in \cO$ of depth $-\ell$;  then by \eqref{E:K'orbits}, it is $\K'$-conjugate to 
$X_{u\varpi^{-\ell}}$ for some $u\in \cS$.  Since $X_{u\varpi^i}$ is $G'$-conjugate to $X_{u'\varpi^j}$ if and only if $|i-j|\in 2\mathbb{Z}$ and  $u\equiv u'\in \cS$, the datum $(u,\ell)$ is completely determined by $\mathcal{O}$.  
\end{proof}

Under the hypothesis $p>3e+1$ (and specifically $p\neq 2$), Barbasch and Moy \cite[Theorem 4.5]{BarbaschMoy1997} compute the wavefront set of a depth-zero representation in terms of the Gelfand--Graev representations in which the components of $\pi^{G'_{x,0+}}$ appear.  For $p=2$ (and $\car(F)=0$) we do not know of a reference for a comparable computation.  We propose the following, which coincides with the wavefront sets of Deligne--Lusztig depth-zero supercuspidal representations for $p$ odd, and is well-defined in any characteristic.

\begin{dfn}\label{D:WFpi}
    Let $\sigma$ be a cuspidal representation of $\Sl(2,\mathfrak{f})$.  Then for $i\in \{0,1\}$, let $\WF(\pi_i(\sigma))$ denote the set of nilpotent orbits attached to $x_i$, or equivalently, the set of nilpotent orbits with parity $i+2\mathbb{Z}$ at $x_0$.
\end{dfn}

This definition is justified by the following theorem. 

\begin{thm}\label{T:LCE}
    Suppose $\car(F)=0$.
    Let $\pi$ be an irreducible supercuspidal representation of $G'$ of depth zero.  Then there exists an integer $n_\pi$ such that in the Grothendieck group of representations, we have
    $$
    \Res_{\K'_{4e+1}}\pi \cong n_\pi \triv + \sum_{\mathcal{O}\in \WF(\pi)} \Res_{\K'_{4e+1}}\tau(\mathcal{O}).
    $$
    Moreover, $4e+1 \geq 5$ is the least depth for which this isomorphism holds.
\end{thm}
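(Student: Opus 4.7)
The plan is to compare both sides of the identity term-by-term using Theorem~\ref{T:brulesSL2} and Definition~\ref{D:tauozeta}, reducing the whole problem to a single conductor estimate.  By Theorem~\ref{T:brulesSL2}, $\Res_{\K'}\pi_i(\sigma)$ equals the cuspidal summand $\sigma$ (when $i=0$) together with $\bigoplus_{\ell\geq 1,\,\ell\equiv i\,(2)}\bigoplus_{u\in \cS_{\lceil \ell/2\rceil}} I(\triv,u,\ell)$.  After matching each $\cO\in\WF(\pi_i)$ with the unique $\K'$-orbit of representatives $X_{u\varpi^{-\ell}}$, $u\in\cS$, that it contains at each negative depth $-\ell$ of the correct parity at $x_0$, Definition~\ref{D:tauozeta} gives $\sum_{\cO\in\WF(\pi_i)}\tau(\cO)\cong\bigoplus_{\ell\geq 1,\,\ell\equiv i\,(2)}\bigoplus_{u\in \cS}I(\triv,u,\ell)$.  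By Theorem~\ref{T:JzetaellIzetauell}, each isomorphism class of $I(\triv,\cdot,\ell)$ appears once on the left and $|\cS|/|\cS_{\lceil\ell/2\rceil}|$ times on the right; Lemma~\ref{valueOfSellK} then gives $\cS_{\lceil\ell/2\rceil}=\cS$ as soon as $\ell\geq 4e+1$, so the two sides already agree as $\K'$-representations at those depths.

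The crucial step is the claim that $\Res_{\K'_{4e+1}} I(\triv,u,\ell) \cong \dim(I(\triv,u,\ell))\cdot\triv$ for every $1\leq \ell\leq 4e$.  I would prove this by applying Mackey theory to the inducing datum $I(\triv,u,\ell)=\Ind_{\intgroup'}^{\K'}\hat{\eta}'_{\triv,(u,\ell)}$.  When $\ell\leq 4e$ we have $m'=\lfloor\ell/2\rfloor+1\leq 2e+1\leq 4e+1$, so $\K'_{4e+1}\subseteq \K'_{m'}\subseteq \intgroup'$; normality of $\K'_{4e+1}$ in $\K'$ then forces $\K'_{4e+1}\cap\prescript{g}{}{\intgroup'}=\K'_{4e+1}$ for every $g$, and Mackey degenerates to
$$
\Res_{\K'_{4e+1}} I(\triv,u,\ell)\ \cong\ \bigoplus_{g\in \K'/\intgroup'}\prescript{g}{}{\hat{\eta}'_{\triv,(u,\ell)}}\big|_{\K'_{4e+1}}.
$$
For $h\in \K'_{4e+1}$ we have $h_{11}\in 1+\PP^{4e+1}$ and $h_{12}\in\PP^{4e+1}$, so $u\varpi^{-\ell}h_{11}^{-1}h_{12}\in \PP^{4e+1-\ell}\subseteq \PP$, which is annihilated by $\psi$ (of conductor $\PP$).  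Each conjugate $\prescript{g}{}{\hat{\eta}'_{\triv,(u,\ell)}}$ is likewise trivial on $\K'_{4e+1}$ since $g^{-1}hg\in \K'_{4e+1}$.  Combined with $\sigma|_{\K'_{4e+1}}=(\dim\sigma)\triv$ (as $\sigma$ has depth zero and $\K'_{4e+1}\subseteq \K'_+$), the total discrepancy between the two sides collapses in the Grothendieck group to a finite integer multiple of $\triv$, producing the integer $n_\pi$ of the statement.

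For the minimality claim, take $\pi=\pi_0(\sigma)$ and consider depth $\ell=4e$: the right-hand side carries $|\cS|-|\cS_{2e}|=q^e>0$ extra copies of each $I(\triv,u,4e)$.  Running the same Mackey argument on $\K'_{4e}$ rather than $\K'_{4e+1}$, the conductor estimate breaks precisely at this depth: for $h\in \K'_{4e}$ the quantity $u\varpi^{-4e}h_{12}$ ranges over $\RR$, and $\psi|_\RR$ is a nontrivial additive character of $\mathfrak{f}$.  Hence $\Res_{\K'_{4e}}I(\triv,u,4e)$ contains nontrivial characters of $\K'_{4e}$, and the depth-$4e$ discrepancy is no longer a multiple of $\triv$, so the identity fails at $\K'_{4e}$.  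The main obstacle is really only bookkeeping: reconciling the indexing set $\cS\cup\varpi\cS$ of Definition~\ref{D:tauozeta} with the unit parameters $u\in\cS$ used in the construction of $I(\triv,u,\ell)$, so that each right-hand summand is attributed to precisely one orbit of $\WF(\pi_i)$.  Once that identification is fixed, everything rests on the single inequality $4e+1-\ell\geq 1$.
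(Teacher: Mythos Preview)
Your proposal is correct and follows essentially the same route as the paper: decompose both sides using Theorem~\ref{T:brulesSL2} and Definition~\ref{D:tauozeta}, observe via Lemma~\ref{valueOfSellK} that the multiplicities match once $\ell\geq 4e+1$, and note that the low-depth discrepancies restrict trivially to $\K'_{4e+1}$.  Your Mackey computation for that last step is more than is needed---the paper simply invokes that each $I(\triv,u,\ell)$ has depth $\ell$ (Corollary~\ref{Cor:MackeycomponentsdepthzeroSL2}), hence is trivial on $\K'_{\ell+1}\supseteq\K'_{4e+1}$ when $\ell\leq 4e$---but your direct verification via the conductor of $\psi$ reaches the same conclusion and makes the minimality argument at depth $4e$ more transparent.
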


\begin{proof}
An irreducible supercuspidal representation has the form $\pi = \pi_i$ with $i\in \{0,1\}$, where $\pi_i$ is compactly induced from $G'_{x_i}$.  Thus by Lemma~\ref{vertexnilpotent}, $\WF(\pi_i)$ is the set of nilpotent orbits with parity $i+2\Z$ at $x_0$.  By Theorem~\ref{T:brulesSL2}, $\Res_{\K'}\pi_i$ has an expansion in terms of representations $I(\triv,u,\ell)$ where the parity of $\ell$ agrees with $i$.  Consequently, the corresponding nilpotent orbits $\cO_{u\varpi^{-\ell}}$ lie in $\WF(\pi_i)$.  

Recall that $\cS_m=\cS$ if and only if $m\geq 2e+1$.  For all $\ell\geq 4e+1$, we have $m=\lceil \ell/2\rceil \geq 2e+1$, so by Theorem~\ref{T:JzetaellIzetauell} the  representations
$I(\triv,u,\ell)$, as $u$ ranges over $\cS$, are distinct.
Thus the set of components of each depth $\ell\geq 4e+1$ are precisely the $|\cS|=|\WF(\pi_i)|$ distinct components of depth $\ell$ in the sum $\bigoplus_{\cO\in \WF(\pi_i)}\tau(\cO,\triv)$.  

When $\ell\leq 4e$,  however, there will be elements $u\neq u'\in \cS$ such that $u\equiv u' \in \cS_{\lceil \ell/2\rceil}$.  In this case, the representation $I(\zeta,u,\ell)$ occurs only once in $\Res_{\K'}\pi$ but at least  twice in $\bigoplus_{\mathcal{O}\in \WF(\pi)} \tau(\mathcal{O},\zeta)$.  On the other hand, since $I(\zeta,u,\ell)$ has depth $\ell\leq 4e$, its restriction to $\K'_{4e+1}$ is trivial.  It follows that the restrictions to $\K'_{\ell}$ of the two sides do not agree, for any $\ell\leq 4e$.   

Thus upon restriction to $\K'_{4e+1}$, we obtain the desired equality in the Grothendieck group by setting
\begin{equation}\label{E:formulanpi}
n_{\pi_i} = \dim(\pi_i^{\K'_{4e+1}})- \sum_{j=1}^{2e}\sum_{u\in \cS}\dim I(\triv,u,2j-i) <0.
\end{equation}
At depth $4e+1$, the two characters of $Z'$ coincide, so we write simply $\tau(\cO)$ for $\tau(\cO,\triv)$. 
\end{proof}

The integer $n_\pi$ is readily computable; see for example \eqref{E:q2calcnpi} below for the case $F_0=\mathbb{Q}_2$. 

Theorem~\ref{T:LCE} is a representation-theoretic analogue of the local character expansion, in the sense that it is an equality of representations in a neighbourhood of the identity, whose trace recovers the local character expansion where this exists.

\begin{rem}
    The analogous statement is proven to hold for all local nonarchimedean fields $F$ with odd residual characteristic in \cite[Theorem 1.1]{Nevins2024}.  There, since $e=1$, we have $\K_{4e+1}'=\K_1'=\K'_+$, which coincides with the domain of convergence of the local character expansion.    
\end{rem}

\subsubsection{The case of \texorpdfstring{$\Gl(2,F)$}{GL2F}, for any \texorpdfstring{$F$}{F}} \label{SS:GL2LCE}
The approach of the preceding subsection also applies to the depth-zero supercuspidal representations of $G=\Gl(2,F)$, for both $\car(F)=0$ and $\car(F)=2$.  In this case, there is only one nonzero nilpotent orbit $\cO$, giving rise to one representation
$$
\tau_{\Gl}(\cO,\omega)=\bigoplus_{\ell\in \mathbb{Z}_{>0}}J(\omega,\ell)
$$
for each character $\omega$ of $Z_0$ of depth zero. Then Theorem~\ref{Thm:Hansendepthzero} implies that if $\pi$ is a depth-zero supercuspidal representation of $\Gl(2,F)$, then
$$
\Res_{\K_+}\pi \cong (1-q)\triv \oplus \Res_{\K_+} \tau_{\Gl}(\cO,\triv).
$$
That is, the representation-theoretic version of the local character expansion holds at depth zero
(as does the local character expansion itself, with a mock exponential map in place of $\exp$ \cite{Lemaire1996}).  Alternatively, if the central character of $\pi$ is $\omega$, then we can express the branching rules in this case as
$$
\Res_{\K}\pi \cong \pi^{\K_+} \oplus \tau_{\Gl}(\cO,\omega).
$$
It is this perspective we apply to $\Sl(2,\Fqt)$ in the next section.

\subsubsection{A local expansion for \texorpdfstring{$\Sl(2,F)$}{SL2F} for all \texorpdfstring{$F$}{F}} \label{SS:LCE2}
We now turn to an alternative formulation of the local behaviour of the representations $\pi$.  This one is valid also in the case of $\car(F)=2$, where the local character expansion does not exist and it is even unknown if the nilpotent orbital integrals converge. 

We begin by appropriately collecting the ``close cousins" described in Lemma~\ref{countNilpOrbits}.  That is, for each fixed pair $(u,\ell)$, let $\WF(u,\ell)$ be the set of nilpotent $G'$-orbits meeting $X_{u\varpi^{-\ell}}+\g'_{x_0,-\ell/2}$. Then the following set $T_{u,\ell}$ indexes the \emph{distinct} representations $I(\zeta,u',\ell')$ of depth $\ell'\geq \ell$ occuring in $\bigoplus_{\cO\in \WF(u,\ell)}\tau(\cO,\zeta)$.

\begin{lem}\label{L:Tuell}
    Let $\ell\geq 1$ and $u\in \cS_{\lceil \ell/2 \rceil}$.  Then the set
    $$T_{u,\ell}=\{(u',\ell')\mid \ell'-\ell\in 2\mathbb{Z}_{\geq 0}, u'\in \cS_{\lceil \ell'/2\rceil}, u\equiv u'\mod \PP^{\lceil \ell/2 \rceil}\}$$ indexes all the nilpotent $\K'$-orbits of depths $-\ell'\leq -\ell$, up to equivalence modulo depth $-\ell'/2$, whose corresponding $G'$-orbits meet the $(-\ell,-\ell/2)$ degenerate coset of $X_{u\varpi^{-\ell}}$ at $x_0$.
\end{lem}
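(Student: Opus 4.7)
The plan is to enumerate the $\K'$-orbits described in the statement by combining the $\K'$-orbit decomposition \eqref{E:K'orbits} with two applications of Lemma~\ref{countNilpOrbits}. Applied first with $s=-\ell$ and $t=\lceil -\ell/2\rceil$ (so that $t-s=\lceil\ell/2\rceil$ regardless of the parity of $\ell$), Lemma~\ref{countNilpOrbits} identifies the $G'$-orbits meeting $X_{u\varpi^{-\ell}}+\g'_{x_0,-\ell/2}$ as $\{\cO_{u'\varpi^{-\ell}} : u'\in \cS,\ u'\equiv u \in \cS_{\lceil\ell/2\rceil}\}$, fixing the allowed square classes of $u'$.

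Next, for each such $G'$-orbit, Lemma~\ref{vertexnilpotent} restricts the depths at $x_0$ of its elements to a single parity class, and since depth $-\ell$ already occurs, these depths are precisely $\{-\ell' : \ell'-\ell\in 2\mathbb{Z}\}$. The decomposition \eqref{E:K'orbits} then exhibits a unique $\K'$-orbit of each such depth, represented by $X_{u'\varpi^{-\ell'}}$. Thus, prior to quotienting, the $\K'$-orbits in question are indexed by pairs $(u',\ell')$ with $\ell'-\ell\in 2\mathbb{Z}_{\geq 0}$, $u'\in \cS$, and $u'\equiv u\mod \PP^{\lceil\ell/2\rceil}$.

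A second invocation of Lemma~\ref{countNilpOrbits}, now at $s=-\ell'$ (so that $t-s=\lceil\ell'/2\rceil$), identifies two $\K'$-orbits $\K'\cdot X_{u'\varpi^{-\ell'}}$ and $\K'\cdot X_{u''\varpi^{-\ell'}}$ as equivalent modulo depth $-\ell'/2$ precisely when $u'$ and $u''$ represent the same element of $\cS_{\lceil\ell'/2\rceil}$. Since $\lceil\ell'/2\rceil\geq \lceil\ell/2\rceil$ whenever $\ell'\geq \ell$, the constraint $u'\equiv u\mod \PP^{\lceil\ell/2\rceil}$ descends unambiguously to $\cS_{\lceil\ell'/2\rceil}$, yielding exactly the index set $T_{u,\ell}$. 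The only delicate point---more bookkeeping than a genuine obstacle---is ensuring that the two uses of Lemma~\ref{countNilpOrbits} are invoked at matching depth pairs and that the refinement from $\cS_{\lceil\ell/2\rceil}$ to $\cS_{\lceil\ell'/2\rceil}$ is compatible with the constraint imposed at depth $-\ell$.
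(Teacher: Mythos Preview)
Your proposal is correct and follows essentially the same approach as the paper: both arguments hinge on Lemma~\ref{countNilpOrbits} (applied with $t-s=\lceil\ell/2\rceil$) to identify the $G'$-orbits meeting the degenerate coset, and then on the $\K'$-orbit decomposition \eqref{E:K'orbits} to enumerate the depths. The only difference is cosmetic: for the forward direction the paper writes down an explicit diagonal element $g=\diag(\alpha\varpi^n,\alpha^{-1}\varpi^{-n})$ conjugating $X_{u'\varpi^{-\ell'}}$ into the coset, whereas you extract this from the cited lemmas; your version is slightly more modular, the paper's slightly more concrete, but the content is the same.
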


\begin{proof}     
Let $(u',\ell')\in T_{u,\ell}$ and set $2n=\ell'-\ell$.  Choose $\alpha \in \RR^\times$ and $\beta\in 1+\PP^{\lceil \ell/2 \rceil}$ such that $u'=u\alpha^2\beta$.   Then with 
    $g=\diag(\alpha\varpi^{n},\alpha^{-1}\varpi^{-n})$ we have 
    $gX_{u'\varpi^{-\ell'}}g^{-1}\in X_{u\varpi^{-\ell}}+\g_{x_0,-\ell/2}$, so the orbit of $X_{u'\varpi^{-\ell'}}$ meets the required degenerate coset.  
    Moreover, choosing $u'$ to range over $\cS_{\lceil \ell'/2\rceil}$ yields that the corresponding $\K'$-orbits are pairwise inequivalent modulo depth $-\ell'/2$. 
    Conversely, by Lemma~\ref{countNilpOrbits}, every $G'$-orbit meeting $X_{u\varpi^{-\ell}}+\g_{x_0,-\ell/2}$ is represented by $X_{u'\varpi^{-\ell}}$ for some $u'\in \cS$ that is equivalent to $u$ modulo $\PP^{\lceil \ell/2\rceil}$, and the $\K'$ orbits of these are exactly those represented by $X_{u'\varpi^{-\ell-2n}}$ for some $n\geq 0$.
\end{proof}

Note that for each fixed $\ell$, the intersection $T_{u,\ell}\cap T_{u',\ell}$ is nonempty if and only if $u\equiv u'\in \cS_{\lceil \ell/2\rceil}.$   
Moreover, for $\ell'>\ell$, $T_{u',\ell'}\subseteq T_{u,\ell}$ if and only if $(u',\ell')\in T_{u,\ell}$ and otherwise they are disjoint. 

\begin{dfn}\label{D:tauzetauell}
Given a
 triple $(\zeta, u,\ell)$, with $\ell\geq 1$, $m=\lceil \ell/2 \rceil$, $u\in \cS_m$ and character $\zeta$ of depth less than $m$, define the representation of $\K'$ associated to $\zeta$ and the degenerate $(-\ell,-\ell/2)$ coset of $X_{u\varpi^{-\ell}}$ at $x_0$ by
$$
\tau_{\zeta, u, \ell} = \bigoplus_{(u',\ell')\in T_{u,\ell}} I(\zeta, u',\ell').
$$
\end{dfn}

By construction, the (infinitely many) summands of $\tau_{\zeta,u,\ell}$ are all pairwise nonisomorphic; their depths are all greater than or equal to $\ell$ and of the same parity as $\ell$.  By the discussion above, for each fixed $\ell$, $\tau_{\zeta, u, \ell}$ and $\tau_{\zeta, u', \ell}$ are disjoint whenever $u$ and $u'$ are distinct in $\cS_m$. 

We now arrive at an analogue for $p=2$ of \cite[Theorem 7.4]{Nevins2024}, one that holds when $\car(F)=0$ or $\car(F)=2$.  

\begin{thm}\label{T:LCE2}
Let $\pi = \pi_i(\sigma)$ be a depth-zero supercuspidal representation of $\Sl(2,F)$ where $\car(F)\in \{0,2\}$ and $p=2$.  Then for any $\ell>0$ such that $\ell \in i+2\mathbb{Z}$ we have
$$
\Res_{\K'}\pi \cong \pi^{\K'_{\ell}} \oplus \bigoplus_{u\in \cS_{\lceil \ell/2 \rceil}} \tau_{\triv,u,\ell}.
$$
The number of distinct summands $\tau_{\triv, u, \ell}$ is $|\mathcal{S}_{\lceil \ell/2\rceil}|$; in particular, it is $2q^e$ if $\ell\geq 4e+1$ but grows to infinity with $\ell$ when $\car(F)=2$ and $e=\infty$.
\end{thm}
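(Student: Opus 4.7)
The plan is to start from Theorem~\ref{T:brulesSL2}, split the summands of $\Res_{\K'}\pi$ by depth, and then repackage the depth-at-least-$\ell$ part into the family $\{\tau_{\triv,u,\ell}\}$ using Lemma~\ref{L:Tuell}. All the substantive ingredients are already in place, so the argument is essentially bookkeeping.

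First, by Theorem~\ref{T:brulesSL2}, $\Res_{\K'}\pi_i(\sigma)$ is a direct sum of the irreducible representations $I(\triv,u',\ell')$, together with $\sigma$ when $i=0$, indexed by pairs $(u',\ell')$ with $\ell'\in i+2\mathbb{Z}_{\geq 0}$, $\ell'>0$, and $u'\in\cS_{\lceil \ell'/2\rceil}$. Each $I(\triv,u',\ell')$ is irreducible of depth exactly $\ell'$, and since $\K'_\ell$ is normal in $\K'$, its $\K'_\ell$-fixed subspace equals the whole representation if $\ell'<\ell$ and is zero otherwise. The hypothesis $\ell\in i+2\mathbb{Z}$ matches the parity of all occurring depths, so the summands with $\ell'<\ell$ add up to $\pi^{\K'_\ell}$, while the residual summands are precisely those with $\ell'\geq \ell$.

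Next, to match the residual sum with $\bigoplus_{u\in\cS_{\lceil \ell/2\rceil}}\tau_{\triv,u,\ell}$, I would appeal directly to Lemma~\ref{L:Tuell}: the set $T_{u,\ell}$ enumerates all pairs $(u',\ell')$ with $\ell'\in \ell+2\mathbb{Z}_{\geq 0}$, $u'\in\cS_{\lceil \ell'/2\rceil}$, and $u'$ in the same class as $u$ in $\cS_{\lceil \ell/2\rceil}$. The natural surjection $\cS_{\lceil \ell'/2\rceil}\to\cS_{\lceil \ell/2\rceil}$, induced by $1+\PP^{\lceil \ell'/2\rceil}\subseteq 1+\PP^{\lceil \ell/2\rceil}$, shows that every such pair has a unique preimage in $\cS_{\lceil \ell/2\rceil}$, so the residual index set partitions as $\bigsqcup_{u\in\cS_{\lceil \ell/2\rceil}}T_{u,\ell}$. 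Reorganizing the direct sum accordingly and invoking Definition~\ref{D:tauzetauell} yields the claimed decomposition.

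Finally, the enumeration comes from Lemma~\ref{valueOfSellK} applied with $k=\lceil \ell/2\rceil$: this gives $|\cS_{\lceil \ell/2\rceil}|=2q^e$ exactly when $\lceil \ell/2\rceil>2e$, i.e., when $\ell\geq 4e+1$; when $\car(F)=2$ we have $e=\infty$, so this case never occurs and instead $|\cS_{\lceil \ell/2\rceil}|=q^{\lfloor \lceil \ell/2\rceil/2\rfloor}\to\infty$ with $\ell$. Distinctness of the $\tau_{\triv,u,\ell}$ as $u$ varies over $\cS_{\lceil \ell/2\rceil}$ is automatic from the disjointness of the index sets $T_{u,\ell}$. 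There is no genuinely hard step; the one subtlety worth watching is the interpretation of the condition in $T_{u,\ell}$ as equivalence in $\cS_{\lceil \ell/2\rceil}$ (i.e., modulo squares and $1+\PP^{\lceil \ell/2\rceil}$) rather than as literal congruence mod $\PP^{\lceil \ell/2\rceil}$, but this convention is already built into the phrasing of Lemma~\ref{L:Tuell}.
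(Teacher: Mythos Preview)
Your proposal is correct and follows essentially the same approach as the paper's proof: both arguments split $\Res_{\K'}\pi$ by depth (using that each $I(\triv,u',\ell')$ has depth exactly $\ell'$), identify the low-depth part with $\pi^{\K'_\ell}$, and then partition the remaining summands among the $\tau_{\triv,u,\ell}$ via the disjoint index sets $T_{u,\ell}$. The paper's version phrases this through Corollaries~\ref{C:Izetauell} and~\ref{C:Sigmaell} rather than Theorem~\ref{T:brulesSL2}, but since the latter is an immediate consequence of the former, the content is the same; if anything, your version makes the bookkeeping (in particular the surjection $\cS_{\lceil \ell'/2\rceil}\to\cS_{\lceil \ell/2\rceil}$ that underlies the partition) more explicit.
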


\begin{proof}
    By Corollary~\ref{C:Izetauell}, we find that for all $\ell'\geq \ell$ with $\ell'-\ell\in 2\Z$,  $\sigma(\ell')$ intertwines with $\tau_{\triv, u',\ell'}$ for each $u\in \mathcal{S}_{\lceil \ell'/2\rceil}$ and by Corollary~\ref{C:Sigmaell} all irreducible components of $\sigma(\ell')$ occur as a depth $\ell'$ component of some $\tau_{\triv, u, \ell}$.  The result follows since the $\tau_{\triv, u, \ell}$ are disjoint.
\end{proof}

The expansion in Theorem~\ref{T:LCE2} captures the local triviality of representations near the identity, even when $\car(F)=2$.  In fact, for any $\ell>0$ we recover a version of Theorem~\ref{T:LCE} that is well-defined, even in characteristic two.   Namely, since $\pi^{\K'_\ell}$ restricts trivially to $\K'_{\ell}$, we recover
\begin{equation}\label{LCE char 2}
    \Res_{\K_\ell'}\pi \cong 
    \dim(\pi^{\K_{\ell}'}) \cdot \triv \oplus \bigoplus_{u\in S_{\lceil \ell/2\rceil}}\Res_{\K'_{\ell}}\tau_{\triv, u, \ell}. 
\end{equation}
 This gives a family of distinct decompositions indexed by the depth $\ell$ of the restriction, in which the components correspond to (finitely many!) equivalence classes of orbits of $\WF(\pi)$, where this equivalence is characterized by the $\K'$-orbits of its $(-\ell,-\ell/2)$ degenerate cosets at $x_0$.

\subsection{Explicit results for \texorpdfstring{$\mathbb{Q}_2$}{Q2}}\label{SS:8.4Q2}

Let us apply our results to the special but interesting case of $\Sl(2,\mathbb{Q}_2)$.  In this case, $e=1$, $\varpi=2$, $q=2$ and $|\cS|=4$.
Thus by Theorem~\ref{dimHomeSigmaEll}, the double cosets supporting intertwining operators take the form
\begin{equation}\label{E:dcstQ2}
\dcst_{\ell, sup}
=\{I\} 
\cup \{g(\ell-1,1) \mid \text{if $\ell\geq 3$}\} \cup \{g(\ell-2,1), g(\ell-2, 1+\varpi) \mid \text{if $\ell \geq 5$}\}
\end{equation}
and each intertwining number is $1$.  By Corollary~\ref{C:Sigmaell}, and the fact that the decomposition is multiplicity-free, we have that each $\sigma(\ell)$ correspondingly decomposes as a direct sum of $\Sigma(\ell)\in \{1,2,4\}$ irreducible subrepresentations, whose degrees are given in Lemma~\ref{degInducedofNilp}.  We summarize this in Table~\ref{Table:Q2reps}. 

\begin{table}[ht]
    \begin{tabular}{|ccc|ccc|}
\hline    \multicolumn{3}{|c}{$\pi_0 = \cInd_{G'_{x_0}}^{G'}\sigma$} & \multicolumn{3}{|c|}{$\pi_1 = \cInd_{G'_{x_1}}^{G'}\prescript{g_1}{}{\sigma}$}\\
    depth & \# components & degree & depth & \# components & degree\\ \hline
    0 & 1 & 1 &&&\\
    2 & 1 & 6 & 1 & 1 & 3\\
    4 & 2 & 12 & 3 & 2 & 6\\
    $6=4e+2$ & 4 & 24 & $5=4e+1$ & 4 & 12\\
    $2k, k\geq 4$ & 4 & $3\cdot 2^{2k-3}$ & $2k+1, k\geq 3$ & 4 & $3\cdot 2^{2k-2}$\\ \hline
    \end{tabular}
    \caption{\label{Table:Q2reps}The number and degree of irreducible representations of $\Res_{\K'}\pi$, for $\pi$ as supercuspidal representation of $\Sl(2,\mathbb{Q}_2)$. Note that the depth-zero component is $\sigma$, a type for $\pi_0$.}
\end{table}

\begin{rem}
In contrast, when $p$ is odd, every positive-depth Mackey component of $\Res_{\K'}\pi$, for an irreducible Deligne--Lusztig supercuspidal representation of $\Sl(2,F)$, decomposes as a direct sum of exactly two irreducible subrepresentations of degree $\frac12(q^2-1)q^{\ell-1}$ \cite[Theorem 5.3]{Nev}.
\end{rem}

We may also conclude that when $\ell \geq 5=4e+1$, we have
$\End_{\K'}(\sigma(\ell)) \cong \mathbb{C}[\mathbb{Z}/4\mathbb{Z}]$ for all $\ell\geq 5=4e+1$, as follows.  By Proposition~\ref{actionOnEndSpaceQ2}, the operator $\mathcal{F}_{g(\ell-2,\alpha)}$ with $\alpha \in \{1,1+\varpi\}$ acts as $\beta \mapsto \beta + \alpha\varpi^{\ell-2}$ modulo $\PP^\ell$.  Since $\mathbb{Z}\alpha\equiv \{0,\alpha, 2\alpha, 3\alpha\} \mod \PP^2$, this operator has order $4$.  The remaining operators  have order $1$ or $2$.

In this case, Theorem~\ref{T:LCE} expresses $\pi_i$ as a linear combination of four of the eight representations associated to the nilpotent orbits.  For example, suppose $\WF(\pi)=\{\cO_u\mid u\in \cS\}$.  Then for each orbit we have
$$
\tau(\cO_u,\triv) = I(\triv,u,2) \oplus I(\triv,u,4) \oplus I(\triv,u,6) \oplus \cdots
$$
where most of these components are distinct, except that  for all $u$ we have $I(\triv,u,2)\cong I(\triv,1,2)$  and $I(\triv,u,4) \cong I(\triv, u+\varpi^2, 4)$.
Then since by Table~\ref{Table:Q2reps},  $\dim(\pi_0^{\K'_5})=31$ and $\dim(I(\triv,u,2))+\dim(I(\triv,u,4))=6+12=18$, we have by \eqref{E:formulanpi} that
\begin{equation}\label{E:q2calcnpi}
\Res_{\K_{5}'}\pi_0 = -41\cdot  \triv + \sum_{\cO\in \WF(\pi_0)}\Res_{\K'_5}\tau(\cO).
\end{equation}
Similarly, $\Res_{\K_5'}(\pi_1)= -21\cdot \triv + 
\sum_{u\in \WF(\pi_1)} \Res_{\K'_5}\tau(\cO)$.

On the other hand, Theorem~\ref{T:LCE2} allows us to write variously, for example,
$$
\Res_{\K'}\pi_0 \cong \pi^{\K'_2} \oplus \tau_{\triv,1,2} \cong \pi^{\K'_4} \oplus \tau_{\triv,1,4}\oplus \tau_{\triv,1+\varpi,4}. 
$$
When instead  $F=\Fpt$, then this pattern continues indefinitely.

\providecommand{\bysame}{\leavevmode\hbox to3em{\hrulefill}\thinspace}
\providecommand{\MR}{\relax\ifhmode\unskip\space\fi MR }
\providecommand{\MRhref}[2]{%
  \href{http://www.ams.org/mathscinet-getitem?mr=#1}{#2}
}
\providecommand{\href}[2]{#2}

\end{document}